\newcommand{\Z}{\mathbb{Z}}
\newcommand{\N}{{\mathbb N}}
\newcommand{\E}[1]{{\mathbf E}\left[#1\right]}
\newcommand{\e}{{\mathbf E}}
\newcommand{\p}[1]{{\mathbf P}\left\{#1\right\}}
\newcommand{\I}[1]{{\mathbf 1}_{[#1]}}
\newcommand{\set}[1]{\left\{ #1 \right\}}
\newcommand{\phat}[1]{\ensuremath{\hat{\mathbf P}}\left\{#1\right\}}
\newcommand{\Ehat}[1]{\ensuremath{\hat{\mathbf E}}\left[#1\right]}
 \newcommand{\bag}{\begin{align}}
\newcommand{\bags}{\begin{align*}}
\newcommand{\eag}{\end{align*}}
\newcommand{\eags}{\end{align*}}
\newtheorem{thm}{Theorem}[section]
\newtheorem{lem}[thm]{Lemma}
\newtheorem{prop}[thm]{Proposition}
\newtheorem{cor}[thm]{Corollary}
\newcommand\cC{\mathcal C}
\newcommand\cF{\mathcal F}
\newcommand\cG{\mathcal G}
\newcommand\cM{\mathcal M}
\newcommand\cT{{\mathcal T}}
\newcommand{\refT}[1]{Theorem~\ref{#1}}
\newcommand{\refL}[1]{Lemma~\ref{#1}}
\newcommand{\pran}[1]{\left(#1\right)}
\definecolor{clou}{rgb}{0.5,0.125,0.3125}
\newcommand{\eps}{\epsilon}
\newcommand\urladdrx[1]{{\urladdr{\def~{{\tiny$\sim$}}#1}}}
\xdef\oclock{\the\count1:0\the\count255}
\xdef\oclock{\the\count1:\the\count255}\fi
\DeclareMathOperator{\PGW}{PGW}
\DeclareMathOperator{\pgw}{PGW}
\newcommand{\eqdist}{\ensuremath{\stackrel{\mathrm{d}}{=}}}
\newcommand{\convdist}{\ensuremath{\stackrel{\mathrm{d}}{\to}}}
\definecolor{mygrey}{gray}{0.97}
\newcommand{\floor}[1]{\ensuremath{\lfloor #1 \rfloor}}
\newcommand{\ceil}[1]{\ensuremath{\lceil #1 \rceil}}
\newcommand{\diam}{\ensuremath{\mathrm{diam}}}
\newcommand{\leas}{\ensuremath{\le_{\mathrm{a.s.}}}}
\newcommand{\geas}{\ensuremath{\ge_{\mathrm{a.s.}}}}
\newcounter{CC}
\newcommand{\CC}{\stepcounter{CC}\CCx} 
\newcommand{\CCx}{C_{\arabic{CC}}}     
\newcommand{\CCdef}[1]{\xdef#1{\CCx}}     
\newcounter{cc}
\newcommand{\cc}{\stepcounter{cc}\ccx} 
\newcommand{\ccx}{c_{\arabic{cc}}}     
\newcommand{\ccdef}[1]{\xdef#1{\ccx}}     
\newcommand{\convas}{\ensuremath{\stackrel{\mathrm{a.s.}}{\rightarrow}}}
\renewcommand{\d}{\ensuremath{\mathrm{d}}}
\newcommand{\pst}{\ensuremath{\preceq_{\mathrm{st}}}}
\newcommand{\sust}{\ensuremath{\succeq_ {\mathrm{st}}}}
\newcommand{\zpu}{\ensuremath{z\mbox{-{\it Prim}}(u)}}
\newcommand{\Cexp}[2]{\mathbf{E}\set{\left. #1 \; \right| \; #2}}
\numberwithin{equation}{section}
\newcommand{\rG}{\ensuremath{\mathrm{G}}}
\newcommand{\rT}{\ensuremath{\mathrm{T}}}
\newcommand{\rK}{\ensuremath{\mathrm{K}}}
\newcommand{\rU}{\ensuremath{\mathrm{U}}}
\newcommand{\rM}{\ensuremath{\mathrm{M}}}
\newcommand{\rY}{\ensuremath{\mathrm{Y}}}
\begin{document}

\title[The local weak limit of the MST of $K_n$]{The local weak limit of the minimum spanning tree of the complete graph}

\author{L. Addario-Berry}
\address{Department of Mathematics and Statistics, McGill University, 805 Sherbrooke Street West, 
		Montr\'eal, Qu\'ebec, H3A 2K6, Canada}
\email{louigi@math.mcgill.ca}
\date{January 8, 2013} 
\urladdrx{http://www.math.mcgill.ca/~louigi/}

\subjclass[2010]{60C05} 

\begin{abstract} 
Assign i.i.d.\ standard exponential edge weights to the edges of the complete graph $K_n$, and let $M_n$ be the resulting minimum spanning tree. We show that $M_n$ converges in the local weak sense (also called Aldous-Steele or Benjamini-Schramm convergence), to a random infinite tree $M$. The tree $M$ may be viewed as the component containing the root in the wired minimum spanning forest of the Poisson-weighted infinite tree (PWIT). We describe a Markov process construction of $M$ starting from the invasion percolation cluster on the PWIT. We then show that $M$ has cubic volume growth, up to lower order fluctuations for which we provide explicit bounds. Our volume growth estimates confirm recent predictions from the physics literature \cite{jackson2010theory}, and contrast with the behaviour of invasion percolation on the PWIT \cite{addario12prim} and on regular trees \cite{angel08invasion}, which exhibit quadratic volume growth. 
\end{abstract} 

\maketitle

\vspace{-0.5cm}
\tableofcontents

\section{{\bf Introduction}}\label{sec:intro} 

\lettrine[lines=2]{A}{} very recent preprint, written by the author of this paper and Broutin, Goldschmidt and Miermont \cite{us2013}, identified the Gromov-Hausdorff-Prokhorov scaling limit of the minimum spanning tree of the complete graph and described some of its basic properties. The current paper complements the work of \cite{us2013} by identifying the {\em un-rescaled} or {\em local weak} limit of the minimum spanning tree of the complete graph. Both the work of \cite{us2013} and the current work open avenues for further research, into developing a more detailed understanding of the properties of the limiting objects. We highlight some specific questions of interest later in the introduction; for the moment, we jump right to the definitions required for a precise statement of our results. 

\subsection*{Rooted weighted graphs}
\nomenclature[RWG]{RWG}{Rooted weighted graph}
A {\em rooted weighted graph} (RWG) is a triple $\rG=(G,\rho,w)$, where $G=(V(G),E(G))$ is a connected simple graph with countable vertex degrees, $\rho \in V(G)$ is the {\em root vertex}, and $w:E(G) \to [0,\infty)$ assigns weights to the edges of $G$. By convention, we view unweighted rooted graphs with as RWGs, by letting all edges have weight $1$. 

If $\rG=(G,\rho,w)$ is an RWG and $G'$ is a connected subgraph of $G$ containing $\rho$ (i.e., $G'$ is a graph with $\rho \in V(G') \subset V(G)$ and $E(G') \subset E(G)$), then $(G',\rho,w|_{E(G')})$ is another RWG, and is a {\em sub-RWG} of $\rG$. We sometimes write $(G',\rho,w)$ instead of $(G',\rho,w|_{E(G')})$ for succinctness, when doing so is unlikely to cause confusion. Given $S \subset V(G)$ with $\rho \in S$, we write $\rG[S]$ for the sub-RWG of $\rG$ induced by $S$. 

For an RWG $\rG=(G,\rho,w)$, the {\em graph distance} $d_{\rG}:V(G) \times V(G) \to [0,\infty]$ is given by 
\nomenclature[D]{$d_{\rG}$}{Unweighted graph distance in $\rG$}
\[
d_{\rG}(u,v) = \inf\left\{ |E(P)|~:~ P~\mbox{a path from $u$ to $v$ in $G$}\right\}\, ,
\]
for $u,v \in V(G)$. (Elsewhere we let $\inf \emptyset = \infty$, but note that here the infimum is non-empty as $G$ is connected.) We similarly define the {\em weighted} graph distance by 
\nomenclature[D]{$d'_{\rG}$}{Weighted graph distance in $\rG$}
\[
d'_{\rG}(u,v) = \inf\left\{ \sum_{e \in E(P)} w(e)~:~ P~\mbox{a path from $u$ to $v$ in $G$}\right\}\, ,
\]
For $v \in V(G)$ and $x > 0$, we write 
$B_{\rG}(v,x) = \{u \in V(G):d_{\rG}(u,v) \le x\}$ and 
$B'_{\rG}(v,x) = \{u \in V(G):d'_{\rG}(u,v) \le x\}$. 
\nomenclature[BGvx]{$B_{\rG}(v,x)$}{Ball in $\rG$ for the distance $d_{\rG}$.}
\nomenclature[Bgvx]{$B'_{\rG}(v,x)$}{Ball in $\rG$ for the distance $d_{\rG}'$.}
We say $\rG$ is {\em locally finite} if $|B'_{\rG}(r,x)| < \infty$ for all $x\ge 0$. 

\subsection*{Prim's algorithm/invasion percolation}
We briefly recall the definition of {\em Prim's algorithm} (also called {\em invasion percolation}) on an RWG. Let $\rG=(G,\rho,w)$ be a locally finite RWG with all edge weights distinct. 

\medskip
\fcolorbox{black}{mygrey}{%
\begin{minipage}{0.95\textwidth}%
\noindent {\it Prim's algorithm on $\rG$.}\\ 
Let $E_1=E_1(\rG)=\emptyset$ and let $v_1=v_1(\rG)=\rho$. \\
For $1 \le i < |V(\rG)|$:\\
\hspace*{0.5cm}$\star$ let $e_i=e_i(\rG) \in E(G)$ minimize 
$\{w(e): e=uv, u \in \{v_1,\ldots,v_i\},v \not\in\{v_1,\ldots,v_i\}\}$; \\
\hspace*{0.5cm}$\star$ let $v_{i+1}=v_{i+1}(\rG) = v$ and let $E_{i+1}=E_{i+1}(\rG) = E_i \cup \{e_{i+1}\}$. 
\end{minipage}
}
\nomenclature[Vig]{$v_i(\rG)$}{The $i$'th vertex added by Prim's algorithm on $\rG$.}
\nomenclature[Eig]{$e_i(\rG)$}{The $i$'th edge added by Prim's algorithm on $\rG$.}

\medskip
If $\rG$ is finite then, writing $n=|V(G)|$, the graph $(V(G),E_n)$ is the unique 
tree $T=(V(T),E(T))$ with $V(T)=V(G)$ and $E(T) \subset E(G)$ minimizing $\sum_{e \in E(T)} w(e)$; in other words, it is the {\em minimum weight spanning tree} (MST) of $\rG$. 
If $\rG$ is infinite, however, $\{v_i, i \ge 1\}$ may be a strict subset of $V(G)$, in which case the tree constructed by Prim's algorithm is not a {\em spanning} tree; we call it the {\em invasion percolation cluster} of $\rG$. 

There is a large corpus on minimum spanning trees/invasion percolation clusters of random graphs, with contributions from both the combinatorics and probability communities (we refer the interested reader to \cite{us2013} for a detailed bibliography). In many settings, minimum spanning trees and forests turn out to be intimately linked to an associated percolation process. This can serve as both a motivation and a warning (to justify the latter epithet, we record Newman's observation \cite{newman97topics} that, weighting the edges of $\Z^d$ by iid Uniform$[0,1]$ edge weights, the invasion percolation cluster has asymptotically zero density if and only if the percolation probability $\theta_{p_c}(\Z^d)=0$). 

\subsection*{Local weak limits}

Our current aim is to study the local structure of the MST of the complete graph. More precisely, for $n \ge 1$ we write $K_n$ for the graph with vertices $[n]=\{1,\ldots,n\}$ and an edge between each pair of vertices. Let $W_n=\{W_n(e):e \in E(K_n)\}$ be iid Exponential$(n-1)$ edge weights, and write $\rK_n = (K_n,1,W_n)$. 
\nomenclature[Kn]{$\mathrm{K}_n$}{The complete graph on $[n]$ with root $1$ and with Exponential$(n-1)$ edge weights.}
Then let 
$M_n = ([n],E_n(\rK_n))$ be the minimum spanning tree of $\rK_n$, and let 
$\rM_n = (M_n,1,W_n)$ be the RWG formed from $M_n$ by rooting at the vertex~$1$. 
\nomenclature[Mn]{$\rM_n$}{The minimum spanning tree of $\rK_n$; $\rM_n=(M_n,1,W_n)$.}
The primary contributions of this paper are to identify the local weak limit of $\rM_n$ and to prove volume growth bounds for the limiting RWG. 

\medskip 
In order to precisely state our results, we first recall the notion of local weak convergence. For unweighted graphs, such convergence was introduced by 
\citet{bs}. The formulation we use here is essentially that of \citet{aldous2004omp}. 
For $\eps \ge 0$, we say two RWGs $\rG=(G,\rho,w)$ and $\rG'=(G',\rho',w')$ are {\em $\eps$-isomorphic}, and write $\rG\simeq_{\eps} \rG'$, if there exists a bijection $\phi:V(G) \to V(G')$ that induces a graph isomorphism of $G$ and $G'$, such that $\phi(\rho)=\rho'$ and such that for all edges $uv \in e(G)$, $|w(uv)- w'(\phi(u)\phi(v))| \le \eps$. If $\eps=0$ we say that $\rG$ and $\rG'$ are isomorphic and write $\rG\simeq \rG'$. 

We may define a pseudometric $d_0$ on the set of locally finite RWGs as follows. 
Given RWGs $\rG_1=(G_1,\rho_1,w_1)$ and $\rG_2=(G_2,\rho_2,w_2)$, we let
\[
d_0(\rG_1,\rG_2) = \frac{1}{2^k}, 
\]
where
\[
n = \sup\left\{k \in \N: \exists \delta \in [-k^{-1},k^{-1}]~,~\rG_1'(k)\simeq_{k^{-1}} \rG_2'(k+\delta)\right\}\, .
\]
Note that $d_0(\rG_1,\rG_2)=0$ precisely if $\rG\simeq \rG'$. 
Now let $d_{\mathrm{LWC}}$ be the push-forward of $d_0$ to the set $\cG^*$ of isomorphism-equivalence classes of locally finite RWGs.\footnote{When referring to an equivalence class $[(G,\rho,w)] \in \cG^*$, we will typically instead write $(G,\rho,w)$; this slight 
abuse of notation should never cause confusion.} 
It is straightforward to verify that $(\cG^*,d_{\mathrm{LWC}})$ forms a complete, separable metric space, and we refer to convergence in distribution in this metric space as local weak convergence, or sometimes simply as weak convergence. 

\subsection{Statement of results} 
As is the case for many random combinatorial optimization problems on the complete graph, the local weak limit of $\rM_n$ is 
naturally described in terms of the {\em Poisson-weighted infinite tree} (PWIT) of \citet{aldous2004omp}. The PWIT is the following random RWG. 
Let $U$ be the Ulam-Harris tree; 
\nomenclature[U]{$U$}{The Ulam-Harris Tree}
\nomenclature[Uow]{$(U,\emptyset,W)$}{The Poisson-weighted infinite tree}
this is the tree with vertex set $\bigcup_{n \ge 0} \N^n$ (write $\N^0=\{\emptyset\}$), and 
for each $k \ge 1$ and each vertex $v=(n_1,\ldots,n_k) \in \N^k$, an edge between $v$ and its parent $(n_1,\ldots,n_{k-1})$. (If $k=1$ then 
the parent of $v$ is the root vertex $\emptyset$.) Independently for each $v = (n_1,\ldots,n_k) \in V$, let $(w_i,i \ge 1)$ be the atoms of a
homogenous rate one Poisson process on $[0,\infty)$, and for each $i \ge 1$ give the edge from $v$ to its child $v'=(n_1,\ldots,n_k,i)$ the weight $W(\{v,v'\})=w_i$. Writing $W=\{W(e),e \in E\}$, where $E$ is the edge set of $U$, the Poisson-weighted infinite tree is (a random RWG with the distribution of) the triple $\rU=(U,\emptyset,W)$. 

For any vertex $u \in V(U)$, write $T^{(u)}$ for the invasion percolation cluster of $\rU^{(u)}=(U,u,W)$. 
\nomenclature[Tu]{$T^{(u)}$}{The invasion percolation cluster of $(U,u,W)$; $T=T^{(\emptyset)}$.}
\nomenclature[M]{$\rM$}{Component of the wired MSF on $\rU$ containing the root $\emptyset$; $rM=(M,\emptyset,W)$.}
The tree $T^{(\emptyset)}$ will play a particularly important role, so we write $T=T^{(\emptyset)}$ and 
write $\rT=(T,\emptyset,W)$. 
Temporarily let $M_0$ be the union of all the trees $(T^{(u)},u \in V(U))$; 
$M_0$ should be thought of as the minimum spanning {\em forest} of $\rU$ with wired boundary conditions. Next, let $M$ be the connected component of $M_0$ containing the root $\emptyset$. In this paper we establish that $\rM=(M,\emptyset,W)$ is the local weak limit of the minimum spanning tree $\rM_n=(M_n,1,W_n)$. 
\begin{thm}\label{thm:main}
As $n \to \infty$, we have $(M_n,1,W_n)\convdist(M,\emptyset,W)$, in the local weak sense. 
\end{thm}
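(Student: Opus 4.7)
The plan is to combine two ingredients: the classical local weak convergence $\rK_n \convdist \rU$, and a ``local certification'' of MST membership via the cycle rule. For the first, the $n-1$ iid $\mathrm{Exp}(n-1)$ edge weights incident to any vertex of $\rK_n$ have order statistics converging, as a point process on $[0,\infty)$, to a rate-one Poisson process. A breadth-first exploration argument---using that in $o(\sqrt{n})$ steps no vertex is revisited with high probability---lifts this to the assertion $\rK_n \convdist \rU$, and can be upgraded to a joint coupling on a single probability space under which, for any fixed $R > 0$, the weighted $R$-balls of $\rK_n$ and $\rU$ around their respective roots agree (up to $\eps$-isomorphism) with probability tending to $1$.

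For the second ingredient, recall the cycle rule: an edge $e = uv$ with weight $w$ belongs to the MST of $\rK_n$ (respectively, the wired MSF of $\rU$) if and only if in the subgraph of edges of weight strictly less than $w$, the vertices $u,v$ lie in distinct (possibly wired) clusters. For $w$ bounded away from the critical value $1$, the relevant clusters are almost surely finite: on $\rU$, the subgraph of edges of weight $< w$ is a subcritical Galton--Watson tree with Poisson$(w)$ offspring; on $\rK_n$ the corresponding bond percolation is Erd\H{o}s--R\'enyi with expected degree $w$, whose cluster sizes satisfy uniform moment bounds. This makes the cycle rule \emph{locally certifiable}: whether $e$ lies in the MST/wired MSF can be decided from a finite neighborhood of $\{u,v\}$ in the weighted graph.

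To combine the two ingredients, fix $R > 0$ and note that $B'_\rM(\emptyset, R)$ is built by taking the portion of the invasion cluster $T$ inside the ball, together with the portions of the invasion clusters $T^{(u)}$ for the finitely many nearby vertices $u$ whose invasions enter the ball. Since invasion percolation on $\rU$ has almost-surely-finite weighted expansion, this subgraph is contained in a finite (random) region of $\rU$. Carrying out the same exploration on $\rK_n$ under the coupling, and using the local cycle-rule certification to settle each candidate edge, shows that $B'_{\rM_n}(1,R)$ and $B'_\rM(\emptyset, R)$ are isomorphic as RWGs (up to $\eps$-isomorphism) with probability tending to $1$. This gives the desired convergence of weighted $R$-balls for every $R$, hence local weak convergence.

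The main obstacle is the non-locality of the cycle rule for edges whose weight is close to $1$: certifying that such a borderline edge is absent from the MST could, in principle, require producing an arbitrarily long small-weight path between its endpoints. The way forward is to exploit that, almost surely, no edge incident to a fixed vertex of $\rM$ has weight arbitrarily close to $1$ from above---a consequence of the structure of invasion percolation on $\rU$, whose added edge-weights converge to $1$ only asymptotically. Combined with a matching uniform-in-$n$ tail bound on subcritical cluster sizes in $\rK_n$, this allows the cycle-rule certification to be performed within a finite neighborhood with probability tending to $1$, and handling these quantitative estimates is where the bulk of the technical work lies.
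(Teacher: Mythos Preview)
Your approach---couple $\rK_n$ to the PWIT and locally certify MST/wired-MSF membership via the cycle rule---is a natural strategy and genuinely different from what the paper does. But there is a real gap in your second paragraph: the claim that for $w$ bounded away from $1$ ``the relevant clusters are almost surely finite'' with subcritical Galton--Watson law is only correct for $w<1$. The wired MSF $\rM$ contains many edges of weight $>1$ near the root---all forward maximal edges of $\rT$, and every attachment edge in the aggregation process of Section~\ref{sec:pwag}. For such an edge $e=\{p(v),v\}$ of weight $w>1$, the wired cycle rule asks whether the subtree $U_v(w)$ is \emph{infinite}; the corresponding question on $\rK_n$ is whether both endpoints lie in the giant component of $K_n^w$. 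Neither is decidable from a bounded neighborhood. You could try to approximately certify these by checking survival to a large depth $L$ (since for $w>1+\delta$ a supercritical $\pgw(w)$ tree either dies with $\pgw(w^*)$ tails or grows exponentially), but this is exactly the near-critical difficulty you flag, now on both sides of $1$, and you have not supplied the uniform-in-$n$ estimates needed to carry it out.

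The paper avoids local certification altogether. It first runs Prim's algorithm on $\rK_n$ for $o(n)$ steps and shows (Proposition~\ref{prop:abgw}) that the resulting subtree converges locally to $\rT$. For each fixed threshold $z>1$ it then identifies a stopping time $d_n(z)$ at which the Prim tree already lies inside the giant component of $K_n^z$, and from there runs Kruskal's algorithm; since only subcritical-at-level-$z$ pieces remain to be attached, this second phase converges to the Poisson Galton--Watson aggregation process $(\rM(\lambda),\lambda\ge z)$ (Lemmas~\ref{lem:tnztotn} and~\ref{lem:tntot}). Letting $z\downarrow 1$ recovers the full limit $\rM$. In effect the paper absorbs the supercritical decision (``am I in the giant?'') into the construction of $\rT$ itself, leaving only subcritical attachments to analyze; your approach would have to confront that decision edge by edge.
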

Theorem~\ref{thm:main} in particular extends a result of Aldous (\cite{aldous90randomtree}, Theorem 1). There is a.s\ a unique edge $e \in E(M)$ incident to $\emptyset$ whose removal separates $\emptyset$ from $\infty$ 
(this follows from Corollary~\ref{cor:nvdist}, below, which states that $M$ is one-ended). 
Write $F$ for the component containing $\emptyset$ after $e$ is removed. Correspondingly, let $e_n$ be the edge of $M_n$ whose removal minimizes the size of the resulting component containing $1$ (with ties broken by choosing the component containing the smallest label, say), and write $F_n$ for this component. Aldous proved that $(F_n,1,W_n)$ converges in distribution in the local weak sense, to an almost surely finite limit, which in our setting has the distribution of $(F,\emptyset,W)$. The convergence of $(F_n,1)$ to $(F,\emptyset)$ follows immediately from Theorem~\ref{thm:main}. 

The proof of Theorem~\ref{thm:main} will also provide an explicit characterization of the limit, which is straightforward enough to yield an exact, though somewhat complicated, description of the distribution of the degree of the root vertex $\emptyset$ in $\rM$. This description is provided in Section~\ref{sec:consequences}. 

Our second main result is to show that the limit $\rM$ has roughly cubic volume growth. 
\begin{thm}\label{thm:volume}
There is $C > 0$ such that almost surely 
\[
\limsup_{r \to \infty} \frac{|B_{\rM}(\emptyset,r)|}{r^3 e^{C\log^{1/2} r}} < \infty\, ,
\]
and almost surely
\[
\liminf_{r \to \infty} \frac{|B_{\rM}(\emptyset,r)|}{r^3/ \log^{22}r} > 0\, .
\]
\end{thm}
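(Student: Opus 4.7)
My plan is to decompose the limit tree $M$ as the invasion percolation cluster $T$ together with finite subtrees hanging off each vertex of $T$, and to bound each contribution separately. The starting point is the quadratic volume growth result for $T$ from \cite{addario12prim}, which gives $|B_\rT(\emptyset, r)|$ of order $r^2$ up to polylogarithmic corrections. The extra factor of $r$ appearing in the volume of $M$ should come from the sizes of the hanging subtrees.

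More precisely, for each $u \in T$, let $H_u$ denote the set of vertices $v \in M$ whose unique $M$-path to $\emptyset$ first meets $T$ at $u$, so that each $H_u$ is a finite rooted subtree attached to $T$ at $u$, and
\[
|B_\rM(\emptyset, r)| = \sum_{u \in T} |\{v \in H_u : d_\rM(\emptyset,v) \le r\}|.
\]
A vertex $v \in H_u \setminus \{u\}$ lies in some subtree of $U$ hanging off a non-$T$-neighbor of $u$ and satisfies $u \in T^{(v)}$. This forces the weights along the $v$-to-$u$ path in $U$ to lie below thresholds determined by the invasion history at $u$; consequently each $H_u$ is distributed like a subcritical Poisson branching process whose effective offspring mean approaches $1$ as the relevant invasion weights at $u$ approach the critical value $1$. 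The mean and exponential-tail scale of $|H_u|$ should grow polynomially in $d_\rT(\emptyset, u)$, contributing the extra factor of $r$ needed for cubic overall growth.

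For the upper bound, I would establish a uniform stretched-exponential tail estimate for $|H_u|$ in terms of $d_\rT(\emptyset, u)$ and then apply a union bound over the $\approx r^2$ vertices in $B_\rT(\emptyset, r)$. The $e^{C\log^{1/2} r}$ correction should emerge naturally as the ratio between the typical and maximum values of a stretched-exponential random variable taken over polynomially many samples. For the lower bound, I would use a second-moment argument to show that a positive fraction of vertices $u \in T$ at $d_\rT$-distance between $r/2$ and $r$ satisfy $|H_u| \ge r/\log^a r$ for some explicit $a$; combined with the $r^2/\log^b r$ lower bound on $|B_\rT(\emptyset, r)|$, this yields the claimed $r^3/\log^{22} r$, with the polylog exponent absorbing cumulative losses from Cauchy--Schwarz and from polylog slack in the backbone volume estimate.

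The principal obstacle is disentangling the correlations between $T$ and the family $(H_u)_{u \in T}$: both depend on the same Poisson weights near each vertex of $T$. I plan to address this by revealing the Poisson processes in a specific order, first running invasion from $\emptyset$ to uncover $T$ along with all weights used in its construction, and only then exposing the additional weights that determine each $H_u$. Conditionally on this backbone data, the subtrees $(H_u)_{u \in T}$ should become independent processes whose laws depend only on the local invasion history at each $u$, which makes the moment estimates tractable. Pushing these estimates through so as to yield the precise logarithmic corrections in the statement will require quantitative refinements of the invasion-percolation estimates from \cite{addario12prim} applied to both the backbone and the subtree sides of the decomposition.
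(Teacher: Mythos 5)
Your decomposition of $M$ into the backbone $T$ plus hanging subtrees ($H_u=\cM_u$ in the paper's notation), with conditional independence given the backbone data, is indeed the paper's starting point for the upper bound, but both halves of your plan have problems. For the upper bound, conditional on the activation weight $x(u)=\lambda$, the size $|\cM_u|$ is governed by $\lambda$ (not by $d_{\rT}(\emptyset,u)$ directly) and is heavy-tailed rather than stretched-exponential at the relevant scale: attachments are built from $\pgw(t^*)$ trees, so with probability bounded away from zero nothing attaches at all, while sizes of order $(\lambda-1)^{-2}$ occur with probability of order $\lambda-1$. A union bound on per-vertex tails therefore does not control the sum, and the factor $e^{C\log^{1/2}r}$ does not arise as a max-over-samples fluctuation: in the paper it comes from the conditional \emph{mean} itself, computed exactly as the sum of iterated integrals $\sum_{k\ge 0}n_k(\lambda,\infty)$ (Propositions~\ref{prop:nvdist} and~\ref{prop:nkgrowthbd}), which is of order $e^{C\log^{1/2}(1/(\lambda-1))}/(\lambda-1)$; the a.s.\ upper bound then follows by a first-moment bound, Markov's inequality and Borel--Cantelli along dyadic radii, with Lemmas~\ref{lem:zisizes} and~\ref{lem:pond-dist-lower} controlling which values of $x(u)$ are relevant at radius $r$.

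The genuine gap is in your lower bound. You propose to show that a positive fraction of backbone vertices at distance $\asymp r$ satisfy $|H_u|\ge r/\log^a r$; that statement is false. At the relevant scale $x(u)-1\asymp 1/r$ the law of $|\cM_u|$ has mean of order $r$ but typical value $O(1)$, and $\mathbf{P}\{|\cM_u|\ge r\}\asymp r^{-1/2}\to 0$; the cubic volume comes from the $\asymp 1/r$ fraction of such vertices whose hanging trees have size $\asymp r^2$. This non-concentration is precisely the obstacle the paper flags in its heuristic discussion (the sizes are asymptotically Borel-distributed with second moment $\asymp\gamma^{-3}$), and it breaks a vertex-by-vertex second-moment argument of the type you describe. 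The paper instead applies the second-moment method to the \emph{total} number of vertices lying in hanging trees whose size is in $[\gamma^{-2}/C,C\gamma^{-2}]$ \emph{and} whose diameter is at most $C/\gamma$ --- the diameter constraint, absent from your sketch, is essential to guarantee these vertices actually lie in $B_{\rM}(\emptyset,r)$ --- attached to a carefully chosen set of recently added Prim vertices in the finite-$n$ model (Proposition~\ref{prop:lowerboundkey}), and then transfers the bound to $\rM$ via Theorem~\ref{thm:main}. The author explicitly notes being unable to run the lower bound directly in the limit, because the expectation formula gives only total sizes of the $\cM_u$ and not volume growth within them; your purely limiting plan would have to supply exactly that missing ingredient.
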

This volume growth agrees with recent predictions from the physics literature \cite{jackson2010theory}, and, as shown by Theorem~\ref{thm:tvolume}, stands in contrast to the volume growth of\, $\rT=(T,\emptyset,W)$, the invasion percolation cluster of $(U,\emptyset,W)$. 
\begin{thm}\label{thm:tvolume} 
It is almost surely the case that 
\[
\limsup_{r \to \infty} \frac{|B_{\rT}(\emptyset,r)|}{r^2 \log \log r} < \infty\, ,
\]
and that for any $\eps > 0$, 
\[
\liminf_{r \to \infty} \frac{|B_{\rT}(\emptyset,r)|}{r^2/\log^{8+\eps}} = \infty\, .
\]
\end{thm}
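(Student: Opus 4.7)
The plan is to exploit the one-ended backbone structure of the invasion percolation cluster on the PWIT, established in \cite{addario12prim}. Let $v_0=\emptyset, v_1, v_2, \ldots$ denote the vertices of the unique infinite path in $T$ started from the root, and for each $k \ge 0$ let $S_k$ be the subtree of $T$ rooted at $v_k$ consisting of $v_k$ together with all descendants of $v_k$ whose path to $\emptyset$ in $T$ does not pass through $v_{k+1}$. Since $T$ is a tree and the backbone is a geodesic, $d_{\rT}(\emptyset,u) = k + d_{S_k}(v_k,u)$ for $u \in S_k$, and every vertex of $T$ lies in exactly one $S_k$, so that
\[
|B_{\rT}(\emptyset, r)| = \sum_{k=0}^{r} |B_{S_k}(v_k, r-k)|.
\]
The main input from \cite{addario12prim} that I would use is the Markov process description of invasion percolation on the PWIT, which yields a tractable distributional description of $(S_k)_{k \ge 0}$: each $S_k$ is close to a Poisson Galton-Watson tree whose offspring mean $\lambda_k$ satisfies $1-\lambda_k$ of order $1/k$ up to logarithmic corrections, with limited dependence across $k$ that can be handled by coupling.

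For the upper bound, I would use the trivial estimate $|B_{S_k}(v_k,r-k)| \le |S_k|$ and show that the near-critical subtree sizes $|S_k|$ satisfy an upper-tail bound of the form $\p{|S_k| \ge C k \log k} \le k^{-2}$, summable in $k$. Borel-Cantelli then forces $|S_k| \le C k \log\log k$ for all but finitely many $k$ almost surely (the $\log\log$ factor being the standard Borel-Cantelli envelope), and summing over $k \le r$ gives $|B_{\rT}(\emptyset,r)| \le C' r^2 \log\log r$.

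For the lower bound one must lower-bound $|B_{S_k}(v_k,r-k)|$, not merely $|S_k|$, which forces control of the depth profile of $S_k$. The strategy is: for suitable constants $a,b$, show that with probability at least $1-k^{-2}$ the subtree $S_k$ contains at least $k/\log^a k$ vertices within graph-distance $\log^b k$ of $v_k$, via a second moment computation on near-critical Poisson Galton-Watson trees truncated at depth $\log^b k$. For all $k$ up to $r - \log^b r$ we have $r-k \ge \log^b k$, so these depth-$\log^b k$ balls are entirely captured in $B_{\rT}(\emptyset,r)$. Summing $\sum_{k \le r - \log^b r} k/\log^a k \gtrsim r^2/\log^a r$ gives the lower bound, and tuning $a,b$ (and tracking the polylogarithmic losses accumulating in the estimates for $1-\lambda_k$, the near-critical survival probability, and the depth-profile) is needed to achieve the clean final exponent $8+\eps$.

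The main obstacle is the quantitative depth-profile step in the lower bound: one must show that a near-critical Poisson GW tree of typical size $\sim k$ contains a polynomial-in-$k$ number of vertices at graph-depth $\mathrm{polylog}(k)$ from the root, with enough concentration to survive a Borel-Cantelli argument, and then transfer this estimate from idealized GW trees to the actual invasion subtrees $S_k$ using the coupling from \cite{addario12prim}. Bookkeeping the polylogarithmic losses so that the final lower bound exponent is exactly $8+\eps$ (rather than some larger quantity) will be the most delicate part of the argument.
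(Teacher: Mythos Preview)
Your backbone decomposition $|B_{\rT}(\emptyset,r)| = \sum_{k \le r} |B_{S_k}(v_k,r-k)|$ is natural, but the term-by-term Borel--Cantelli strategy fails in both directions because the off-backbone subtrees $S_k$ are not concentrated. Accepting your own heuristic that $S_k$ is approximately $\pgw(\lambda_k)$ with $1-\lambda_k \asymp 1/k$, the bound (\ref{eq:pgwlambdabound}) gives $\p{|S_k| \ge m} \asymp m^{-1/2}$ throughout the range $1 \le m \ll k^2$; in particular $\p{|S_k| \ge C k \log k} \asymp (k \log k)^{-1/2}$, which is not summable. The best pointwise control available is $|S_k| \le C k^2 \log k$ eventually, yielding only $|B_{\rT}(\emptyset,r)| \lesssim r^3 \log r$. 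Dually, $\p{|S_k|=1}$ is bounded below uniformly in $k$ (a near-critical branching process dies immediately with probability $\approx e^{-1}$), so the lower-bound claim that $|B_{S_k}(v_k,\log^b k)| \ge k/\log^a k$ with probability $1-k^{-2}$ is impossible; a second-moment computation gives at best a constant-probability lower bound on a single $S_k$, not a high-probability one.

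The paper circumvents both obstacles by avoiding termwise control. For the upper bound it invokes the stochastic domination of $\rT$ by the Poisson incipient infinite cluster (from \cite{addario12prim}) together with the Barlow--Kumagai tail bound $\p{|B_{\rT_{\mathrm{IIC}}}(\emptyset,r)| \ge \lambda r^2} \le c_0 e^{-c_1 \lambda}$ on the \emph{whole} ball, which already incorporates the averaging across heavy-tailed $S_k$. For the lower bound, rather than summing many small contributions it locates a \emph{single} large pond $P_{i_1}$ with $Z_{i_1} \ge r^2/x$ near the root (using Proposition~\ref{prop:diamsum} to control its distance to $\emptyset$), and then shows via the Luczak--Winkler monotone coupling of uniform random trees and the diameter bound of Theorem~\ref{thm:treeheight} that this one pond already places $\gtrsim r^2/x$ vertices inside $B_{\rT}(\emptyset,O(r))$, giving the stretched-exponential tail of Proposition~\ref{prop:vglower}.
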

In Theorems~\ref{thm:volume} and~\ref{thm:tvolume}, we have stated our bounds in terms of the unweighted graph distance. However, using the explicit characterizations of $\rM$ and $\rT$ given in Section~\ref{sec:distribution_info}, it is a straightforward technical exercise to establish the same bounds for the growth of $B'_{\rM}(\emptyset,r)$ and $B'_{\rT}(\emptyset,r)$. 

A similar dichotomy of dimensionality, paralleling the difference in volume growth between $\rT$ and $\rM$, appears when studying the Gromov-Hausdoff-Prokhorov  scaling limit of the minimum spanning tree; see \cite{us2013} for details. Broadly speaking, 
the tree $\rT$, which is a subtree of $\rM$, determines the global metric structure of $\rM$, in the sense that for ``typical'' nodes $u,v$ of $\rM$, the distance $d_{\rM}(u,v)$ is of about the same order as $d_{\rM}(\hat{u},\hat{v})$, where $\hat{u}$ and $\hat{v}$ are the nearest nodes in $\rT$ to $u$ and $v$, respectively. However, the bulk of the mass in $\rM$ lies outside of the subtree $\rT$. Theorems~\ref{thm:volume} and~\ref{thm:tvolume} may be viewed as a step towards formalizing this picture. 

We expect that $|B_{\rM}(\emptyset,r)|/r^3$ forms a tight sequence but that almost surely 
\[
\liminf_{r \to \infty} \frac{|B_{\rM}(\emptyset,r)|}{r^3}=0, \quad \limsup_{r \to \infty} \frac{|B_{\rM}(\emptyset,r)|}{r^3}=\infty\, .
\]
A proof of any of these predictions would be interesting. It would also be be interesting to pin down precisely the almost sure fluctuations of 
$|B_{\rM}(\emptyset,r)|/r^3$ (assuming this is the right renormalization), or even of $|B_{\rT}(\emptyset,r)|/r^2$. The work of \cite{duquesne06hausdorff} on the exact Hausdorff measure function for the Brownian CRT heuristically suggests that the almost sure fluctuations of $|B_{\rT}(\emptyset,r)|/r^2$ may be of order $\log\log r$, but the connection between the two settings is rather tenuous. 

Theorem~\ref{thm:tvolume} should be compared with \cite{angel08invasion}, Theorem 1.6. 
The latter shows that, writing $\rY(d)$ for the local weak limit of invasion percolation on an infinite $d$-ary tree, 
$|B_{\rY(d)}(\emptyset,r)|/(dr^2)$ converges in distribution as $r \to \infty$ (and explicitly describes the Laplace transform of the limit). Theorem 1.9 of \cite{angel08invasion} additionally shows that 
$\limsup_{r \to \infty} \E{r^2/|B_{\rY(d)}(\emptyset,r)|} < \infty$. However, we do not see how to deduce our Theorem~\ref{thm:tvolume} from the results of \cite{angel08invasion}. 

We finish the description of our contributions with a suggestion for future work: it would be quite interesting to understand the spectral and diffusive properties of $M$. Results of Barlow et.\ al.\ \cite{barlow2008random} and of Kumagai and Mizumi \cite{kumagai2008heat} suggest that for simple random walk $(x_n,n \ge 1)$ on $M$, started from the root $\emptyset$, we should have $\p{x_n=\emptyset} = n^{-3/4+o(1)}$ and $d_{\rM}(\emptyset,x_n) = n^{1/4+o_p(1)}$. However, the volume growth upper bound of Theorem~\ref{thm:volume} is not sharp enough to allow such results to be applied ``out of the box'', and neither do the requisite resistance bounds follow immediately from the current work. 

\subsection{Proofs of the main results (a brief sketch)}\label{sec:briefsketch}
The upper bound in Theorem~\ref{thm:tvolume} is straightforward. An earlier paper \cite{addario12prim} showed that $\rT$ is stochastically dominated by a Poisson$(1)$ Galton-Watson tree conditioned to be infinite. This fact, together with existing volume growth estimates for the incipient infinite cluster on trees \cite{barlow06iic}, yields the upper bound. For the lower bound in Theorem~\ref{thm:tvolume}, we analyze an explicit description of the distribution of $\rT$ from \cite{addario12prim}. This description, reviewed in Section~\ref{sec:wwlprim}, below, states that $\rT$ is comprised of a sequence of Poisson$(1)$ Galton-Watson trees, always of random size but conditioned to be increasingly large as the sequence goes on, and glued together along a backbone. Roughly speaking (ignoring logarithmic corrections), this allows us to find a subtree of $T$ distributed as a conditioned Poisson Galton-Watson tree with around $r^2$ vertices, and containing a backbone node of distance about $r$ from the root. Such a (sub)tree also has diameter roughly $r$, and this yields the lower bound. 

To prove Theorems~\ref{thm:main} and \ref{thm:volume}, we introduce, and then study, a two-step construction of the minimum spanning tree $\rM_n$. The construction is more elegant in the $n \to \infty$ limit, so we now outline the construction for $\rM=(M,\emptyset,W)$. 

We associate to each vertex $v \in V(M)$ an {\em arrival time} $a(v)$. Recall that $\rT=(T,\emptyset,W)$ is the invasion percolation cluster of $(U,\emptyset,W)$; its distribution will be explicitly described in Section~\ref{sec:wwlprim}. Given a vertex $v \in V(M)$, if $v \in V(T)$ then set $a(v)=1$. If $v \not\in V(T)$ then write $P_v$ for the shortest path in $U$ from $v$ to $T$ (this path lies within $M$), and let $a(v) = \max\{W(e), e \in E(P_v)\}$. 
\nomenclature[Av]{$a(v)$}{For $v \in V(T)$, $a(v)=1$; for $v \not\in V(T)$, $a(v)$ is the weight of the largest weight edge on the path from $v$ to $T$.}
Next, for $\lambda \ge 1$, let $M(\lambda)$ be the subtree of $M$ with vertex set $V(M(\lambda))=\{v \in V(M): a(v) \le \lambda\}$. 
\nomenclature[Mlambda]{$M(\lambda)$}{Subtree of $M$ with vertex set $\{v \in V(M): a(v) \le \lambda\}$.}
It will turn out that, almost surely, $1 < a(v) < \infty$ for all $v \not\in V(T)$, so $M(1)=T$ and $M=\lim_{\lambda \to \infty} M(\lambda)$. We view $M$ as being built in two steps; first, the tree $T$ is constructed (via Prim's algorithm, say). Second, the remaining vertices of $M$ are dynamically exposed by letting $\lambda$ grow from $0$ to $\infty$. 

For $\lambda \ge 1$ let $\rM(\lambda)=(M(\lambda),\emptyset,W)$. 
It turns out that, conditional on $\rT=\rM(1)$, the stochastic process $(\rM(\lambda),0 \le \lambda \le \infty)$ is Markovian (this is not a surprise) and its transition kernel has an explicit and pleasing form. This picture builds upon on the description of $\rT$ from given in Section~\ref{sec:wwlprim}, so we postpone the details until Section~\ref{sec:pwag}. 

We now sketch the dynamics of $(M(\lambda),\lambda \ge 1)$. 
In brief, at each time $\lambda > 1$, there is some set of {\em inactive} vertices $I_{\lambda}$ in $M(\lambda)$ - these vertices form a subtree of $T$ containing $\emptyset$. The process $(I_{\lambda}, \lambda \ge 1)$ is decreasing in $\lambda$, it satisfies $\lim_{\lambda \downarrow 1} I_{\lambda} = V(T)$, and there is some almost surely finite time $\lambda^*$ at which $|I_{\lambda^*}|=0$. 

Once a vertex $v$ is active, subcritical Poisson Galton-Watson trees begin to attach themselves to $v$ according to a particular inhomogeneous rate function which decays exponentially quickly in $\lambda$; this process happens independently for each active vertex, and is responsible for the growth of $M(\lambda)$ as $\lambda$ increases. When a tree attaches to $v$, all its vertices immediately become active. We dub this a {\em Poisson Galton-Watson aggregation process}.

By combining the volume growth upper bound for $\rT$ from Theorem~\ref{thm:tvolume} with a direct analysis of the Poisson Galton-Watson aggregation process, we are able to prove the volume growth upper bound from Theorem~\ref{thm:volume}. It seems likely that the lower bound should also be provable ``in the limit'', but I have been unable to 
achieve such an argument. For the lower bound, I instead resort to a somewhat involved second moment calculation, based on an analysis 
of modified versions of Prim's algorithm, started from three distinct vertices (it turns out to be necessary to consider two vertices aside from the root, which is why three distinct vertices are needed for a second moment argument). 

Finally, the proof of Theorem~\ref{thm:main} proceeds by analyzing a hybrid MST algorithm. 
Given a small input parameter $\eps > 0$, the we first Prim's algorithm for a random number of steps, halting at a (stopping) time $\tau$ that is $\Theta(\eps n)$ with high probability\footnote{In fact, $\tau$ is of order $(2+o_{\eps \downarrow 0}(1))\eps n$, but this information is unimportant to the informal description.}. This is the finite-$n$ analogue of the limiting ``step one'' and builds a random RWG $\rM_n(\tau)$ 
whose distribution is close to that of $\rT$ for $\eps$ small. 
Starting from $\rM_n(\tau)$, we then run Kruskal's algorithm to complete the construction of $\rM_n$. This second phase corresponds to the limiting ``step two''. Its relatively straightforward analysis in Section~\ref{sec:mainproof} will yield Theorem~\ref{thm:main}.

\subsection{Outline}
Before turning to proofs, we briefly outline the remainder of the paper. 
In Section~\ref{sec:distribution_info}, we describe a Markovian construction of the invasion percolation cluster $\rT=(T,\emptyset,W)$ that was established in an earlier work \cite{addario12prim} and that will be exploited several times in the current paper. We additionally we describe a continuous-time process that builds $\rM$ from $\rT$; this process is at the heart of the proof of Theorem~\ref{thm:main} and is also used in proving Theorem~\ref{thm:volume}. Finally, in Section~\ref{sec:consequences} we note some distributional identities that may be obtained from our results. 

In Section~\ref{sec:futuremax} we analyze the early stages (the first $o(n)$ steps) of Prim's algorithm on $\rK_n$, and show that the RWG thereby constructed has $\rT$ as its local weak limit. This fact allows us to define a finite-$n$ growth procedure analogous to $(\rM(\lambda),\lambda \ge 1)$, which is used in proving Theorem~\ref{thm:main}. 

In Section~\ref{sec:formax} we establish some useful properties of the decay of weights along the unique infinite path in $\rT$. These properties come into play
in Section~\ref{sec:volgor}, in which we prove Theorem~\ref{thm:tvolume}, and also in proving the lower bound of Theorem~\ref{thm:volume}. 

In Section~\ref{sec:mainproof}, we prove Theorem~\ref{thm:main}. As discussed in the proof sketch above, this proof is based on the analysis of a hybrid MST algorithm. 

In Section~\ref{sec:volgorm} we prove Theorem~\ref{thm:volume}. The upper bound is based on a direct analysis of the process $\rM(\lambda)$, and ultimately reduces to bounding the value of a certain iterated integral. The lower bound, based on a second moment argument which is the most technical element of this work, occupies the majority of Section~\ref{sec:volgorm}. 

Finally, we alert the reader that just before the bibliography, we have provided a list of some of the notation used in the paper, with brief reminders of the definitions and with page references. 
 
\subsection{Acknowledgements}
I started this work in July 2012, after Itai Benjamini and Shankar Bhamidi separately asked me what was known about the subject. I thank both of them for the encouragement to pursue this line of inquiry. Throughout my work on this project I was supported by an NSERC Discovery Grant and by an FQRNT Nouveau Chercheur grant.

\section{{ {\bf The distributions of\, $\rT$ and of\, $\rM$} }} \label{sec:distribution_info}
\subsection{An explicit construction of\, $\rT$. }\label{sec:wwlprim}
The following description of the distribution of $\rT$\, is given by Theorem~27 of \cite{addario12prim}.
Say that an edge $e \in E(T)$ is a {\em forward maximal edge} if the removal of $e$ separates $\emptyset$ from infinity and if, for any other edge $e' \in E(T)$, if the path from $e'$ to $\emptyset$ contains $e$ then $W(e') < W(e)$. Write $S_1=\emptyset$, and list the forward maximal edges of $T$ in increasing order of distance from $\emptyset$ as $(\{R_i,S_{i+1}\},i \ge 1)$. The removal of all forward maximal edges separates $T$ into an infinite sequence of random trees $(P_i,i \ge 1)$, where for each $i \ge 1$, $R_i$ and $S_i$ are vertices of $P_i$. 
\nomenclature[Pi]{$(P_i,i \ge 1)$}{The components of $T$ when forward maximal edges are removed.}

For each $i \ge 1$, let $Z_i=|V(P_i)|$ and let $X_i=W(\{R_i,S_{i+1}\})$. The sequence $((X_i,Z_i),i \ge 1)$ turns out to be a Markov process, whose distribution we now explain. 

For $\lambda \ge 0$, write $\pgw(\lambda)$ to denote a Galton-Watson tree with offspring distribution $\mathrm{Poisson}(\lambda)$. 
\nomenclature[PGW]{PGW$(\lambda)$}{$\mathrm{Poisson}(\lambda)$ Galton-Watson tree.} 
We recall that for $0 < \lambda  \le 1$, the random variable $|\pgw(\lambda)|$ has the {\em Borel-Tanner}$(\lambda)$ distribution, 
given by 
\begin{equation}\label{eq:borel-tanner}
\p{|\pgw(\lambda)|=m} = \frac{1}{m} \p{\mathrm{Poisson}(\lambda m)=m-1} = \frac{1}{m} \frac{e^{-\lambda m} (\lambda m)^{m-1}}{(m-1)!}\, .
\end{equation}
For $\lambda \ge 1$, write $\theta(\lambda)=\p{|\pgw(\lambda)|=\infty}$; 
\nomenclature[Theta]{$\theta(\lambda)$}{$\mathbf{P}\{\mathrm{PGW}(\lambda)=\infty\}$.} 
the function $\theta$ is strictly increasing, and for $\lambda > 1$ is infinitely differentiable and concave.\footnote{The concavity of $\theta$ can be verified using the implicit formula $\theta(\lambda)=e^{\lambda(\theta(\lambda)-1)}$.}
For $\lambda > 1$ we also write $B_{\lambda}$ for a random variable whose distribution is a
``truncated, size-biased'' analogue of the Borel-Tanner distribution: the distribution of $B_{\lambda}$ is given by 
\begin{equation}\label{eq:z_cond_dist}
\p{B_{\lambda} = m} 
=
\frac{\theta(\lambda)}{\theta'(\lambda)} \frac{e^{-\lambda m} (\lambda m)^{m-1}}{(m-1)!}\, .
\end{equation}
\nomenclature[Blambda]{$B_{\lambda}$}{``Truncated, size-biased Borel-Tanner random variable''.}
The fact that $\sum_{m \ge 1} \frac{\theta(\lambda)}{\theta'(\lambda)} \frac{e^{-\lambda m} (\lambda m)^{m-1}}{(m-1)!} = 1$, so that 
the preceding equation indeed defines a probability distribution, is proved in \citep[{Corollary~29}]{addario12prim}. 
Also, equation (\ref{eq:byform}), below, explains our use of the epithets 'truncated' and 'size-biased' for the random variable $B_{\lambda}$. 

By the results of \cite{addario12prim}, $((X_i,Z_i), i \ge 1)$ is a Markov process 
taking values in $(1,\infty) \times \N$, with infinitesimal generator $\kappa$ given by 
\[
\kappa((x,\ell),(y,m)) = \frac{\theta(y)}{\theta(x)} \frac{e^{-ym} (ym)^{m-1}}{(m-1)!} \I{y < x}\, .
\]
In other words, for $1 < y < x \le \infty$ and $1 \le \ell,m < \infty$ we have 
\[
\p{X_{n+1} \in dy ,Z_{n+1}=m~|~(X_n,Z_n)=(x,\ell)} = \frac{\theta(y) dy}{\theta(x)} \frac{e^{-ym} (ym)^{m-1}}{(m-1)!}\, .
\]
Equivalently (see \cite{addario12prim}, Lemma~28 and Corollary~29), we have 
\begin{align}
\p{X_{n+1} \in dy~|~X_n=x} & = \frac{\theta'(y) dy}{\theta(x)}\, , \label{eq:density}
\end{align}
and, conditional on $X_{n+1}$, the random variable $Z_{n+1}$ is distributed as $B_{X_{n+1}}$ and is (conditionally) independent of 
$((X_i,Z_i),i \le n)$. 
Furthermore, $X_1 \eqdist \theta^{-1}(U)$, where $U$ is Uniform$[0,1]$, and, conditional on $X_1$, the random variable $Z_1$ is distributed as $B_{X_1}$. Together with the above 
generator, this initial distribution specifies the distribution of the whole process $((X_i,Z_i),i \ge 1)$. We remark that for each $i \ge 1$, the distribution of $X_{i+1}$ given that $X_i=x$ is the same as the distribution of $X_1$ given that $X_1 \le x$. 

Conditional on the sequence $(Z_i,i \ge 1)$, independently for each $i \ge 1$, $P_i$ is distributed as a uniformly random labelled tree with $Z_i$ vertices. (Equivalently, $P_i$ is distributed as $\pgw(\lambda)$ conditioned to have $Z_i$ vertices - this conditional distribution does not depend on $\lambda > 0$; see, e.g., \cite{lyons12prob}, Exercise 5.15.) 
 Finally, for each $i \ge 1$, conditional on $P_i$, the vertices $R_i$ and $S_i$ are independent, uniformly random elements of $V(P_i)$, and the weights $\{W(e):e \in E(P_i)\}$ are independent and 
 uniform on $[0,X_i]$ (recall that $W(\{R_i,S_{i+1}\})=X_i$). 

The process $(X_i,i \ge 1)$ is sometimes dubbed the {\em forward maximal process} (for invasion percolation on the PWIT in \cite{addario12prim} and for invasion percolation on regular trees in \cite{angel08invasion}). In this paper, we instead use this term for the process $((X_i,Z_i),i \ge 1)$ as a matter of convenience.
\nomenclature[Xizi]{$((X_i,Z_i),i \ge 1)$}{The forward maximal process.} 
 
\subsection{The Poisson Galton-Watson aggregation process} \label{sec:pwag}

Recall the description of the random RWGs $(\rM(\lambda),\lambda \ge 1)$ from Section~\ref{sec:briefsketch}. The aim of this section is to explicitly describe the dynamics of the process $(\rM(\lambda),\lambda \ge 1)$. 

The ``percolation probability'' $\theta(x)$ was defined in Section~\ref{sec:wwlprim}. Given $c > 1$, we also define the Poisson Galton-Watson ``dual parameter'' $c^*$, which is the unique value $c' < 1$ for which 
$ce^{-c}=c' e^{-c'}$. (Another identity for $c^*$ which we will use later is that $c^*=c(1-\theta(c))$.) It is straightforward to verify that for $c > 1$, if $T$ has distribution $\pgw(c)$ then the conditional distribution of $T$, given that $T$ is finite, is $\pgw(c^*)$. We use that $c^*$ decreases as $c$ increases, which is also straightforward to check. 

For any node $v \in V(U)$, $v \ne \emptyset$, write $p(v)$ for the parent of $v$. \nomenclature[P(v)]{$p(v)$}{The parent of node $v$} Then temporarily write $U_v$ for the subtree of $U$ rooted at $v$ and containing only edges of weight less than $W(\{p(v),v\})$. Note that for $c > 1$, conditional on $W(\{p(v),v\})=c$, the tree $U_v$ has $\pgw(c)$ distribution. If we additionally condition $U_v$ to be finite, then it has distribution $\pgw(c^*)$. 

For each vertex $v \in V(M)$, there is a unique infinite path $P$ within $M$ leaving $v$. Write 
$x(v) = \sup\{W(e): e \in P\}$. 
\nomenclature[X(v)]{$x(v)$}{Weight of the largest weight edge in the unique infinite path in $M$ leaving $v$.} 
(If $v \in V(U)\setminus V(M)$ then set $x(v)=\infty$.) If $v \in V(T)$ then, in the terminology of the preceding section, we have $x(v)=X_i$ for some $i \ge 1$. If $v \not \in V(T)$ then consider the unique edge $e=\{u,u'\}$ of $P$ with $u \in V(T)$, $u' \not \in V(T)$. Since $T$ is built by invasion percolation (and is locally finite), there is an infinite path within $T$ leaving $u$ and with all edges of weight less than $x(v)$. In this case the (finite) portion of $P$ connecting $v$ and $u'$ contains a unique edge $e'$ with $a(v)=W(e')$, and we must then have $x(v)=W(e')=a(v)$. Note that the collection of values $(x(u),u \in V(M(\lambda)))$ is measurable with respect to $\rM(\lambda)$. 

Now fix $\lambda > 1$ and a vertex $u \in V(U)$. 
If $u \in V(M(\lambda))$ then let $e_u=\{u,u'\}$ be the smallest weight edge of $U$ incident to $u$ that is not contained in $M(\lambda)$. Necessarily, $u'$ is a child of $u$ in $U$. 
If $u \in V(T)$ then it is possible that $x(u) > \lambda$. In this case all edges from $u$ leaving $T$ have weight at least $x(u) > \lambda$. However, if $u \not\in V(T)$ then $x(u) =a(u) \le \lambda$. Whether or not $u \in V(T)$, given that $u \in V(M(\lambda))$ and $x(u) \le \lambda$, we have that $W(e_u)$ has distribution $\lambda + \mathrm{Exponential}(1)$. Furthermore, given that $u \in V(M(\lambda))$ and $x(u) \le \lambda$, the edge $e$ is an edge of $M$ precisely if $|U_v| < \infty$. 

By the above, we have that for small $\delta > 0$, 
\begin{align*}
& \quad\p{ W(e) \in (\lambda,\lambda+\delta], e \in E(M(\lambda+\delta]),u \in V(M(\lambda)),x(u) \le \lambda)~|~\rM(\lambda)} \\
=&\quad  (1+o_{\delta \downarrow 0}(1)) (1-e^{-\delta}) \p{|\pgw(\lambda)| < \infty}\cdot \I{u \in V(M(\lambda)),x(u) \le \lambda} \\
=&\quad (1+o_{\delta \downarrow 0}(1)) \delta (1-\theta(\lambda))\cdot \I{u \in V(M(\lambda)),x(u) \le \lambda}\, ,
\end{align*}
and 
\begin{align*}
& \quad \p{ W(e) \in (\lambda,\lambda+\delta], e \in E(M(\lambda+\delta]),u \in V(M(\lambda)),x(u) > \lambda~|~\rM(\lambda)} \\
= & \quad o_{\delta \downarrow 0}(1) \I{u \in V(M(\lambda)),x(u) > \lambda}\, .
\end{align*}
Furthermore, given that $W(e) \in (\lambda,\lambda+\delta]$ and that $e \in E(M(\lambda+\delta])$, 
the tree $U_v$ stochastically dominates $\pgw((\lambda+\delta)^*)$ and is stochastically dominated by $\pgw(\lambda^*)$, and $a(w) \in (\lambda,\lambda+\delta]$ for all vertices $w \in V(U_v)$ (in particular for $w=v$). Finally, as the edge weights on vertex-disjoint subtrees of $U$ are independent, the events 
\[
\pran{\{W(e_u) \in (\lambda,\lambda+\delta]\}\cap \{e_u \in E(M(\lambda+\delta))\},u \in V(M(\lambda))}
\]
are conditionally independent given $\rM(\lambda)$, and the subtrees $(U_{u'}, u \in V(M(\lambda)))$ are likewise conditionally independent. 

The preceding information yields the following description of the process $(M(\lambda),\lambda \ge 1)$. At each time $\lambda \ge 1$, there is a set of {\em active vertices} (those vertices $v \in V(U)$ with $x(v) \le \lambda$).  Once a vertex $v$ is active, Poisson Galton-Watson trees begin to attach themselves at $v$; at time $\lambda$, attachments occur at rate $(1-\theta(\lambda))$. This process happens independently for each active vertex. A tree attaching to $v$ at time $s$ is distributed as $\pgw(s^*)$. When a tree attaches to $v$, all its vertices immediately become active. 

We now add the edge weights to the above picture. The edge weights for $M(1)=T$ are already defined.
For $v \in V(U)$, $v \ne \emptyset$, the weights of edges in the subtree of $U$ rooted at $v$ are independent Exponential$(1)$ random variables, independent of all edge weights ouside this subtree. It follows that given that given that $x(v)=\lambda$ (i.e. that $v \in V(M(\lambda))$ and $v \not\in M(\lambda-)$), and given $U_v$, 
the weights of edges in $U_v$ are independent, with density $f_\lambda(x)=(1-e^{-\lambda})\cdot e^{-x}\I{0 \le x \le \lambda}$, and these edge weights are conditionally independent of all other weights of edges in $M(\lambda)$. 
In other words, if a tree $T$ attaches to vertex $u$ at time $\lambda$ then the edge $e$ from the root of $T$ to $u$ has weight exactly $\lambda$, and edges of $T$ have independent weights, each with density $f_\lambda(x)$.

Based on the above description, we call the RWG-valued process $(\rM(\lambda),1 \le \lambda < \infty)$ a {\em Poisson Galton-Watson aggregation process}. 
Note that we may recover $\rM$ as $\rM(\infty)=\lim_{\lambda \to \infty} \rM(\lambda)$. 
For a vertex $v \in V(M)$, if $v \not \in V(M(1))=V(T)$ then 
the value $x(v)$ is the time at which $v$ was added during the Poisson Galton-Watson aggregation process, so $v \in V(M(x(v)))$ but $v \not\in V(M(x(v)-))$. 
Furthermore, if $v \not\in V(T)$ then since a random tree attached at time $x(v)$ has distribution $\pgw(x(v)^*)$, 
the degree $\deg_{M(x(v))}(v)$ has distribution $\mathrm{Poisson}(x(v)^*)+1$, where the $1$ accounts for the parent of $v$. It follows that for any $v \in V(U)$, given that $v \in V(M)$, conditional on $\rT$ and on $x(v)$, the degree $\deg_{M}(v)$ of $v$ in $M$ has distribution
\[
\deg_{M}(v) \eqdist 
\begin{cases}
\deg_{T}(v) + \mathrm{Poisson}(\int_{x(v)}^{\infty} (1-\theta(t))\mathrm{d} t) & \mbox{if}~v \in V(T) \\
1 + \mathrm{Poisson}(x(v)+\int_{x(v)}^{\infty} (1-\theta(t)) \mathrm{d} t)	& \mbox{if}~v \not\in V(T). 
\end{cases}
\]
Since $\int_1^{\infty} (1-\theta(t)) < \infty$, it follows that for all $v \in V(T)$, the conditional 
expectation 
\[
\E{\deg_{M}(v)~|~v \in V(M),x(v),\rT}
\]
 is almost surely finite; from this it is easily seen that $M$ is almost surely locally finite. Also, it will follow from our arguments for the upper bound in Theorem~\ref{thm:volume} that $M$ is almost surely one-ended (see Corollary~\ref{cor:nvdist}, below; there should be a simple direct proof of this fact, however). 

We will eventually prove Theorem~\ref{thm:main} by showing that a hybrid construction process for $\rM_n$, defined in Section~\ref{sec:mainproof}, converges in distribution to the Poisson Galton-Watson aggregation process. 

\subsection{A few consequences of Theorem~\ref{thm:main}.}\label{sec:consequences}
We now briefly remark on some consequences of our weak convergence result and of the above descriptions of\, $\rT$ and $\rM$. The characterization given by Theorem~\ref{thm:main} is straightforward enough to yield an explicit, though somewhat complicated, description of the distribution of the 
degree of the root vertex $\emptyset$. First, as in Section~\ref{sec:wwlprim}, let $U$ be Uniform$[0,1]$, let $X_1=\theta^{-1}(U)$, and given the value of $X_1$, let $Z_1$ be distributed as $B_{X_1}$, whose distribution is given in (\ref{eq:z_cond_dist}).

For integer $m \ge 1$, let $\nu_m$ be the distribution of the degree of node $1$ in a uniformly random labelled tree with nodes $\{1,\ldots,m\}$; this distribution is explicitly given by 
\[
\nu_m(k) = k \cdot \frac{(m-k)^{m-k-1}}{m^{m-1}} \cdot {m-2 \choose k-1}. 
\]
Let $D_1$ have distribution $\nu_{Z_1}(k)$; in other words, given that $Z_1=m$, $D_1$ has distribution $\nu_m$. Also, 
let $D_2$ have distribution Bernoulli$(1/Z_1)$, with $D_2$ conditionally independent of $D_1$ given $(X_1,Z_1)$. The description of $\rT$ given in Section~\ref{sec:wwlprim} implies that $\deg_{T}(\emptyset)$ has distribution $D_1+D_2$. 

Finally, the description of the Poisson Galton-Watson aggregation process implies 
that given $(X_1,Z_1)$, $\deg_M(\emptyset)-\deg_T(\emptyset)$ is independent 
of $\det_T(\emptyset)$ and is Poisson$(\int_{X_1}^{\infty} (1-\theta(t)) \mathrm{d}t)$ distributed. 
Letting $D_3$ be Poisson$\int_{X_1}^{\infty} (1-\theta(t)) \mathrm{d}t$ and be 
conditionally independent of $D_1$ and of $D_2$ given $(X_1,Z_1)$, we then have 
the following corollary. 
\begin{cor}
The random variable $\deg_{M}(\emptyset)$ is distributed as $D_1+D_2+D_3$. 
\end{cor}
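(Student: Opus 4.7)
The plan is to decompose $\deg_{M}(\emptyset)$ as $\deg_{T}(\emptyset) + (\deg_{M}(\emptyset) - \deg_{T}(\emptyset))$, handle the first summand using the explicit construction of $\rT$ from Section~\ref{sec:wwlprim}, and handle the second using the Poisson Galton-Watson aggregation dynamics from Section~\ref{sec:pwag}. Everything is done conditional on $(X_1,Z_1)$ and the outputs will be mutually conditionally independent.

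First I would compute $\deg_{T}(\emptyset) \eqdist D_1+D_2$. By the description in Section~\ref{sec:wwlprim}, $\emptyset=S_1$ lies in the first block $P_1$, which, conditional on $Z_1=m$, is a uniformly random labelled tree on $m$ vertices; moreover $S_1$ is a uniform vertex of $P_1$ and, independently, $R_1$ is a uniform vertex of $P_1$. The edges of $T$ incident to $\emptyset$ are exactly the edges of $P_1$ incident to $S_1$, together with the forward maximal edge $\{R_1,S_2\}$ if and only if $\emptyset=R_1$. By symmetry, the degree of a uniformly chosen vertex in a uniform labelled tree on $m$ nodes has distribution $\nu_m$, so the contribution from $P_1$ has distribution $D_1$. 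The indicator that $\emptyset=R_1$ is Bernoulli$(1/Z_1)$ and, given $(X_1,Z_1)$, is independent of the $P_1$-degree of $S_1$ (since $R_1$ and $S_1$ are independent uniform vertices of $P_1$); this gives the $D_2$ contribution.

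Next I would pin down $x(\emptyset)$ and invoke the aggregation dynamics. Since $\emptyset\in V(T)$, the unique infinite path in $M$ leaving $\emptyset$ is the backbone of $T$, whose edge weights are the decreasing sequence $X_1>X_2>\cdots$ of forward maximal weights. Hence $x(\emptyset)=X_1$. The generator description of Section~\ref{sec:pwag} tells us that for any $v\in V(T)$, conditional on $\rT$ (and hence on $x(v)$), the additional edges attached to $v$ during the aggregation phase $\lambda\in(x(v),\infty)$ form a Poisson process with intensity $(1-\theta(t))\,dt$, independent across vertices and independent of $\rT$. Specializing to $v=\emptyset$ yields
\[
\deg_{M}(\emptyset)-\deg_{T}(\emptyset)\ \Big|\ \rT \ \sim\ \mathrm{Poisson}\!\left(\int_{X_1}^{\infty}(1-\theta(t))\,\d t\right),
\]
and this random variable depends on $\rT$ only through $X_1$. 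This is precisely the conditional distribution of $D_3$ given $(X_1,Z_1)$.

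Finally I would combine the two pieces. The contribution $D_1+D_2$ is a functional of the structure of $P_1$ together with the positions of $R_1,S_1$, while $D_3$ is measurable with respect to the aggregation Poisson process on $\emptyset$ for $\lambda>X_1$; conditional on $(X_1,Z_1)$ these two pieces of randomness are independent. Summing the three conditionally independent random variables produces the claimed equality in distribution. The only real obstacle is verifying the conditional independence structure carefully, but since each ingredient is already isolated in Sections~\ref{sec:wwlprim} and~\ref{sec:pwag}, this reduces to bookkeeping rather than new computation.
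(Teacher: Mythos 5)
Your proposal is correct and follows essentially the same route as the paper: decompose $\deg_M(\emptyset)$ as $\deg_T(\emptyset)+(\deg_M(\emptyset)-\deg_T(\emptyset))$, obtain $D_1+D_2$ from the block $P_1$ (uniform labelled tree on $Z_1$ vertices, with $S_1=\emptyset$ and $R_1$ independent uniform vertices) and $D_3$ from the aggregation process at rate $1-\theta(t)$ for $t>x(\emptyset)=X_1$, with the conditional independence given $(X_1,Z_1)$ checked exactly as the paper does. The only tiny imprecision is the claim that the backbone edge weights ``are'' $X_1>X_2>\cdots$ (the backbone also traverses edges inside the blocks $P_i$), but since those edges have weight at most $X_i<X_1$, your conclusion $x(\emptyset)=X_1$ is unaffected.
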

It is interesting to contrast this result with Proposition~2 from \cite{aldous90randomtree}, which expresses 
$\deg_{M}(\emptyset)$ as a mixture of Poisson random variables: 
\[
\p{\deg_{M}(\emptyset)=i+1} = \int_0^1 \p{\mathrm{Poisson}(\Phi(u))=i}\mathrm{d}u\, ,
\]
where for $0\le u < 1$ we set $\phi(u)=\int_0^u \log(1/x)/(1-x)\mathrm{d}x$. 

We close this section with a final observation. By a classic result of Frieze \cite{frieze85mst} we have that 
\[\E{\sum_{e \in E(M):\emptyset \in e} W(e)} = 2\zeta(3)\, .
\] 
Combining this fact with Theorem~\ref{thm:main} and with Theorem~3 of \cite{mcdiarmid97mst}, it follows that 
for $U\eqdist \mathrm{Uniform}[0,1]$, we have 
\[
\E{\int_{\theta^{-1}(U)}^{\infty} t(1-\theta(t)) \mathrm{d}t} = 2\zeta(3)-\zeta(2)\, ,
\]
which is perhaps not obvious.  

\section{ {\bf Future maxima in $\rK_n$, and a strengthening of Proposition~\ref{thm:abgk}}}\label{sec:futuremax}
The paper \cite{addario12prim} shows that the local weak limit of the early stages of Prim's algorithm on $\rK_n$ is given by $\rT=(T,\emptyset,W)$.\footnote{See the remark just after Theorem~27 of \cite{addario12prim}, together with the remark in Section~1.1 of the same paper.}
More precisely, for $1 \le k \le n$ let $M_{n,k}$ be the subtree of $M_n$ built by the first $k$ steps of invasion percolation on $\rK_n$, so $M_{n,k}$ has vertices $1=v_1(\rK_n),\ldots,v_k(\rK_n)$ and edges $e_1(\rK_n),\ldots,e_{k-1}(\rK_n)$. 
\nomenclature[Mnk]{$M_{n,k}$}{Subtree of $M_n$ built by first $k$ steps of invasion percolation.} 
Then write $\rM_{n,k}=(M_{n,k},1,W_n)$. 
Similarly, 
for $k \ge 1$, write $T_k$ for the tree 
built by the first $k$ steps of Prim's algorithm on $\rU=(U,\emptyset,W)$, so $T_k$ has vertices $\emptyset=v_1(\rU),\ldots,v_k(\rU)$ and edges $e_1(\rU),\ldots,e_{k-1}(\rU)$, 
and let $\rT_k=(T_k,\emptyset,W)$. 
\nomenclature[Tk]{$T_{k}$}{Subtree of $T$ built by first $k$ steps of invasion percolation.} 
\begin{prop}[\cite{addario12prim}]\label{thm:abgk}
Fix any function $k(n):\N \to \N$ such that $k(n) \to \infty$ and $k(n)=o(n^{1/2})$. 
Then for each $n$ we may couple $\rM_{n,k(n)}$ and $\rT$ so that 
\[
\p{\rM_{n,k(n)} \ne \rT_{k(n)}} \to 0
\]
as $n \to \infty$. In particular, $\rM_{n,k(n)} \convdist \rT$ in the local weak sense. 
\end{prop}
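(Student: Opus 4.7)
The plan is to construct a joint coupling of Prim's algorithm on $\rK_n$ and on $\rU$ that succeeds---meaning it produces identical rooted weighted graphs under a fixed vertex bijection---for the first $k(n)$ steps, and to show that failure has probability $O(k(n)^2/n)=o(1)$. The key observation is that the $n-1$ iid outgoing weights at a single vertex of $\rK_n$, in their sorted order, approximate the first $n-1$ atoms of a rate-one Poisson process (the $j$-th gap has rate $(n-1-j)/(n-1)$, close to $1$ while $j\ll n$), and that the uniformly random destinations of those edges look ``new'' with high probability as long as too few vertices have been visited.

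I would build the coupling inductively with ``lazy'' edge reveals. At step $i$ we maintain trees $\rM_{n,i}$ and $\rT_i$ together with a weight-preserving isomorphism $\phi_i$ between their vertex sets. For each visited vertex $v\in V(\rM_{n,i})$ we have revealed the first $j_v$ sorted outgoing weights at $v$ (with random destinations in $[n]\setminus\{v\}$), and for the matched $u=\phi_i(v)$ we have revealed the first $j_v$ atoms of the rate-one Poisson process $\Pi_u$ describing the outgoing weights at $u$ in $\rU$. To extend the reveal at $v$ by one more edge I would draw an Exp$((n-1-j_v)/(n-1))$ gap on the $\rK_n$ side, maximally coupled with an Exp$(1)$ gap on the $\rU$ side; on the $\rK_n$ side the destination is sampled uniformly from the $n-1-j_v$ not-yet-revealed labels of $[n]\setminus\{v\}$, and the step is declared a failure if this destination falls in $V(\rM_{n,i})$ (a ``back-edge'', which cannot occur in the tree $\rU$). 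Prim's next edge is then chosen on both sides by taking the minimum over $v$ of the currently queued candidates, revealing more atoms at the relevant $v$'s as needed to clear any back-edge candidates.

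At step $i$ only $O(i)$ atoms have been revealed in total (one per tree edge plus a single ``next candidate'' at each visited vertex), so $j_v=O(i)$ uniformly, and each new reveal fails in weight with TV probability $O(j_v/n)=O(i/n)$ and in destination with probability $|V(\rM_{n,i})|/(n-1-j_v)=O(i/n)$. A union bound over $i=1,\ldots,k(n)$ yields total coupling failure probability $O\!\left(\sum_{i=1}^{k(n)}i/n\right)=O(k(n)^2/n)\to 0$ under the hypothesis $k(n)=o(n^{1/2})$. This yields the bound on $\p{\rM_{n,k(n)}\ne\rT_{k(n)}}$; the local weak convergence $\rM_{n,k(n)}\convdist\rT$ is then immediate, since $\rT$ is almost surely locally finite and so $\rT_{k(n)}\to\rT$ in $d_{\mathrm{LWC}}$ as $k(n)\to\infty$.

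The main obstacle is defining the coupling cleanly on a single probability space: the lazy reveal of weight/destination pairs on the $\rK_n$ side must produce exactly the joint distribution of iid Exp$(n-1)$ edge weights with uniformly random destinations, and the simultaneous maximal coupling of gap distributions together with the ``reject back-edge'' procedure must remain consistent with that marginal. Once the coupling is set up correctly, the two per-step failure bounds and their union-bound accounting are elementary counting arguments.
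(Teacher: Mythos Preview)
This proposition is not proved in the present paper; it is quoted from \cite{addario12prim} (see the sentence immediately following its statement). Your sketch is the standard Aldous--Steele PWIT coupling and is correct: lazy reveals of sorted incident weights and their destinations, with per-step failure probability $O(i/n)$ coming from both the gap-rate discrepancy and the back-edge collision, summed to $O(k(n)^2/n)=o(1)$, is precisely the mechanism behind the cited result.

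One subtlety you do not make explicit is that edges of $K_n$ are undirected, so the sorted weight lists at different vertices are not independent (the weight of $\{u,v\}$ appears in both lists). This is, however, already absorbed into your back-edge failure event: on the good event no edge is ever examined from both endpoints, and conditionally on that the reveals behave exactly as if independent. Your caveat about constructing the joint probability space is apt but not a real obstacle; revealing order statistics sequentially together with a uniformly random label assignment does reproduce the iid exponential edge law, and the maximal coupling of the exponential gaps is straightforward.
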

The RWG $\rT$ is almost surely locally finite (this is proved in \cite{addario12prim} but is also easy to see directly), and so necessarily $\rT_{k}\convas \rT$ in the local weak sense, as $k \to \infty$. 
Using this fact, the first assertion of the proposition immediately yields the distributional convergence claimed in the proposition. 
We will in fact require an analogue of Proposition~\ref{thm:abgk} that holds as long as $k(n)=o(n)$, and now prove such a result. 
\begin{prop}\label{prop:abgw}
Fix $f:\N\to\N$ with $f(n) \to \infty$ and $f(n)=o(n)$. Then we may couple $\rM_{n,f(n)}$ and $\rT$ so that 
\[
\lim_{n \to \infty} \sup\left\{r: \rM_{n,f(n)}[B_{\rM_{n,f(n)}}(r)] = \rT_{f(n)}[B_{\rT_{f(n)}}(r)] \right\} \stackrel{\mathrm{a.s.}}{=} \infty. 
\]
In particular, $\rM_n(f(n)) \convdist \rT$
as $n \to \infty$. 
\end{prop}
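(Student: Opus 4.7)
The plan is to bootstrap Proposition~\ref{thm:abgk}. By a standard Borel--Cantelli diagonalization argument (choosing $r = r(n) \to \infty$ slowly enough), it suffices to show that for each fixed $r \ge 1$ one may couple $\rM_{n,f(n)}$ with $\rT_{f(n)}$ so that $B_{\rM_{n,f(n)}}(r) = B_{\rT_{f(n)}}(r)$ with probability tending to one as $n \to \infty$.

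Fix $r \ge 1$ and $\eps > 0$. Since $\rT$ is a.s.\ locally finite, the stabilization time $\tau_r := \inf\{k : B_{\rT_\ell}(r) = B_{\rT}(r) \text{ for all } \ell \ge k\}$ is a.s.\ finite; choose a constant $K = K(r, \eps)$ with $\p{\tau_r < K} > 1 - \eps$. Applying Proposition~\ref{thm:abgk} with the constant sequence $k(n) \equiv K$ (trivially $o(n^{1/2})$), we obtain a coupling with $\p{\rM_{n,K} = \rT_K} \to 1$. On the intersection of the resulting high-probability events, $B_{\rM_{n,K}}(r) = B_{\rT_K}(r) = B_{\rT}(r)$, a finite set whose size $L := |B_{\rT}(r)|$ is bounded in probability.

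The main step is then to argue that, in $\rK_n$, no edge added by Prim during steps $K+1, \ldots, f(n)$ has its tree-side endpoint in the set $S := B_{\rM_{n,K}}(r-1)$. The argument combines two observations. First, a structural fact about invasion percolation on the PWIT: for each $v \in V(\rT)$, every outgoing PWIT edge from $v$ of weight at most $X_1$ is eventually processed by Prim (since Prim's running maximum converges to $X_1$ and Prim exhausts every outgoing slot below its maximum), so the residual outgoing weights from any $v \in S$ strictly exceed $X_1 > 1$. Via the step-$K$ coupling, this lower bound persists for the corresponding outgoing weights from $v \in S$ in $\rK_n$. Second, in the same coupling, the running maximum weight added by Prim on $\rK_n$ satisfies $M_k \le X_1$ for all $k$ (this is the PWIT statement transported by the coupling). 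Hence the minimum residual weight from $S$ strictly exceeds the maximum weight added during steps $K+1, \ldots, f(n)$, so Prim selects no edge incident to $S$; this yields $B_{\rM_{n,f(n)}}(r) = B_{\rM_{n,K}}(r) = B_{\rT}(r) = B_{\rT_{f(n)}}(r)$, as required.

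The main obstacle is that the exact coupling of Proposition~\ref{thm:abgk} does not survive past step $\Theta(n^{1/2})$: birthday-style collisions arise at rate $\sim k/n$ per step, and accumulate to give a positive expected number of collisions over the first $f(n)$ steps. It is not necessary, however, to maintain exact coupling throughout---we need only the portion of the coupling affecting the ball of radius $r$ around the root to survive. Since this portion concerns at most $O(L^2)$ ``vertex-pair'' identification events, each of failure probability $O(L/n)$, a union bound bounds the total failure probability by $O(L^3/n) = o_p(1)$. The technical heart of the argument is to set up a coupling under which the PWIT ``outgoing slots'' at each vertex $v \in S$ remain faithfully identified with actual $\rK_n$-outgoing edges beyond step $K$, together with the verification that this identification persists with probability approaching one on the event that $L$ is bounded.
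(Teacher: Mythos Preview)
Your proposal has the right overall shape---couple exactly up to a fixed step $K$ via Proposition~\ref{thm:abgk}, then argue that nothing further happens inside $B(r)$ between steps $K$ and $f(n)$---but the second half contains a genuine gap that your final paragraph does not close.

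First, a small error: the claim that residual outgoing weights from $v\in S$ strictly exceed $X_1$ is incorrect. If $v$ lies in pond $P_m$ (in the decomposition of Section~\ref{sec:wwlprim}), then invasion percolation eventually absorbs every child of $v$ of weight below $X_m$, not $X_1$; the residual edges at $v$ have weight exceeding $X_m$, which may be much smaller than $X_1$. This is repairable (replace $X_1$ by $\min_{v\in S} x(v)>1$), but it is worth getting right.

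The real gap is the assertion that the running maximum of Prim on $\rK_n$ stays below your threshold throughout steps $K+1,\ldots,f(n)$. You write that this ``is the PWIT statement transported by the coupling,'' then immediately acknowledge that the coupling does not survive past $\Theta(n^{1/2})$. Your proposed fix---that only $O(L^2)$ vertex-pair identifications need to hold, each failing with probability $O(L/n)$---controls the local structure near $S$ but says nothing about the \emph{global} behavior of Prim on $\rK_n$. Even if every vertex of $S$ and every low-weight neighbor of $S$ is faithfully coupled to its PWIT counterpart, Prim on $\rK_n$ could still, at some step $j\in(K,f(n)]$, have its minimum boundary edge be one of the residual edges out of $S$. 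Ruling this out requires knowing that the minimum boundary edge at step $j$ has weight below your threshold $z$, which is a statement about the whole tree $M_{n,j}$, not just about $S$.

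The paper supplies exactly this missing global input: once Prim reaches the giant component of $K_n^z$ (which happens by step $\log^5 n$ with high probability, via (\ref{eq:giantldbound}) and the argument leading to (\ref{eq:gzupper})), it must exhaust all $\sim n\theta(z)\gg f(n)$ vertices of the giant before adding any further edge of weight $\ge z$. Hence $g_n(f(n),z)\le \log^5 n$ with high probability, and in particular no edge of weight $\ge z$ is added between steps $k(n)$ and $f(n)$. Combined with the PWIT-side estimate (\ref{eq:abgz1}) and the coupling at scale $k(n)=o(n^{1/2})$, this yields $\rT_{g(z)}=\rM_{n,g_n(f(n),z)}$ with high probability, from which the ball statement follows by taking $z\downarrow 1$. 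Your argument needs an analogue of (\ref{eq:gzupper}), and the ``local coupling'' heuristic does not provide one.
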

The proof of Proposition~\ref{prop:abgw} will introduce, in simplified form, some of the important 
structures and techniques that will be developed later in the paper. 

We begin by considering Prim's algorithm on the PWIT. 
Given $v \in V(U)$ and $z > 0$, write $U_v(z)$ for 
the subtree of $U$ consisting of $v$ and all descendents of $v$ whose 
path to $v$ contains only edge of weight less than $z$. (In the notation of Section~\ref{sec:pwag}, 
for $v \ne \emptyset$, we in particular have $U_v(W(\{p(v),v\}))=U_v$.)
This 
tree is is distributed as $\pgw(\lambda)$ and is therefore infinite with probability $\theta(z)$.

Given $z > 1$, let 
\[
g(z)=\max\{i: W(e_i(\rU)) \ge z\}\, ,
\]
or $g(z)=0$ if $W(e_i(\rU)) < z$ for all $i \ge 1$. 
Next, list the indices $i$ for which $e_i(\rU) \ge z$ as 
$(i_1,\ldots,i_J)$, where $J=J(z,\rU) \ge 0$ is random. 
Note that for each $j \ge 1$, if $J \ge j$ then 
$U_{v_{i_j+1}(\rU)}(z)$ is distributed as $\pgw(z)$,
and so 
\[
\p{J=j|J \ge j} = \theta(z)\, .
\]
 It follows that for all $j\ge 0$, $\p{J=j} = (1-\theta(z))^{j}\theta(z)$, 
 and $\e{J}=1/\theta(z)$. 
Also note that for $j > 0$, if $J > j$ then $U_{v_{i_j+1}}(z)$ is distributed 
as $\pgw(z)$ conditioned to be finite, or in other words 
is distributed as $\pgw(z^*)$, and so 
\[
\E{i_{j+1}-i_j|J > j} = \E{|\pgw(z^*)|} = \frac{1}{1-z^*}\, 
\]
(the latter equality, easily proved, is re-stated in (\ref{eq:meanvar}) below). 
Since $g(z)=i_J$ whenever $g(z)>0$, it follows that 
\begin{equation}\label{eq:gzexp}
\E{g(z)} = \frac{1}{\theta(z)}\cdot\frac{1}{1-z^*}. 
\end{equation}

We next turn to Prim's algorithm on $\rK_n$. Using the same definition of $g(z)$ does not turn 
out to be suitable in this case: with high probability the largest weight edge is the 
last edge added by Prim's algorithm, in which case we would have $g(z)=n-1$ for all $z$ for which 
there are edges of weight greater than $z$ in the minimum spanning tree. We instead look at the largest 
edge added before some 'threshold time' $k$; this is formalized in the following definition. 

For $z \ge 0$ let $K_n^z$ be the subgraph of $K_n$ with edges $E(K_n^z)=\{e \in E(K_n): W_n(e) \le z\}$. 
\nomenclature[Knz]{$K_n^z$}{Subgraph of $K_n$ with edges $E(K_n^z)=\{e \in E(K_n): W_n(e) \le z\}$.}
Given $1 \le j \le n-1$ and $z > 0$, let 
\begin{equation}\label{eq:gzdef}
g_n(j,z)= \sup\{ \ell: 1 \le \ell \le j, W_n(e_{\ell}(\rK_n)) \ge z\}\ ,
\end{equation}
In words, $g_n(j,z)$ is the last time up to step $j$ that Prim's algorithm on $\rK_n$ adds an edge of weight at least~$z$. 
\nomenclature[Gn]{$g_n(j,z)$}{Last  time before step $j$ that Prim's algorithm on 
$\rK_n$ adds an edge of weight~$\ge z$.}
We wish to bound the upper tail of $g_n(m,z)$, for suitable values of $m$ and $z$. 
To do so, list the connected components of $K_n^z$ as $C_n^z=(C_{n,1}^z,\ldots,C_{n,m}^z)$, 
in decreasing order of size (with ties broken lexicographically, say). 
\nomenclature[Cnz]{$C_n^z$}{Components of $K_n^z$, listed in decreasing order of size as $(C_{n,1}^z,\ldots,C_{n,m}^z)$.}
Consider the behaviour of Prim's algorithm conditional on the sequence $C_n^z$. 
Each time Prim's algorithm adds an edge connecting to some component $C_{n,i}^z$, it 
fully connects $C_{n,i}^z$ before adding any edge leaving $C_{n,i}^z$. Furthermore, once $C_{n,i}^z$ 
is fully connected, the edge leaving $C_{n,i}^z$ connects $C_{n,i}^z$ with a uniformly random vertex 
among all those not yet uncovered. Writing $\sigma_n=\sigma_n(z)$ for the number of components 
uncovered before an edge to $C_{n,1}^z$ is added, for each $j \ge 0$ we therefore have 
\[
\p{\sigma_n = j~|~ \sigma_n \ge j, C_n^z} \ge \frac{|C_{n,1}^z|}{n}. 
\]
We next use two bounds for the sizes of the components of $K_{n}^{z}$; the first is due to Stepanov \cite{stepanov1970probability} (pages 64-65), and the second can be found in \cite{remco13rgcn}, Proposition 4.12.\footnote{There is probably an earlier reference for this rather straightforward result, but I have not managed to track one down.} 
For any $\eps > 0$ there is $\delta > 0$ such that for all $n$ sufficiently large, 
\begin{equation}\label{eq:giantldbound}
\p{|C_{n,1}^z-n\theta(z)| > \eps n} < e^{-\delta n}, \quad \p{|C_{n,2}^z| \ge \log^3 n} \le n^{-100}\, .
\end{equation}
It follows from the first bound that there is $\delta > 0$ such that for all $j \ge 1$, 
\[
\p{\sigma_n > j} \le (1-\theta(z)/2)^j + e^{-\delta n}. 
\]
However, for $m \ge \sigma_n \cdot |C_{n,2}^z|$, if $|C_{n,1}^z| \ge m$ then necessarily $g_n(m,z) \le \sigma_n \cdot |C_{n,2}^z|$. 
It then follows from the above bounds that if $\log^5 n \le  m \le n(\theta(z)-\eps)$, we have
\begin{align}
\p{g_n(m,z) \ge \log^5 n} & 
\le \p{|C_{n,2}^z| \ge \log^3 n} + \p{\sigma_n \ge \log^2 n} + \p{|C_{n,1}| \le m} \nonumber\\
& \le n^{-100}+ (1-\theta(z)/2)^{\log^2 n} + 2e^{-\delta n} \nonumber \\
& \le n^{-99}\, ,\label{eq:gzupper}
\end{align}
for $n$ sufficiently large. 
With these bounds under our belt, we are prepared to prove Proposition~\ref{prop:abgw}.
\begin{proof}[Proof of Proposition~\ref{prop:abgw}]
\begin{figure}[htb]
\includegraphics[width=0.8\textwidth]{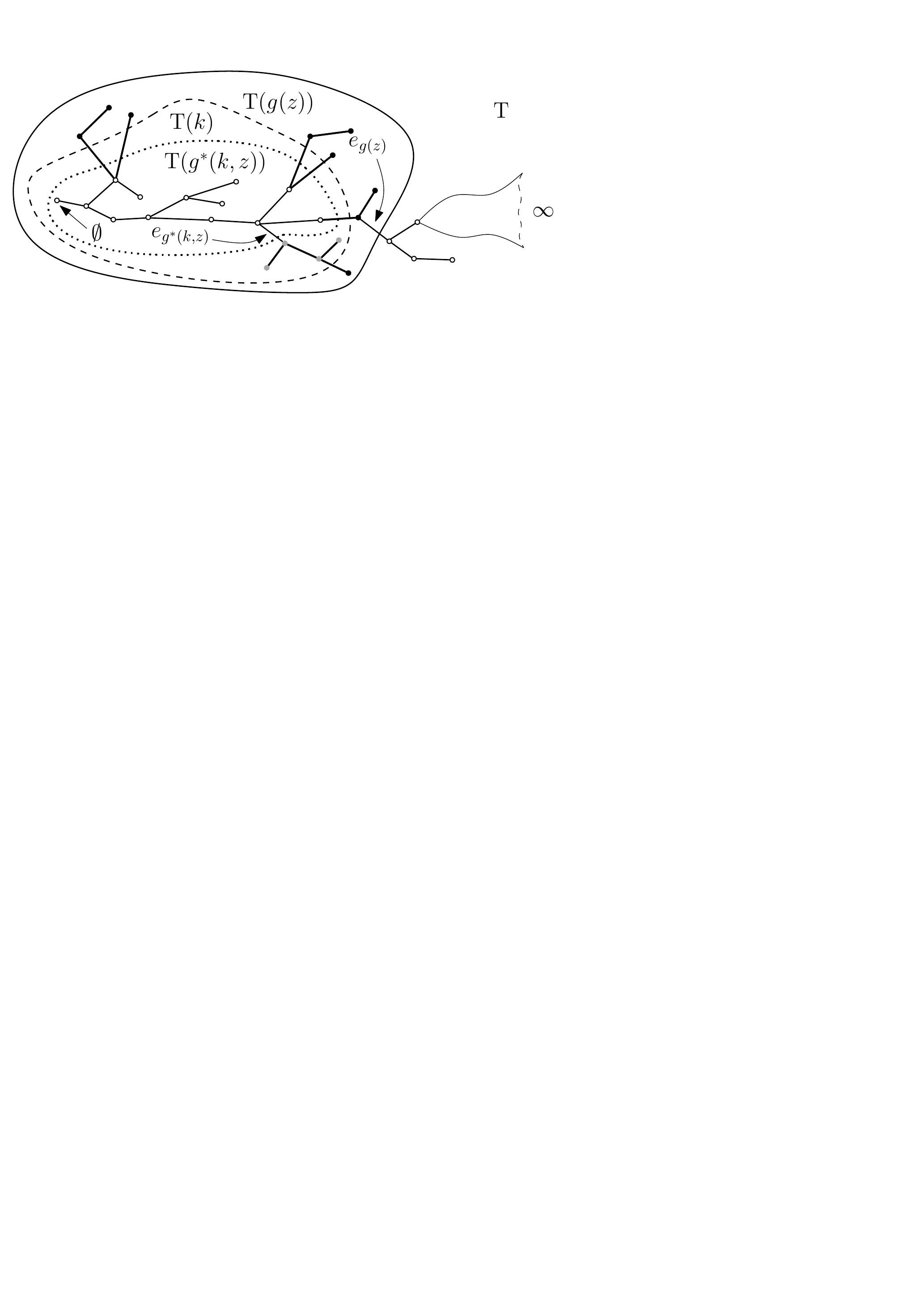}
\caption{
The trees $\rT_{g(z)}$, $\rT_{k}$, and $\rT_{g^*(k,z)}$ are circled with solid, dashed, and dotted lines, respectively. 
Vertices of $\rT_{g(z)}$ not in $\rT_k$ are drawn as solid black dots, and vertices of 
$\rT(k)$ not in $\rT(g^*(k,z))$ are drawn as solid grey dots. Note that the latter form a subtree of $\rT_k$ rooted 
at the far endpoint of $e_{g^*(k,z)}$.
}
\label{fig:abgwdef}
\end{figure}

The definitions of the current paragraph are for the most part illustrated in Figure \ref{fig:abgwdef}. 
For integer $k \ge 1$, 
let $g^*(k,z)=g(k,z,\rT_k)$, so $g^*(k,z)$ is the last time 
Prim's algorithm on $\rU$, started from $\emptyset$, uncovers an edge of weight at least $z$ before time $k$.
We always have $g^*(k,z) \le g(z)$ and so $T_{g^*(k,z)}$ is a subtree of $T_{g(z)}$.  
Furthermore, if  $T_{g^*(k,z)} \ne T_{g(z)}$ then $g(z) > k$ and 
so $T_k$ is a subtree of $T_{g(z)}$. In the latter case, the vertices of 
$T_k$ not in $T_{g^*(k,z)}$ form a subtree of $T_k$ whose path to 
the root passes through $e(g^*(k,z))$. On the other hand, vertices 
of $T_{g(z)}$ not in $T_k$ may in principle attach to any point of $T_{g(z)}$. 

Since $\rT_{g^*(k,z)} \ne \rT_{g(z)}$ implies that $g(z) > k$, 
by (\ref{eq:gzexp}) and Markov's inequality we have 
\begin{equation}\label{eq:abgz1}
\p{\rT_{g^*(k,z)} \ne \rT_{g(z)}} \le \frac{1}{k\theta(z)(1-z^*)}\, .
\end{equation}
Our next aim is to prove an analogous result in the finite setting. 
Recall the definition of $g_n(z,k)$ from (\ref{eq:gzdef}). 
Fix functions $f(n)$ and $k(n)$ with $\log^5 n \le k(n) < f(n)$, with $k(n)=o(\sqrt{n})$, and with $f(n)=o(n)$. 
The quantities $g_n(k(n),z)$ and $g_n(f(n),z)$ will play the roles of $g^*(k,z)$ and $g(z)$, respectively. 

For fixed $z> 1$, if $\rM_{n,g_n(k(n),z)}\ne \rM_{n,g_n(f(n),z)}$, 
then Prim's algorithm adds an edge of weight at least $z$ 
at some step $i$ with $k(n) < i \le f(n)$. The latter implies 
that $g_n(f(n),z) > k(n)$, and since $k(n) \ge \log^5 n$, by (\ref{eq:gzupper}) we then have 
\begin{equation}\label{eq:abgz2}
\p{\rM_{n,g_n(k(n),z)}\ne \rM_{n,g_n(f(n),z)}} \le \p{g_n(f(n),z) > k(n)} \le n^{-99}\, ,
\end{equation}
for $n$ large enough. 

By Proposition~\ref{thm:abgk}, we may couple $\rM_{n,k(n)}$ and $\rT_{k(n)}$ so that 
\[
\p{\rM_{n,k(n)} \ne \rT_{k(n)}} \to 0\, .
\]
Also, whenever $\rM_{n,k(n)} = \rT_{k(n)}$, we have $g^*(k(n),z)=g_n(k(n),z)$ and so 
$\rT(g^*(k(n),z))=\rM_{n,g_n(k(n),z)}$, and it follows that 
\begin{equation}\label{eq:abgz3}
\p{\rT_{g^*(k(n),z)} \ne \rM_{n,g_n(k(n),z)}} \to 0\, .
\end{equation}
Combining (\ref{eq:abgz1}), (\ref{eq:abgz2}), and (\ref{eq:abgz3}), we obtain that 
\[
\p{\rT_{g(z)} \ne \rM_{n,g_n(f(n),z)}} \le \frac{1}{k(n)\theta(z)(1-z^*)} + \frac{1}{n^{99}}+o(1),
\]
which tends to zero as $n \to \infty$, for any fixed $z > 1$. 
Finally, fix an integer $r > 1$. 
If $\sup\{i: X_i > z\} \ge r$ then $B'_{\rT}(\emptyset,r) \subset V(T_{g(z)})$, and it 
follows that 
\[
\p{B'_{\rT}(\emptyset,r) \subseteq V(T_{g(z)})} \ge 1-\p{\sup\{i: X_i > z\} < r}\, .
\]
By the expression (\ref{eq:density}) for the conditional densities of the $X_i$ and the fact that $\theta'(1+\eps)=2+o(\eps)$ as $\eps \downarrow 0$, it easily follows that 
\[
\p{\sup\{i: X_i > 1+3^{-r}\} < r} \to 0
\]
as $r \to \infty$. 
It follows that for any $\eps > 0$, for all $r$ sufficiently large, writing $z=1+1/3^r$ we have 
$\p{\sup\{i: X_i > z\} < r} < \eps$, and for such $r$ 
we obtain that 
\begin{align*}
\p{\rM_{n,f(n)}(r) \ne \rT_{f(n)}(r)} & \le \p{B'_{\rT}(\emptyset,r) \not\subseteq V(T_{g(z)})} 
+ \p{\rT_{g(z)} \ne \rM_{n,g_n(f(n),z)}}  \\
& = \eps + o_{n \to \infty}(1)\, .
\end{align*}
As $\eps > 0$ was arbitrary, this completes the proof. 
\end{proof}

\section{ {\bf Properties of the forward maximal process $((X_n,Z_n),n \ge 1)$}}\label{sec:formax}

\subsection{The tails of the random variables $\{B_y,y > 1\}$.} 
\hspace{0.3cm}

\vspace{0.1cm} \noindent
For $0 < \lambda < 1$, the first two moments of $|\pgw(\lambda)|$ are given by the following simple formulas \cite{borel42emploi}: 
\begin{equation}\label{eq:meanvar}
\E{|\pgw(\lambda)|} = \frac{1}{1-\lambda},\quad \E{|\pgw(\lambda)|^2} = \frac{1}{(1-\lambda)^3}\, .
\end{equation}
Also, from (\ref{eq:borel-tanner}) and Stirling's formula, it follows that 
\[
\p{|\pgw(\lambda)|=m} \sim \frac{e^{1-\lambda}}{(2\pi)^{1/2}}\cdot \frac{1}{m^{3/2}} \cdot (\lambda e^{(1-\lambda)})^{m-1}\, ,
\]
as $m \to \infty$, uniformly in $0 \le \lambda \le 1$. Using explicit error bounds for Stirling's approximation, it is not hard to see that in fact, 
for all $m \ge 1$ and $0 \le \lambda \le 1$, 
\[
\frac{1}{3m^{3/2}} \cdot (\lambda e^{(1-\lambda)})^{m-1} \le \p{|\pgw(\lambda)|=m} \le \frac{3}{m^{3/2}} \cdot (\lambda e^{(1-\lambda)})^{m-1}\, .
\]
We leave the detailed verification of these inequalities to the reader. 
Using that $\lambda e^{1-\lambda} \le e^{-(1-\lambda)^2}$ for all 
$0 \le \lambda \le 1$, 
and that $\lambda e^{1-\lambda} \ge e^{-2(1-\lambda)^2}$ for $\lambda$ sufficiently close to $1$, it follows
for all $m \ge 1$ and for all $0 \le \lambda \le 1$, 
\begin{equation}\label{eq:pgwlambdabound}
\p{|\pgw(\lambda)|=m} \le \frac{3}{m^{3/2}} \cdot e^{-m(1-\lambda)^2}. 
\end{equation}

Next recall (from Section~\ref{sec:pwag}) the definition of the {\em dual parameter} $u^*$: for $u > 1$, $u^*$ is the unique 
value $\lambda \in (0,1)$ with $ue^{-u}=\lambda e^{-\lambda}$. It is straightforward to check that $\pgw(u)$, 
conditional on the event that $|\pgw(u)|<\infty$, has the same distribution as $\pgw(u^*)$. 
It is also easily checked that $(1+\eps)^*=1-\eps+o(\eps)$ as $\eps \downarrow 0$. 

Fix $y> 1$ and let $B_y$ be as in (\ref{eq:z_cond_dist}). 
We next state probability bounds for the upper and lower tails of $B_y$ 
when $y$ is near $1$. 
\begin{lem}\label{lem:bybounds}
There is $y_0 > 1$ such that for all $1 < y < y_0$ and any constants $0 < c < 1 < C$, we have 
\[
\p{B_y \le \frac{c}{(y-1)^2}} \le 8c^{1/2}, \qquad \p{B_y \ge \frac{C}{(y-1)^2}} \le 20e^{-C/8}. 
\]
\end{lem}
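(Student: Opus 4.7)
The plan is to reduce the two tail bounds on $B_y$ to a single pointwise estimate on $\p{B_y=m}$ and then sum. Starting from (\ref{eq:z_cond_dist}) I rewrite
\[
\p{B_y=m} \;=\; \frac{\theta(y)}{\theta'(y)} \cdot m \cdot \p{|\pgw(y)|=m}.
\]
Since $\lambda e^{-\lambda}$ is invariant under $\lambda\leftrightarrow\lambda^*$, a short computation gives $\p{|\pgw(y)|=m}=(1-\theta(y))\p{|\pgw(y^*)|=m}\le \p{|\pgw(y^*)|=m}$, and since $y^*\in(0,1)$ the bound (\ref{eq:pgwlambdabound}) applies to yield
\[
\p{|\pgw(y)|=m} \;\le\; \frac{3}{m^{3/2}} e^{-m(1-y^*)^2}.
\]

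Next I would pin down two local asymptotics near $y=1$. Differentiating the implicit relation $\theta(y)=1-e^{-y\theta(y)}$ gives $\theta'(y)\to 2$ and $\theta(y)=2(y-1)+O((y-1)^2)$ as $y\downarrow 1$; together with the concavity of $\theta$ (which yields the one-sided bound $\theta(y)\le 2(y-1)$), this lets me choose $y_0>1$ so that $\theta(y)/\theta'(y)\le 2(y-1)$ on $(1,y_0)$. Expanding both sides of $ye^{-y}=y^*e^{-y^*}$ to second order in $y-1$ and $1-y^*$ gives $1-y^*=(y-1)+O((y-1)^2)$, so, shrinking $y_0$ if necessary, I may also assume $(1-y^*)^2\ge (y-1)^2/4$ on $(1,y_0)$. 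Plugging these into the previous display yields the working estimate
\[
\p{B_y=m} \;\le\; \frac{6(y-1)}{m^{1/2}} \, e^{-m(y-1)^2/4},\qquad 1<y<y_0.
\]

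With this pointwise bound in hand, both tail inequalities follow by elementary summation. For the lower tail I drop the exponential factor and use $\sum_{m=1}^{M} m^{-1/2}\le 2M^{1/2}$ with $M=\lfloor c/(y-1)^2\rfloor$, obtaining a bound proportional to $c^{1/2}$. For the upper tail I set $M=\lceil C/(y-1)^2\rceil$, use $m^{-1/2}\le(y-1)/C^{1/2}$ uniformly for $m\ge M$, pull this factor out of the sum, and estimate the remaining geometric series by $\sum_{m\ge M}e^{-ma}\le e^{-Ma}/(1-e^{-a})$ with $a=(y-1)^2/4$; combining with $1-e^{-a}\ge a/2$ for small $a$ yields a bound proportional to $e^{-C/4}/C^{1/2}$.

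The main obstacle is purely numerical book-keeping to match the explicit constants $8$ and $20$ in the statement: my rough pass above yields $12\, c^{1/2}$ and $48\,e^{-C/4}/C^{1/2}$, which are of the right form but with worse constants. To close the gap I would sharpen the prefactor $3$ in (\ref{eq:pgwlambdabound}) near $\lambda=1$ (via a careful Stirling expansion, this constant can be taken close to $1/\sqrt{2\pi}$), choose $y_0$ close enough to $1$ that $\theta(y)/[\theta'(y)(y-1)]$ and $(1-y^*)^2/(y-1)^2$ are both close to $1$, and, for the range of small $C$ where the upper-tail bound would degenerate, invoke the trivial estimate $\p{B_y\ge C/(y-1)^2}\le 1$ to absorb the factor $C^{-1/2}$ while converting $e^{-C/4}$ to $e^{-C/8}$.
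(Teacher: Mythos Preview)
Your approach is essentially the same as the paper's: rewrite $\p{B_y=m}$ as a prefactor of order $y-1$ times $m\cdot\p{|\pgw(y^*)|=m}$, invoke the Stirling bound (\ref{eq:pgwlambdabound}) to obtain a pointwise estimate of the form $\mathrm{const}\cdot(y-1)\,m^{-1/2}e^{-m(y-1)^2/\mathrm{const}}$, and then sum. The paper packages the prefactor as $c_y=\theta(y)y^*/(y\theta'(y))$, you package it as $\theta(y)/\theta'(y)$ after dropping $(1-\theta(y))$; both are $\sim y-1$, so this is purely cosmetic. Your plan for matching the explicit constants (sharpen the Stirling constant near $\lambda=1$, take $y_0$ close to $1$, and use the trivial bound for moderate $C$, noting that $20e^{-C/8}\ge 1$ for $C\le 8\ln 20$) is sound and would close the numerical gap without difficulty.
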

\begin{proof}
Writing $c_y = \theta(y) y^*/ y \theta'(y)$, 
straightforward calculation shows that we may rewrite the probability $\p{B_y=k}$ as 
\begin{align}
\p{B_y=k} & = \frac{\theta(y) y^*}{ y \theta'(y)} \cdot k \cdot \frac{e^{-y^*k}(y^*k)^k-1}{k!} \nonumber\\
		& = c_y \cdot k \cdot \p{|\pgw(y^*)|=k}\, .\label{eq:byform}
\end{align}
Since $\pgw(y^*)$ is distributed as $\pgw(y)$ conditioned to be finite, (\ref{eq:byform}) may explain why we referred 
to the law of $B_y$ as a truncated, size-biased version of the law of $\pgw(y)$. 

The above asymptotics for $\theta(y)$, $\theta'(y)$ and $y^*$ near $y=1$ yield that 
$c_y \sim y-1$ as $y \downarrow 1$. 
The above upper bound for the tails of the random variables $|\pgw(\lambda)|$ 
then implies that for all $y \ge 1$ and $k \ge 1$, 
\[
\p{B_y = k} \le \frac{3c_y}{k^{1/2}} e^{-k(1-y^*)^2}
\]
Using (\ref{eq:byform}) and the $y \downarrow 1$ asymptotic for $c_y$, we obtain that for $y$ sufficiently close to $1$ and any 
$0 < c < 1$, 
\begin{align}
\p{B_y \le \frac{c}{(y-1)^2}} & \le 3 c_y \sum_{i \le c/(y-1)^2} \frac{1}{i^{1/2}} \nonumber\\
					& < 7 c_y (\lfloor c/(y-1)^2\rfloor)^{1/2} \nonumber \\
					& < 8 c^{1/2}\, .\label{eq:bysizelb}
\end{align}
Similarly, assuming $y_0$ is chosen small enough that $(1-y_0^*)/(y_0-1) \ge 1/2$, 
we have that for all $1 < y < y_0$ and for all $C > 1$, 
\begin{align}
\p{B_y \ge \frac{C}{(y-1)^2}} & \le 3 c_y \sum_{i \ge C/(y-1)^2} \frac{1}{i^{1/2}} \exp\pran{-\frac{i}{2(y-1)^2}} \nonumber \\
					& < 7c_y (\lfloor C/(y-1)^2\rfloor)^{1/2} \frac{e^{-C/2}}{(1-e^{-C/2})^2} \nonumber \\
					& < 20 C^{1/2} e^{-C/2} < 20 e^{-C/8}\, . \label{eq:bysizeub}
\end{align}
\end{proof}

\subsection{The growth and decay of $(X_n,n \ge 1)$ and of $(Z_n, n \ge 1)$.}\label{sec:halving}

In this section we state three lemmas that will be used in 
Recall from Section~\ref{sec:wwlprim} that for $n \ge 1$, the conditional density of $X_{n+1}$ given 
that $X_n=x$ is given by $f(y)=\theta'(y)/\theta(x)$ for $y \in (1,x)$, where 
$\theta(\lambda)=\p{|\pgw(\lambda)|=\infty}$. 
In other words, under this conditioning, $X_{n+1}$ is distributed as $X_1$ conditioned to satisfy $X_1 \le x$. 
The function $\theta$ satisfies 
\[
\theta'(1+\eps) = 2 + o(\eps),
\]
as $\eps \downarrow 0$, and so $\theta(1+\eps) = 2\eps + o(\eps)$. It follows that for large $n$, the ratios $(X_{n+1}-1)/(X_n-1)$ are approximately distributed as Uniform$[0,1]$ random variables, and so typically the difference $(X_{n}-1)$ should decrease by a factor two after increasing $n$ by a constant amount. The next lemma bounds the probability of seeing ``halving times'' that are substantially longer. 

For $z > 1$, let $I(z) = \min\{i: X_i \le z\}$. \nomenclature[Iz]{$I(z)$}{Index of first forward maximal weight $X_i$ with $X_i \le z$.}
Let $x_1>1$ be such that $\theta'(x_1)=1$; 
since the function $\theta$ is concave, such $x_1$ is unique. 
\nomenclature[X1]{$x_1$}{Unique value for which $\theta'(x_1)=1$.}
For $\eps > 0$, write 
\[
H_{\eps} = I(x_1) \vee \max \{k: \exists n, 1+\eps \le X_{n+k} < x_1, X_{n+k}-1 \ge (X_{n+1}-1)/2\}\, .
\]
In words, $H_{\eps}$ is the greatest number of steps required for the difference $(X_n-1)$ to fall below $x_1$, or to reduce 
by a factor of two once below $x_1$, before $X_n$ drops below $1+\eps$. The next lemma provides probability bounds 
on the upper tail of $H_{\eps}$. 

\begin{lem}\label{lem:halvingtimes}
There is an absoute constant $\cc\ccdef\ccht > 0$ such that for all $\eps > 0$ and $k > 1$, we have 
\[
\p{H_{\eps} > k} < \log_2(2/\eps) \cdot e^{-\ccht k}. 
\]
\end{lem}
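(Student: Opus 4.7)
The plan is to decompose $\{H_\eps > k\}$ into two pieces that each exhibit geometric-in-$k$ tails: (i) the initial waiting time $I(x_1)$ until $X_n$ first drops below $x_1$, and (ii) the successive ``halving epochs'' during which $(X_n-1)$ must halve repeatedly on its way down past $\eps$. A union bound over the at-most-$\log_2(2/\eps) + O(1)$ halvings needed to reach $1+\eps$ from $x_1$ then supplies the logarithmic prefactor. Here we use that $x_1 < 2$ (so that $x_1 - 1 < 1$), which follows from concavity of $\theta$ together with $\theta'(2) < 1$.

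For (i), the conditional density (\ref{eq:density}) gives, for $x > x_1$,
\[
\p{X_{n+1} > x_1 \mid X_n = x} = 1 - \theta(x_1)/\theta(x) \le 1 - \theta(x_1),
\]
so $\p{I(x_1) > k} \le (1-\theta(x_1))^k$.

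For (ii), define stopping times $\tau_0 := I(x_1)$ and $\tau_j := \inf\{n > \tau_{j-1}: X_n - 1 \le (X_{\tau_{j-1}}-1)/2\}$. Since $X_{\tau_j} - 1 \le (x_1-1)/2^j$, only epochs $j \le J := \lceil\log_2((x_1-1)/\eps)\rceil \le \log_2(2/\eps) + O(1)$ have $X_{\tau_{j-1}} \ge 1+\eps$. Given $X_{\tau_{j-1}} = x$ and $X_m = x' \in ((x+1)/2, x]$ for some $m \in [\tau_{j-1}, \tau_j)$, the conditional density gives
\[
\p{X_{m+1} \le (x+1)/2 \mid X_m = x'} = \frac{\theta((x+1)/2)}{\theta(x')} \ge \frac{\theta((x+1)/2)}{\theta(x)} \ge \beta,
\]
where $\beta := \inf_{x \in (1, x_1]} \theta((x+1)/2)/\theta(x)$. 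The main technical point is to show $\beta > 0$: continuity and positivity on $(1, x_1]$ are clear, and the asymptotic $\theta(1+\delta) = 2\delta + O(\delta^2)$ (obtained by expanding the fixed-point equation $\theta = 1 - e^{-\lambda\theta}$) yields $\lim_{x \to 1^+} \theta((x+1)/2)/\theta(x) = 1/2$, so the ratio extends continuously to $[1, x_1]$ with attained, strictly positive infimum. Hence each $\tau_j - \tau_{j-1}$ is stochastically dominated by a Geometric$(\beta)$ random variable.

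Finally, I would relate $H_\eps$ to these pieces. If a non-halving window $[n+1, n+k]$ has $n+1 \in [\tau_{j^*-1}, \tau_{j^*})$ for some $j^* \le J$, then $X_{n+1} \ge X_{\tau_{j^*}}$, and monotonicity gives $X_{\tau_{j^*+1}} - 1 \le (X_{\tau_{j^*}}-1)/2 \le (X_{n+1}-1)/2$; hence halving from $X_{n+1}$ has occurred by $\tau_{j^*+1}$, so the window length is at most $\tau_{j^*+1} - \tau_{j^*-1} \le 2\max_j(\tau_j - \tau_{j-1})$. Windows straddling $I(x_1)$ are split into pre- and post-$I(x_1)$ parts, each bounded by the corresponding piece up to a factor of two. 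Combining,
\[
\p{H_\eps > k} \le \p{I(x_1) > k/2} + (J+1)(1-\beta)^{k/4} \le (1-\theta(x_1))^{k/2} + (\log_2(2/\eps) + O(1))(1-\beta)^{k/4},
\]
and a suitably small choice of $c_{\text{ht}} > 0$ absorbs the additive $O(1)$ and yields the claim. The main obstacle is establishing $\beta > 0$, which hinges on the delicate singular behavior of $\theta$ at $x = 1$; the rest of the argument is a routine combination of geometric decay and a union bound.
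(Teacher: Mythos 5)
Your proposal is correct and follows essentially the same route as the paper: bound the tail of $I(x_1)$ by $(1-\theta(x_1))^k$, observe that only about $\log_2(1/\eps)$ halving scales occur before $X_n$ drops below $1+\eps$, show each step below $x_1$ halves $X_n-1$ with probability bounded away from zero, and conclude by a union bound with geometric decay. The only (cosmetic) difference is that you lower-bound the per-step halving probability by $\beta=\inf_{x\in(1,x_1]}\theta((x+1)/2)/\theta(x)>0$ via continuity and the asymptotic $\theta(1+\delta)\sim 2\delta$, and organize the halvings by stopping times, whereas the paper counts indices in fixed dyadic windows and gets an explicit constant from the bounds $1\le\theta'\le 2$ on $(1,x_1]$ (and derives $x_1\le 2$ directly from concavity, $x_1-1\le\theta(x_1)\le 1$, rather than from the unproved assertion $\theta'(2)<1$).
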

\begin{proof}
First, for each $i \ge 1$, we have 
\[
\p{I(x_1) > i} \le \prod_{j=1}^i \sup_{x > x_1} \p{X_{i+1} > 2 | X_i=x} \le \prod_{j=1}^i \p{X_1 > x_1}  \le (1-\theta(x_1))^{i}. \\
\]
Now fix $\eps >0$ and $k > 1$. By our choice of $\ccht$ we may assume that $\eps$ is small, and in particular that $\eps < x_1-1$. 
Note that since $\theta$ is concave, $\theta(x_1) \ge (x_1-1)\theta'(x_1)=x_1-1$ and so necessarily $x_1 \le 1$. 
It follows that if $H_{\eps} > k$ then either $I(x_1) > k/2$ or else for some $1 \le i \le \log_2(1/\eps)$ we have 
\begin{equation}\label{eq:preceq}
\#\{j: \theta(x_1)/2^{i+1} < X_j \le \theta(x_1)/2^i\} > k/2\, .
\end{equation}
Temporarily write $j_0$ for the first $j$ for which $X_j-1 \le \theta(x_1)/2^i$. If (\ref{eq:preceq}) is 
to hold then we must in particular have $X_{j_0+\ell+1}-1 > (X_{j_0+\ell}-1)/2$ for each $1 \le \ell \le k/2$. 
By the Markov property the probability of the event in (\ref{eq:preceq}) is therefore at most 
\[
\left(\sup_{0 < x \le x_1-1} \p{X_{j+1}-1 \ge x/2~|~X_j-1=x}\right)^{k/2}\, .
\]
Since $x \le x_1$ and $\theta'(x_1)=1$, by convexity, for all $1 < x' \le x$ we have $1 \le \theta'(x) < 2$. Since the conditional density of $X_{i+1}$ at $x'$ given that $X_i=x$ is proportional to $\theta'(x')$, it follows that $\p{X_{i+1}-1 \ge x/2~|~X_i-1=x} \le 2/3$, and so 
\[
\p{\#\{j: \theta(x_1)/2^{i+1} < X_j \le \theta(x_1)/2^i\} > k/2}\le (2/3)^{k/2}\, .
\]
By a union bound it follows that 
\[
\p{H_{\eps} > k} \le (1-\theta(x_1))^{k/2} + \log_2(1/\eps)(2/3)^{k/2},
\]
and takng $\delta$ small enough that $e^{-\delta/2} > \max(1-\theta(x_1),2/3)$ completes the proof. 
\end{proof}
Later, we will also need the following tail bound on the total number of vertices in trees whose forward maximal weight is above a given threshold. 
\begin{lem}\label{lem:zisizes}
There exist constants $\cc\ccdef\cczi,\CC\CCdef\CCzi > 0$ such that 
for all $r > 1$ and $x > 1$, 
\[
\p{\sum_{i: X_i > 1+1/r} Z_i > xr^2} \le \CCzi \log r\cdot e^{-\cczi x^{1/2}}\, .
\]
\end{lem}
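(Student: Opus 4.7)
The plan is to dyadically decompose the range of $X_i$ values and bound each piece separately. Fix $y_0 > 1$ as in Lemma~\ref{lem:bybounds} (we may assume $y_0 \le x_1$), let $K = \lceil \log_2 r\rceil$, and partition the set $\{i : 1 + 1/r < X_i \le y_0\}$ into dyadic classes $J_k = \{i : X_i - 1 \in (2^{-k-1}, 2^{-k}]\}$, with a separate class $J^* = \{i : X_i > y_0\}$. By Lemma~\ref{lem:bybounds}, conditional on $X_i$, the variable $Z_i \eqdist B_{X_i}$ has sub-exponential tails at scale $1/(X_i-1)^2 \asymp 4^k$ for $i \in J_k$. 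By Lemma~\ref{lem:halvingtimes}, on the event $A_h := \{H_{1/r} \le h\}$ we have $|J_k| \le h$ for every relevant $k$: the indices in $J_k$ are consecutive (since $(X_i)$ is decreasing) and form a run along which $(X_i-1)$ fails to halve, so a run of length $>h$ would force $H_{1/r} > h$. Thus the typical contribution from $J_k$ is $O(h\cdot 4^k)$, giving a typical total of $\sum_k O(h\cdot 4^k) = O(h r^2)$; the threshold $xr^2$ corresponds to $h \sim \sqrt{x}$.

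Concretely, allocate a geometric budget $s_k = (x/c_0)\cdot 4^k$, with $c_0$ chosen so that $\sum_{k \le K} s_k \le xr^2/2$. On $A_h$ it suffices that $\max_{i \in J_k} Z_i \le s_k/h$ for every $k$. Using $(X_i - 1)^2 > 4^{-k-1}$ on $J_k$, Lemma~\ref{lem:bybounds} gives (provided $h < x/c_2$ so that the hypothesis $C > 1$ is satisfied)
\[
\p{Z_i > s_k/h \mid X_i} \le 20 \exp\!\pran{-\frac{s_k(X_i - 1)^2}{8h}} \le 20\, e^{-x/(c_1 h)}.
\]
A union bound over at most $h$ indices in each of the $O(\log r)$ classes yields $\p{\{\sum_{k \le K} S_k > xr^2/2\} \cap A_h} \le O(h \log r)\,e^{-x/(c_1 h)}$, where $S_k = \sum_{i \in J_k} Z_i$. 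Combined with $\p{A_h^c} \le \log_2(2r)\, e^{-\ccht h}$ from Lemma~\ref{lem:halvingtimes} and the choice $h = \alpha \sqrt{x}$ with $\alpha$ small enough to balance the two exponents, this gives an overall bound of the form $O(\sqrt{x}\, \log r)\cdot e^{-c\sqrt{x}}$, which is absorbed into $C\log r \cdot e^{-c' x^{1/2}}$ by slightly decreasing the exponent constant.

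The boundary class $J^*$ is handled separately: $|J^*| \le I(y_0)$ has exponential tails by the argument in the first display of Lemma~\ref{lem:halvingtimes}'s proof, and for each $i \in J^*$, $Z_i$ is sub-exponential with rate bounded away from zero (via the identity (\ref{eq:byform}) together with (\ref{eq:pgwlambdabound}), using that $X_i^* \le y_0^*$ is bounded away from $1$). So $\p{S^* > xr^2/2}$ with $S^* = \sum_{i \in J^*} Z_i$ decays faster than $e^{-cxr^2}$, much stronger than needed. The one technical nuisance — not a real obstacle — is tracking constants so that Lemma~\ref{lem:bybounds}'s hypothesis $C > 1$ is valid simultaneously for every class and every relevant index; this forces a mild upper bound on $h$, which is compatible with $h = \alpha\sqrt{x}$ once we observe that the stated bound is trivially true for $x$ bounded by inflating $\CCzi$.
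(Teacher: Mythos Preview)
Your argument is correct and rests on the same two inputs the paper uses: the tail bound for $B_y$ from Lemma~\ref{lem:bybounds} and the halving estimate from Lemma~\ref{lem:halvingtimes}. The organization differs, however. Rather than your dyadic decomposition, the paper argues more directly: it first bounds $i_0 = I(1+1/r)$ by observing that, once $X_n < x_1$, each step halves $X_n-1$ with probability at least $1/3$, so for $j$ a sufficiently large multiple of $\log r$ one has $\p{i_0 > j} \le (1-\theta(x_1))^{j/2} + e^{-j/48}$ via a binomial tail estimate. It then uses the \emph{uniform} bound $\p{B_{X_i} > xr^2} \le 20 e^{-x/8}$, valid for every $X_i > 1+1/r$ (since then $xr^2 \ge x/(X_i-1)^2$ in Lemma~\ref{lem:bybounds}), to get
\[
\p{\textstyle\sum_{i < i_0} Z_i > jxr^2} \le \p{i_0 > j} + 20j\, e^{-x/8},
\]
and takes $j = x$. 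Your dyadic scheme is more refined in that it tracks the scale of each $Z_i$, but this refinement is not needed here: the crude observation that every $Z_i$ already has an $e^{-x/8}$ tail at level $xr^2$ lets the paper avoid the per-scale bookkeeping entirely. Both routes produce the $\log r$ factor and the $e^{-c\sqrt{x}}$ tail; yours is a valid alternative, just a bit more work than necessary.
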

\begin{proof} 
By adjusting the value of $\CCzi$ we may assume that $x$ is at least 
$j > 12 \log_2 (\theta(x_1)\cdot r)$. 
In the proof of \refL{lem:halvingtimes} we showed that 
$\p{I(x_1) \ge i} \le (1-\theta(x_1))^i$ 
and that 
\[
\inf_{0 < x \le x_1-1} \p{X_{j+1}\le 1+x/2~|X_j=1+x} \ge 1/3\, .
\]
Writing $i_0=I(1+1/r)$, it follows from the above results that for $j \ge 1$, 
\[
\p{i_0 \ge j} \le \p{I(x_1) \ge j/2} + \p{\mathrm{Bin}(\ceil{j/2},1/3) < \log_2 (\theta(x_1)\cdot r)} < (1-\theta(x_1))^{j/2} + e^{-j/48}\, ,
\]
the last inequality holding for $j > 12 \log_2 (\theta(x_1)\cdot r)$. 
It follows that for such $j$, 
\begin{align*}
\p{\sum_{1 \le i < i_0} Z_i > xjr^2}	& \le \p{i_0 > j} + j \p{B_{1+1/r} > xr^2} \\
							& \le (1-\theta(y))^{j/2} + e^{-j/48} + 20j e^{-x/8}\, ,
\end{align*}
the last inequality holding by the upper bound in \refL{lem:bybounds}. 
Taking $j=x$ then proves the lemma. 
\end{proof}

Finally, the following lemma, 
which establishes bounds on the lower tail of $d_{\rT}(\emptyset,R_{i_0})$ and of $X_{i_0}$, 
will be used in proving the upper bound from Theorem~\ref{thm:volume}. 
In its proof, we will use the following explicit formula. Let $\cT^{(n)}$ be a uniformly random tree with vertices 
$\{1,\ldots,n\}$, and let $v_1,v_2$ be independent, uniformly random elements of $\{1,\ldots,n\}$. Then for each $1 \le k \le n-1$ we have 
\begin{equation}\label{eq:cayleydist}
\p{d_{\cT^{(n)}}(v_1,v_2) \ge k} = \prod_{j=1}^k \frac{n-j}{n}\, .
\end{equation}
\begin{lem}\label{lem:pond-dist-lower}
There exists $\CC\CCdef\CCdistlb>0$ such that 
for all $r>1$ with $1+1/r < x_1$, and all $x > 0$, writing $i_0=I(1+1/r)$, we have 
\[
\p{d_{\rT}(\emptyset,R_{i_0}) < xr \mbox{ or } X_{i_0} < x/r} < \CCdistlb x^{2/3}\, .
\]
\end{lem}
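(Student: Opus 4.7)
First observe that since $r > 1$ we have $X_{i_0} > 1 \geq x/r$ whenever $x \leq 1$, so the event $\{X_{i_0} < x/r\}$ is empty in that range; and for $x > 1$ the stated bound $\CCdistlb x^{2/3}$ exceeds $1$ as soon as $\CCdistlb \geq 1$. Thus it suffices to bound $\p{d_{\rT}(\emptyset,R_{i_0}) < xr}$ for $x \in (0,1]$, and the $X_{i_0}$-term of the union is handled trivially.

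The path from $\emptyset$ to $R_{i_0}$ in $T$ necessarily enters the pond $P_{i_0}$ at $S_{i_0}$ and exits at $R_{i_0}$, so $d_{\rT}(\emptyset,R_{i_0}) \geq d_{P_{i_0}}(S_{i_0},R_{i_0})$. By the description of $\rT$ in Section~\ref{sec:wwlprim}, conditionally on $Z_{i_0} = m$ the pond $P_{i_0}$ is uniform on labelled trees with $m$ vertices, and $S_{i_0}, R_{i_0}$ are independent uniform vertices. Formula~(\ref{eq:cayleydist}) and $\prod_{j=1}^{k-1}(1-j/m) \geq 1 - k^2/(2m)$ together yield
\[
\p{d_{P_{i_0}}(S_{i_0},R_{i_0}) < xr \mid Z_{i_0} = m} \leq \frac{(xr)^2}{2m}\, .
\]
For any threshold $M > 0$, truncating on the pond size gives
\[
\p{d_{\rT}(\emptyset,R_{i_0}) < xr} \leq \p{Z_{i_0} < M} + \frac{(xr)^2}{2M}\, .
\]

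I bound $\p{Z_{i_0} < M}$ by conditioning on $X_{i_0} = 1 + s/r$. Using the conditional density $\theta'(y)/\theta(1+1/r)$ of $X_{i_0}$ on $(1, 1+1/r]$ together with the asymptotics $\theta(1+\eps) = 2\eps + o(\eps)$ and $\theta'(1+\eps) = 2 + o(\eps)$, the induced density of $s = (X_{i_0}-1)r$ on $(0,1]$ tends to $1$ as $r \to \infty$. Since $Z_{i_0} \mid X_{i_0} = 1+s/r$ is distributed as $B_{1+s/r}$, \refL{lem:bybounds} applied with $c = Ms^2/r^2$ gives $\p{Z_{i_0} < M \mid X_{i_0} = 1+s/r} \leq 8 s \sqrt{M}/r$ provided $M \leq r^2/s^2$. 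Integrating over $s \in (0,1]$ against its near-uniform density yields $\p{Z_{i_0} < M} \leq (1+o(1)) \int_0^1 8 s \sqrt{M}/r\, ds \lesssim \sqrt{M}/r$ for $r$ sufficiently large.

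Combining the two estimates, $\p{d_{\rT}(\emptyset,R_{i_0}) < xr} \lesssim \sqrt{M}/r + (xr)^2/M$; the choice $M \asymp x^{4/3} r^2$ (which lies in the valid range $M \leq r^2$) balances the two terms, and each is then of order $x^{2/3}$. This gives the claimed bound for all $x \in (0,1]$, and hence by the first paragraph for all $x > 0$. The main technical obstacle is converting the near-uniform law of $X_{i_0} - 1$ on $(0, 1/r]$ into a pond-size tail estimate $\p{Z_{i_0} < M} \lesssim \sqrt{M}/r$ with constants uniform in $r$, $M$, and $x$; once that is in hand, the $x^{2/3}$ exponent falls out cleanly from the truncation-optimization trade-off.
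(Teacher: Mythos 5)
Your handling of the distance term is essentially the paper's own argument: bound $d_{\rT}(\emptyset,R_{i_0})$ below by $d_{P_{i_0}}(S_{i_0},R_{i_0})$, apply (\ref{eq:cayleydist}) conditionally on $Z_{i_0}=m$, truncate on the pond size via the lower-tail bound of \refL{lem:bybounds}, and optimize; your threshold $M\asymp x^{4/3}r^2$ is exactly the paper's choice (there the cutoff is $Z_{i_0}\le c^4r^2$ with $x=c^3$), and both routes give the exponent $2/3$.

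There is, however, one point you should not wave away. The clause ``$X_{i_0}<x/r$'' that you declare vacuous is a typo in the statement: the intended event is $X_{i_0}<1+x/r$, which is how the lemma is invoked in Section~\ref{sec:volgorm} (the event $E_r$ there involves $X_{I(1+1/r)}\le 1+1/(r\log^2 r)$). The paper's first step supplies precisely this estimate: given $i_0$, the density of $X_{i_0}$ on $(1,1+1/r)$ is $\theta'(u)/\theta(1+1/r)$, and since $1\le\theta'(u)\le 2$ there (this is where the hypothesis $1+1/r<x_1$ enters, and by concavity $\theta(1+1/r)\ge 1/r$), one gets $\p{X_{i_0}<1+x/r}\le 2x\le 2x^{2/3}$. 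Your proof of the literal statement is fine, but it does not contain the content the paper actually needs; the repair is this one line.

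A smaller uniformity issue: your bound $\p{Z_{i_0}<M}\lesssim \sqrt{M}/r$ is obtained by integrating the conditional bound $8s\sqrt{M}/r$ against a density you only control asymptotically (``tends to $1$ as $r\to\infty$''), which restricts your conclusion to $r$ sufficiently large, whereas the lemma is asserted for all $r>1$ with $1+1/r<x_1$. This is easily fixed, either by bounding the density of $s$ by $2$ uniformly (again using $\theta'\le 2$ and $\theta(1+1/r)\ge 1/r$), or by the paper's shortcut, which avoids the integration entirely: since $X_{i_0}-1\le 1/r$, one has $\{Z_{i_0}\le M\}\subseteq\{B_{X_{i_0}}\le (M/r^2)/(X_{i_0}-1)^2\}$, so \refL{lem:bybounds} applies conditionally with $c=M/r^2$ and yields $\p{Z_{i_0}\le M}\le 8\sqrt{M}/r$ directly, uniformly in $r$.
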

\begin{proof}
First, given $i_0$, the density of $X_{i_0}$ at $u \in (1,1+1/r)$ is $\theta'(u)/\theta(1+1/r)$. Since $\theta'(u) \in [1,2]$ for all $u \in (1,1+1/r)$, 
it follows that for all $0 < \eps < 1$ we have $\p{X_{i_0} < 1+\eps/r} < 2\eps$. 

Next, by the lower bound in \refL{lem:bybounds} we immediately have 
\[
\p{Z_{i_0} \le x^2r^2} \le 8x. 
\]
On the other hand, $(P_{i_0},R_{i_0},S_{i_0})$ is distributed as a uniformly random tree, together with two independent, uniformly random vertices, conditional on its size $Z_{i_0}$. By (\ref{eq:cayleydist}) we thus have 
\begin{align*}
\p{Z_{i_0} > x^2r^2, d_{P_{i_0}}(R_{i_0},S_{i_0}) \le cxr} 
& \le 1- \prod_{j=1}^{\floor{cxr}} \frac{x^2r^2-j}{x^2r^2} \\
& \le 1- \exp\pran{-2\sum_{j=1}^{\floor{cxr}} j/(x^2r^2)} \\
& \le 1-e^{-c^2} \\
& \le c^2\, ,
\end{align*}
the second inequality holding as long as $c < 1/4$, say. For $c>0$ sufficiently small, by taking $x=c^{2}$, it follows from these bounds that 
\[
\p{d_{P_{i_0}}(R_{i_0},S_{i_0}) \le c^3r} \le 9c^{2}\, .
\]
Since $d_{\rT}(\emptyset,R_{i_0}) \ge d_{P_{i_0}}(R_{i_0},S_{i_0})$, the result follows. 
\end{proof}

\section{ {\bf Volume growth in $\rT$: a proof of Theorem~\ref{thm:tvolume}}}\label{sec:volgor}
In the preceding section, Lemma~\ref{lem:zisizes} proved upper tail bounds for the total size of the ''forward maximal clusters'' added by invasion percolation before a given forward maximal edge. In order to prove Theorem~\ref{thm:tvolume}, we need a similar bound for the total {\em diameter} of such clusters. We first prove the requisite bound, then turn to the proof of Theorem~\ref{thm:tvolume}. 

\subsection{The diameters of the trees $(P_i,i \ge 1)$}
The subtrees $(P_i,i \ge 1)$ of $T$ were defined in Section~\ref{sec:wwlprim}. 
For $i \ge 1$, the tree $P_i$ is distributed as a uniformly random labelled tree with $Z_i$ vertices. 
A variety of authors \cite{szekeres83dist,flajolet92height,luczak95trees,addario12tail} have studied the tail behavior of the diameter of uniformly random trees; we will use the following uniform sub-Gaussian estimate from \cite{addario12tail}. Given a finite graph 
$G$, write $\diam(G)$ for the diameter of $G$. 
\begin{thm} \label{thm:treeheight} \stepcounter{CC}
There exist absolute constants $\cc\ccdef\ccheight,\CC\CCdef\CCheight>0$ such that for all $n \ge 1$, if $T^{(n)}$ is a uniformly random tree 
on labelled vertices $\{1,\ldots,n\}$ then for all $x>0$,
\[
\p{\diam(T^{(n)})\ge x\sqrt{n}} \le \CCx e^{-\ccx x^2}.
\]
\end{thm}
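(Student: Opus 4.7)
The plan is to reduce the diameter bound to a concentration estimate for the maximum of a random walk excursion. Since the diameter of any tree is bounded by twice the height from any fixed root, and since $T^{(n)}$ is exchangeable in its labels, it suffices to bound the upper tail of the height $\operatorname{ht}(T^{(n)})$ of $T^{(n)}$ rooted at vertex~$1$. By a classical identity already used elsewhere in the paper (see~\eqref{eq:borel-tanner}), the law of $T^{(n)}$ rooted at vertex~$1$, after forgetting labels, is that of a $\mathrm{Poisson}(1)$ Galton--Watson tree conditioned on total progeny~$n$.

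Next I would encode this conditioned tree by its Lukasiewicz walk $(S_k)_{0 \le k \le n}$. Under the unconditioned law the increments are i.i.d.\ copies of $\xi - 1$ with $\xi \sim \mathrm{Poisson}(1)$ (mean zero, variance one, with a finite exponential moment), and the conditioning corresponds to the excursion event $\{S_k \ge 0 \text{ for } 0 \le k < n, \; S_n = -1\}$. The standard tree-walk dictionary gives $\operatorname{ht}(T^{(n)}) \le 1 + \max_{0 \le k \le n} S_k$, so the theorem reduces to the uniform sub-Gaussian bound
$$
\p{\max_{0 \le k \le n} S_k \ge x \sqrt{n} \;\Big|\; \text{excursion at time } n} \le C \exp(-c x^2).
$$
For the unconditioned walk, Doob's maximal inequality applied to the martingale $\exp(\lambda S_k - k \psi(\lambda))$ (with $\psi$ the log moment generating function of $\xi - 1$), optimized over $\lambda$, yields an unconditional sub-Gaussian tail $\p{\max_{0 \le k \le n} S_k \ge x\sqrt{n}} \le \exp(-c_0 x^2)$ with an absolute constant $c_0 > 0$.

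The main obstacle is the passage from walk to excursion: the conditioning event has probability only $\Theta(n^{-3/2})$ (by Kemperman's cycle lemma combined with the local central limit theorem), so a naive conditional bound would destroy the Gaussian factor whenever $x = O(\sqrt{\log n})$. The standard remedy is a mid-point decomposition: cut the walk at time $\lfloor n/2 \rfloor$, use the Markov property together with the reversal symmetry of the increment law, and apply a local central limit estimate for $S_{\lfloor n/2 \rfloor}$ in a $\sqrt{n}$-window. This shows that conditioning on the excursion event changes the tail probability by a multiplicative factor bounded uniformly in $n$ and $x$. The sub-Gaussian constants $c, C$ in the statement are then obtained by combining the unconditioned tail with this uniform comparison factor, absorbing small-$x$ behaviour into $C$ via the trivial bound $\p{\cdot} \le 1$.
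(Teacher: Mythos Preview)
The paper does not prove this theorem; it is quoted from \cite{addario12tail} as a known input, so there is no proof in the paper to compare against.

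Your proposed argument, however, has a real gap. The claimed inequality $\operatorname{ht}(T^{(n)}) \le 1 + \max_{0 \le k \le n} S_k$ is false in general: the maximum of the Lukasiewicz walk does not bound the height of the tree. Take the path on $n$ vertices rooted at an endpoint. Every non-leaf vertex has exactly one child, so the increments of $S$ are $0,\ldots,0,-1$, giving $\max_k S_k = 0$, while the height is $n-1$. What $\max_k S_k$ controls is essentially the \emph{width} of the tree (the maximal size of the depth-first stack), not the depth. The depth of the $k$-th vertex in depth-first order is instead the number of weak right-minimum records of $S$ on $\{0,\ldots,k\}$, a far more delicate functional of the walk than its running maximum. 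Consequently your Doob-plus-midpoint argument, as written, yields a sub-Gaussian tail for the width of $T^{(n)}$, not for its height or diameter.

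The proof in \cite{addario12tail} does pass through the Lukasiewicz encoding, but the link between the walk and the height uses precisely this records description together with a cycle-lemma/rotation argument, not a direct comparison with $\max_k S_k$. If you want to repair your approach, you would need to replace the incorrect inequality by a genuine control of the height process in terms of the excursion, which is where all the work lies.
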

The random variable $Z_i$ is distributed as $B_{Z_i}$, so typically has size of order $(Z_i-1)^{-2}$, and the tree $P_i$ should 
therefore have diameter of order $(Z_i-1)^{-1}$. The next proposition essentially states that the sum of the diameters of the trees $P_1,\ldots,P_i$ is unlikely to be much larger than the diameter of the final tree $P_i$. 
\begin{prop}\label{prop:diamsum}
There exist constants $\cc\ccdef\ccdiam,\CC\CCdef\CCdiam > 0$ such that for all $x > 1$ and all $r > 1$, 
\[
\p{\sum_{i: X_i > 1+1/r} \diam(P_i) \ge x r} \le \CCdiam \log r \cdot e^{-\ccdiam x^{1/2}}\, .
\]
\end{prop}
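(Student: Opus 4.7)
Plan: I would decompose $\sum_{i : X_i > 1 + 1/r} \diam(P_i)$ according to the dyadic scale of $X_i - 1$. Set $k_0 = \lceil \log_2(1/(x_1-1))\rceil$ and $K = \lceil \log_2 r \rceil$, and for $k_0 \le k \le K$ let $I_k = \{i : X_i - 1 \in (1/2^{k+1}, 1/2^k]\}$ and $D_k = \sum_{i \in I_k} \diam(P_i)$. Since the $X_i$ are strictly decreasing, each $I_k$ is a contiguous block. The indices $i$ with $X_i > x_1$ are handled separately: there are at most $I(x_1)$ of them, and both $I(x_1)$ and (uniformly in $X_i > x_1$) $B_{X_i}$ have finite exponential moments, so by \refT{thm:treeheight} this contribution is sub-exponential in $xr$ and is negligible compared to the target bound.

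For each scale $k \ge k_0$ I would establish two inputs. First, using the computation in the proof of \refL{lem:halvingtimes} that $\p{X_{i+1} - 1 \ge (X_i-1)/2 \mid X_i} \le 2/3$ whenever $X_i - 1 \le x_1 - 1$: since $i, i+1 \in I_k$ forces $X_{i+1} - 1 \ge 1/2^{k+1} \ge (X_i-1)/2$ and $k \ge k_0$ ensures $X_i - 1 \le x_1 - 1$, the length $|I_k|$ is stochastically dominated by a geometric random variable with parameter $1/3$, so $\p{|I_k| > J} \le (2/3)^J$ and in particular $\E{|I_k|} \le 3$. Second, for $X_i = y$ at scale $k$, splitting $\{\diam(P_i) > t\}$ according to whether $Z_i$ exceeds the threshold $C/(y-1)^2$, applying \refL{lem:bybounds} to the first piece and \refT{thm:treeheight} (conditionally on $Z_i$) to the second, and choosing $C$ proportional to $t(y-1)$ to balance the two exponentials, yields
\[
\p{\diam(P_i) > t \mid X_i = y} \le C_1 e^{-c_1 t(y-1)} \le C_1 e^{-c_1 t / 2^{k+1}}.
\]

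Combining these via $\{D_k > s\} \subseteq \{|I_k| > J\} \cup \{\max_{i \in I_k} \diam(P_i) > s/J\}$ and a union bound over the at most $|I_k|$ summands (using $\E{|I_k|} \le 3$) gives
\[
\p{D_k > s} \le (2/3)^J + 3 C_1 e^{-c_1 s / (2^{k+1} J)}.
\]
With the choice $s = (x/8)\, 2^k$ and $J = \lceil \sqrt{x}\, \rceil$, both terms become $\exp(-\Omega(\sqrt{x}))$, so $\p{D_k > (x/8) 2^k} \le C_2 e^{-c_2 \sqrt{x}}$ uniformly in $k_0 \le k \le K$. Since $\sum_{k=k_0}^K (x/8)\, 2^k \le (x/8) \cdot 2^{K+1} \le xr/2$, a union bound yields $\p{\sum_k D_k > xr/2} \le (K - k_0 + 1) C_2 e^{-c_2 \sqrt{x}} \le C_3 \log r \cdot e^{-c_2 \sqrt{x}}$, which combined with the (much smaller) bound on the small-$k$ contribution proves the proposition. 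The main step is the balancing choice $J = \sqrt{x}$ in the scale-wise deviation bound: it is precisely this optimization, together with the dyadic union bound costing a single $\log r$ factor, that produces the square-root exponent and the logarithmic prefactor in the stated conclusion; most of the remaining work is routine care in choosing the thresholds in Lemma~\ref{lem:bybounds} and Theorem~\ref{thm:treeheight} to give matching exponentials.
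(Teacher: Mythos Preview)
Your proposal is correct and follows essentially the same strategy as the paper: dyadic decomposition of the range of $X_i-1$, a geometric bound on the number of indices at each scale coming from the halving estimate in the proof of \refL{lem:halvingtimes}, and a scale-wise diameter bound obtained by combining the $B_y$ tail (\refL{lem:bybounds}) with the sub-Gaussian diameter estimate (\refT{thm:treeheight}), followed by the balancing $J\sim\sqrt{x}$ (the paper's $j=\beta c'$). The only organisational differences are that the paper packages the scale-length control into a single global parameter via \refL{lem:halvingtimes}, and keeps the size and diameter bounds as two separate inequalities rather than merging them into a one-line conditional tail $\p{\diam(P_i)>t\mid X_i=y}\le C_1e^{-c_1 t(y-1)}$; neither difference is substantive.
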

\begin{proof}
Let $\eps = 1/r$, and recall the definition of the random variable $H_{\eps}$ from Section~\ref{sec:halving}. 
By \refL{lem:halvingtimes}, for $j > 1$ we have 
\[
\p{H_{\eps} > j} < \log_2(2/\eps) e^{-\ccht j} = \log_2 (2r) e^{-\ccht j}\, .
\]
Set $\hat{k} = \ceil{\log_2 (\theta(x_1)/\eps)}$, 
where $x_1$ is as in \refL{lem:halvingtimes}. 
For $0 \le k < \hat{k}\lfloor \log_2 (\theta(x_1)/\eps) \rfloor$ let $\ell_j = I(1+\theta(x_1)/2^j)$, 
and let $\ell_{\hat{k}} = I(1+\eps)$. 

Note that for for $k \le \hat{k}$, if $H_{\eps} \le j$ then $\ell_{k}-\ell_{k-1} \le j$. 
Since $Z_i$ is distributed as $B_{X_i}$, it follows from this observation, a union bound, and 
the upper bound in \refL{lem:bybounds} that for $1 \le k \le \hat{k}$, for all $c > 1$ and $j > 1$, 
\begin{align*}
	& \p{\max\{Z_i: \ell_{k-1}\le i < \ell_k\} \ge c \cdot 2^{2k}/\theta(x_1)^2, H_{\eps} \le j} \\
\le 	& j \sup_{\theta(x_1)/2^{k} < x \le \theta(x_1)/2^{k-1}} \p{ B_x > c \cdot 2^{2k}/\theta(x_1)^2} \\
 \le	& j \sup_{\theta(x_1)/2^{k} < x \le \theta(x_1)/2^{k-1}} \p{ B_x > c/(x-1)^2} \\
\le 	&  20je^{-c/8}\, .
\end{align*}
Writing $c = c'\cdot 2^{\hat{k}-k}$, we have $c \cdot 2^{2k}/\theta(x_1)^2 \le 4 c' \cdot 2^{-(\hat{k}-k)}r^2$, and so 
\[
\p{\max\{Z_i: \ell_{k-1}\le i < \ell_k\} \ge 4c' \cdot 2^{-(\hat{k}-k)}r^2, H_{\eps} \le j} \le 20j e^{-c'/8} e^{-2^{(\hat{k}-k)}/8}\, .
\]
Write $E_k$ for the event whose probability is bounded in the preceding inequality. 
On $E_k$, each of the trees $P_i$, for $\ell_{k-1} \le i < \ell_k$ has size at most $s:=\floor{4c' \cdot 2^{-(\hat{k}-k)}r^2}$. 
Letting $T^{(s)}$ be a uniformly random labeled tree with $s$ vertices, 
by \refT{thm:treeheight} it follows that 
\begin{align*}
& \p{\max\{\diam(P_i):\ell_{k-1}\le i < \ell_k\} \ge 2 c' \cdot 2^{-(\hat{k}-k)/2} r, E_k} \\
\le & j \cdot \p{\diam(T^{(s)}) > \sqrt{c'}\sqrt{s}} \le \CCheight j e^{-\ccheight \cdot c'}\, .
\end{align*}
On the other hand, if $H_{\eps} \le j$ and $\max\{\diam(P_i):\ell_{k-1}\le i < \ell_k\} < 2 c' \cdot 2^{-(\hat{k}-k)/2} r$ for each $k\le \hat{k}$ then 
\[
\sum_{i: X_i > 1+1/r} \diam(P_i) = 
\sum_{1 \le i < \ell_{\hat{k}}} \diam(P_i) < \frac{2 jc'}{1-2^{-1/2}} r < 8 j c' r\, .
\]
It follows that 
\begin{align*}
	&\quad \p{\sum_{i: X_i > 1+1/r} \diam(P_i) \ge 8 c' j r} \\
\le 	&\quad \p{H_{\eps} > j} + 
		\sum_{1 \le k < \hat{k}} \p{\max\{Z_i: \ell_{k-1}\le i < \ell_k\} \ge 4c' \cdot 2^{-(\hat{k}-k)}r^2, H_{\eps} \le j} \\
+	& 	\sum_{1 \le k < \hat{k}} \p{\max\{\diam(P_i):\ell_{k-1}\le i < \ell_k\} \ge 2c' \cdot 2^{-(\hat{k}-k)/2} r, E_k} \\
\le	&\quad  \log_2 (2 r) e^{-\ccht j} + 20j e^{-c'/8} \sum_{i \ge 1} e^{-2^i/8} + \CCheight j e^{-\ccheight \cdot c'}\\
<	&\quad  \log_2 (2 r) e^{-\ccht j} + 40j e^{-c'/8} + \CCheight j e^{-\ccheight \cdot c'}\, .
\end{align*}
Taking $j = \beta c'$ for some small $\beta > 0$ completes the proof. 
\end{proof}

\subsection{The lower bound from \refT{thm:tvolume}.}
For the remainder of Section~\ref{sec:volgor}, for $r > 1$ we write $i_0=i_0(r) = I(1+1/r)$. 
The key to the lower bound is the following proposition, which gives stretched exponential bounds for the lower tail of $|B_{\rT}(\emptyset,r)|$. 

\begin{prop}\label{prop:vglower}
There exist constants $\cc\ccdef\ccglow,\CC\CCdef\CCglow > 0$ such that for all $r > 1$ and all $x > 1$, 
\[
\p{|B_{\rT}(\emptyset,r)| < r^2/x} < \CCx \log r \cdot e^{-\ccx x^{1/8}}\, .
\]
\end{prop}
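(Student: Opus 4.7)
The plan is to locate, with high probability, a single subtree of $T$ that both fits inside $B_{\rT}(\emptyset,r)$ and carries at least $r^2/x$ vertices. The heuristic from Section~\ref{sec:briefsketch} says that for a scale $s \ll r$, the Cayley block $P_{I(1+1/s)}$ is a uniform labelled tree with roughly $s^2$ vertices sitting at $\rT$-distance of order $s$ from the root. To fit such a tree inside the ball we need $s$ a factor of $x^{1/4}$ below $r$—this is the exact choice that aligns the distance bound with the target tail $e^{-cx^{1/8}}$.

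With this in mind, set $s = r/x^{1/4}$, $i_0 = I(1+1/s)$, and introduce the three events
\[
E_1 = \{d_{\rT}(\emptyset, S_{i_0}) \le r/2\}, \quad E_2 = \{\diam(P_{i_0}) \le r/2\}, \quad E_3 = \{Z_{i_0} \ge r^2/x\}.
\]
On $E_1 \cap E_2 \cap E_3$ every vertex $v$ of $P_{i_0}$ satisfies $d_{\rT}(\emptyset,v) \le d_{\rT}(\emptyset,S_{i_0}) + \diam(P_{i_0}) \le r$, so $|B_{\rT}(\emptyset,r)| \ge Z_{i_0} \ge r^2/x$. The bound on $\p{E_1^c}$ comes from the decomposition $d_{\rT}(\emptyset,S_{i_0}) \le \sum_{i:X_i > 1+1/s} \diam(P_i) + (i_0 - 1)$, followed by Proposition~\ref{prop:diamsum} with $y = r/(2s) = x^{1/4}/2$: this yields precisely $C\log r\cdot e^{-c x^{1/8}}$, with the residual $i_0$ term absorbed via Lemma~\ref{lem:halvingtimes}. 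For $\p{E_2^c}$ one combines the conditional subgaussian tail in Theorem~\ref{thm:treeheight} with the upper-tail estimate of $Z_{i_0}$ from Lemma~\ref{lem:bybounds} (restricting to $\{Z_{i_0} \le x^{1/4} s^2\}$ and invoking \refT{thm:treeheight} at $y \asymp x^{1/8}$).

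The main obstacle is controlling $\p{E_3^c}$. A direct application of the lower-tail estimate in Lemma~\ref{lem:bybounds} with $c = 1/\sqrt x$ yields only the polynomial bound $\p{Z_{i_0} < r^2/x} \le 8 x^{-1/4}$, which for large $x$ exceeds the target $e^{-cx^{1/8}}$. This polynomial is intrinsic to the law of $B_y$, whose density is of order $k^{-1/2}$ near the mean, and cannot be sharpened for a single index. The natural way around this is to exploit the conditional independence of the $Z_i$'s given the $X$-process: by Lemma~\ref{lem:halvingtimes} there are typically many indices $i$ with $X_i$ in any given narrow range near~$1$, and one can aggregate the $Z_i$'s over several such $i$'s—either taking a maximum over the indices in a single halving interval, or combining $\Theta(\log x)$ successive halving intervals near the scale $s$—to amplify the per-index polynomial failure probability into the required stretched exponential. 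The delicate point is to perform this amplification while simultaneously keeping each of the chosen indices sufficiently close to the root (in the $d_{\rT}$ metric) to remain inside $B_{\rT}(\emptyset,r)$; this bookkeeping, rather than any single tail estimate, is where I expect the principal technical effort to lie.
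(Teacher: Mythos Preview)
Your diagnosis of the main obstacle is correct: the lower tail $\p{Z_{i_0} < r^2/x} \lesssim x^{-1/4}$ for a single index cannot be improved, and aggregating over several indices is the right fix. However, the proposal stops short of carrying this out, and there is a second polynomial tail you have not noticed, hiding in $E_2$. The event $\{Z_{i_0} > x^{1/4} s^2\}$ that you plan to discard in controlling $\p{E_2^c}$ itself has probability of order $x^{-1/8}$: by the explicit density of $X_{i_0}$ on $(1,1+1/s)$, with probability $\asymp x^{-1/8}$ one has $X_{i_0}-1 < x^{-1/8}/s$, and conditionally on this, $Z_{i_0}$ is typically at least $x^{1/4} s^2$. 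So neither $E_2$ nor $E_3$ can be handled by looking at the single block $P_{i_0}$.

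The paper's proof resolves both issues at once with two clean devices. First, it works at the natural scale, setting $i_0 = I(1+1/r)$, and then searches \emph{forward} for the first $i_1 \ge i_0$ with $Z_{i_1} \ge r^2/k$; since each $Z_i$ with $i \ge i_0$ satisfies $\p{Z_i < r^2/k \mid X_i} \le 8/\sqrt{k}$ by Lemma~\ref{lem:bybounds}, the overshoot $i_1 - i_0$ has a geometric tail $\p{i_1 - i_0 \ge j} \le (8/\sqrt{k})^j$, and the intermediate blocks $P_{i_0},\ldots,P_{i_1-1}$ are all small by construction, so their combined diameter is controlled via Theorem~\ref{thm:treeheight}. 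Second, rather than forcing the whole of $P_{i_1}$ into the ball, the paper invokes the Luczak--Winkler monotone coupling to extract, inside $P_{i_1}$ and rooted at $S_{i_1}$, a uniform subtree of size exactly $\lceil r^2/k \rceil$; this subtree has diameter $\lesssim r/\sqrt{k}$ with subgaussian tails by Theorem~\ref{thm:treeheight}, which sidesteps the ``$P_{i_1}$ too large'' problem entirely. Taking $k$ of order $x^2$ then balances the error terms. Your halving-interval aggregation could likely be made to work too, but note that Lemma~\ref{lem:halvingtimes} only bounds the number of indices per interval from \emph{above}; the forward-search device needs no such lower bound and is both shorter and more robust.
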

\begin{proof}
Fix $k > 1$ and let $i_1\ge i_0=I(1+1/r)$ be minimal so that $Z_{i_0} \ge r^2/k$. By the lower tail bound in \refL{lem:bybounds} we have 
\[
\p{i_1 \ge i_0+j} \le (8/k^{1/2})^j\, ,
\]
for all $j \ge 0$. Furthermore, by \refT{thm:treeheight} and a union bound, for $x > 1$ we have 
\[
\p{i_1 < i_0+j,\sum_{i_0 \le i < i_1} \diam(P_i) > xjr/k^{1/2}} \le j \cdot \CCheight e^{-\ccheight x^2}\, .
\]
A result of Luczak and Winkler \cite{luczak04building} implies that uniformly random rooted labelled trees are stochastically increasing. In other words, given $1 \le m \le n$, it is possible to construct a pair $(t_m,t_n)$ such that $t_m$ and $t_n$ are uniformly random labelled trees on $\{1,\ldots,m\}$ and on $\{1,\ldots,n\}$, respectively, and such that $t_m$ is a subtree of $t_n$. This fact implies that, writing $s=\ceil{r^2/k}$, we may find a subtree $T^{(s)}$ of $P_{i_1}$ so that $(T^{(s)},S_{i_1})$ is distributed as a uniformly random rooted tree with $s$ vertices. 
It follows that for all $x > 1$, 
\begin{align*}
\p{|B_{P_{i_1}}(S_{i_1},x r/k^{1/2})| < r^2/k} & \le \p{|B_{T}(S_{i_1},x r/k^{1/2})| < r^2/k} \\
			& = \p{\diam(T^{(s)}) > x r/k^{1/2}} \\
			& \le \CCheight e^{- \ccheight x^2}
\end{align*}
Finally, if $B_{\rT}(\emptyset,3xr)< r^2/k$, 
then either 
$\sum_{i:X_i > 1+1/r} \diam(P_i) > xr$, 
or 
$\sum_{i_0 \le i < i_1} \diam(P_i) > xr$, 
or
$|B_{P_{i_1}}(S_{i_1},xr)| < r^2/k$. 
By Proposition~\ref{prop:diamsum} and the preceding bounds (the first two applied with $j=k^{1/2}$), we then have 
\[
\p{B_{\rT}(\emptyset,3xr)> r^2/k} \le 
\CCdiam \log r \cdot e^{-\ccdiam x^{1/2}} + \pran{\frac{8}{k^{1/2}}}^{k^{1/2}} + (k^{1/2}+1) \CCheight e^{-\ccheight x^2}.
\]
Taking $k=x^2$ yields that there exist constants $c,C > 0$ such that 
\[
\p{B_{\rT}(\emptyset,3xr)> r^2/x^2} \le C \log r \cdot e^{-cx^{1/2}}\, ,
\]
which completes the proof (take $r'=xr$ so that $r^2/x^2= (r')^2/x^4$). 
\end{proof}
We conclude the section by proving the lower bound from Theorem~\ref{thm:tvolume}. 
\begin{thm}\label{thm:tlower}
For any $\eps > 0$, we have 
\[
\p{\liminf_{r \to \infty} \frac{|B_{\rT}(\emptyset,r)|}{r^2/\log^{8+\eps} r} \ge 1} = 1\, .
\]
\end{thm}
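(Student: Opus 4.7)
\emph{Proof proposal.} The plan is a standard Borel--Cantelli argument along a geometric subsequence, combined with the monotonicity of $r \mapsto |B_{\rT}(\emptyset,r)|$ to transfer the bound to all $r$. The only real work has already been done in Proposition~\ref{prop:vglower}, which gives the stretched-exponential lower tail bound
\[
\p{|B_{\rT}(\emptyset,r)| < r^2/x} \;\le\; \CCglow \log r \cdot e^{-\ccglow x^{1/8}}\, ,
\]
valid for all $r,x > 1$.

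First, fix $\eps > 0$ and set $\eps' = \eps/2$. Let $r_n = 2^n$ and choose $x_n = \log^{8+\eps'} r_n = (n \log 2)^{8+\eps'}$. Applying Proposition~\ref{prop:vglower} at $(r_n,x_n)$ gives
\[
\p{|B_{\rT}(\emptyset,r_n)| < r_n^2/\log^{8+\eps'} r_n} \;\le\; \CCglow (n \log 2) \cdot \exp\!\bigl(-\ccglow (n \log 2)^{1+\eps'/8}\bigr)\, .
\]
Since $(n \log 2)^{1+\eps'/8}$ grows faster than $\log n$, this bound is summable in $n$, so the Borel--Cantelli lemma yields that almost surely, for all $n$ sufficiently large,
\[
|B_{\rT}(\emptyset,r_n)| \;\ge\; r_n^2/\log^{8+\eps'} r_n\, .
\]

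Second, I transfer the estimate to arbitrary $r$. For any $r > 1$ there is a unique $n$ with $r_n \le r < r_{n+1}$, and since balls are nested in $r$,
\[
|B_{\rT}(\emptyset,r)| \;\ge\; |B_{\rT}(\emptyset,r_n)| \;\ge\; \frac{r_n^2}{\log^{8+\eps'} r_n} \;\ge\; \frac{r^2}{4\log^{8+\eps'} r}\, .
\]
Consequently, almost surely,
\[
\liminf_{r \to \infty} \frac{|B_{\rT}(\emptyset,r)|}{r^2/\log^{8+\eps} r} \;\ge\; \liminf_{r \to \infty} \frac{\log^{8+\eps} r}{4 \log^{8+\eps'} r} \;=\; \liminf_{r \to \infty} \frac{\log^{\eps/2} r}{4} \;=\; \infty\, ,
\]
which is more than enough to establish the claim $\liminf \ge 1$.

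There is no serious obstacle: the entire proof rides on the stretched-exponential quality of Proposition~\ref{prop:vglower}. Had the lower-tail probability only been polynomial in $1/x$, the Borel--Cantelli step would have failed, and one would instead have to argue via some form of correlation decay between the volumes at different scales. The $x^{1/8}$ inside the exponential buys so much room that one can afford to lose an extra $\log^{\eps/2} r$ factor in passing from the subsequence $(r_n)$ to general $r$, producing not only $\liminf \ge 1$ but in fact $\liminf = \infty$ almost surely (which, as noted after the statement of Theorem~\ref{thm:tvolume}, is precisely what the lower bound there asserts).
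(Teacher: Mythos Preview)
Your proof is correct and follows essentially the same approach as the paper's: apply Proposition~\ref{prop:vglower} along the geometric subsequence $r_n=2^n$, invoke Borel--Cantelli, and transfer to all $r$ via monotonicity of $r\mapsto|B_{\rT}(\emptyset,r)|$. Your use of $\eps'=\eps/2$ to extract the extra $\log^{\eps/2} r$ factor, and hence $\liminf=\infty$, is a nice touch that makes explicit the stronger conclusion recorded in Theorem~\ref{thm:tvolume}.
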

\begin{proof}
Fix any non-decreasing function $x:(1,\infty) \to (1,\infty)$. 
If $|B_{\rT}(\emptyset,r)| < r^2/x(r)$ for arbitrarily large $r$, then we must also have that 
also have that $|B_{\rT}(\emptyset,2^i)| < 4 r^2/x(2^i)$ for infinitely many $i$. 
It follows that 
\[
\p{|B_{\rT}(\emptyset,r)| < r^2/x(r)~\mathrm{i.o.}} \le \p{|B_{\rT}(\emptyset,2^i)| < 4 \cdot 2^{2i}/x(2^i)~\mathrm{i.o.}}\, . 
\]
By Proposition~\ref{prop:vglower} we have 
\[
\sum_{i \ge 1} \p{|B_{\rT}(\emptyset,2^i)| < 4 2^{2i}/x(2^i)} \le \sum_{i \ge 1} \CCglow \log (2^i) \cdot e^{-\ccglow x(2^i)^{1/8}}\, .
\]
Taking $x(r)=\log^{8+\eps} r$, the latter sum converges, and the result follows by Borel-Cantelli. 
\end{proof}

\subsection{The upper bound from \refT{thm:tvolume}.}

Let $(\rT_{\mathrm{IIC}},\emptyset)$ be the $\pgw(1)$ incipient infinite cluster, with root $\emptyset$. In other words, this is a $\pgw(1)$ Galton-Watson tree with root $\emptyset$, conditioned to have infinite size (the existence of such a law was shown by Grimmett \cite{grimmett80random}, and was later extended to non-Poisson branching distributions by Kesten \cite{kesten86subdiffusive}). We shall use Theorem~3 from~\cite{addario12prim}, which provides an explicit coupling showing that $(\rT,\emptyset)$ is stochastically dominated by $(\rT_{\mathrm{IIC}},\emptyset)$. In other words, we may work in a space in which $(\rT_{\mathrm{IIC}},\emptyset)$ is almost surely a rooted subtree of $(\rT,\emptyset)$. 

Next, for $k \ge 1$, let $(\rT^k_{\mathrm{IIC}},\rho^k)$ be a Galton-Watson tree with Binomial$(k,1/k)$ branching distribution and root $\rho^k$, conditioned to be infinite. From the fact that the Binomial$(k,1/k)$ law converges in total variation to the Poisson$(1)$ law, it is easily seen that $(\rT^k_{\mathrm{IIC}},\rho^k)$ converges in the local weak sense to $(\rT_{\mathrm{IIC}},\emptyset)$ as $k \to \infty$. We may therefore work in a space in which $(\rT^\rho_{\mathrm{IIC}},r^k) \convas (\rT_{\mathrm{IIC}},\emptyset)$, or in other words, for all $r \in \N$ there is an almost surely finite $k_r$ such that for all $k \ge k_r$, 
\[
\rT_{\mathrm{IIC}}(r) \simeq \rT^k_{\mathrm{IIC}}(r)\, .
\]
From this fact, together with the stochastic domination of $\rT$ by $\rT_{\mathrm{IIC}}$, it follows that for any $r >0$ and $m > 0$, 
we have 
\begin{align*}
\p{|B_{\rT}((\emptyset,r),\emptyset)| \ge m} 
& \le \p{|B_{\rT_{\mathrm{IIC}}}(\emptyset,r)| \ge m}  \\
& = \lim_{k \to \infty}\p{|B_{\rT^k_{\mathrm{IIC}}}(\rho^k,r)| \ge m}\, .
\end{align*}
We now use a bound of Barlow and Kumagai~\cite{barlow06iic} (Proposition~2.7), which states that there exist constants $c_0,c_1$ such that for all $k \in \N$ and all $\lambda > 0$, 
\[
\p{|B_{\rT^k_{\mathrm{IIC}}}(\rho^k,r)| \ge \lambda r^2} \le c_0e^{-c_1 \lambda}\, .
\]
In fact, in \cite{barlow06iic} the bound is not asserted to be uniform in $k$ but this is easily verified to be a consequence of the proof. It follows that for all $r > 0$ and $\lambda > 0$
\begin{equation}\label{eq:forupper_bd}
\p{|B_{\rT}(\emptyset,r)| \ge \lambda r^2} \le c_0e^{-c_1 \lambda}\, .
\end{equation}
We conclude Section~\ref{sec:volgor} by proving the upper bound from Theorem~\ref{thm:tvolume}. 
\begin{thm}\label{thm:tupper}
There exists $C > 0$ such that for any $\eps > 0$, we have 
\[
\p{\limsup_{r \to \infty} \frac{|B_{\rT}(\emptyset,r)|}{r^2 \log\log r} \le C} = 1\, .
\]
\end{thm}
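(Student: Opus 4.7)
The plan is to combine the exponential tail bound (\ref{eq:forupper_bd}) with a discretization plus Borel--Cantelli argument, and then interpolate to all radii $r$.

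First, I would choose an integer sequence of scales $r_n = 2^n$ and a threshold $\lambda_n = K \log n$ for some constant $K > 0$ to be fixed. Substituting into (\ref{eq:forupper_bd}) yields
\[
\p{|B_{\rT}(\emptyset, 2^n)| \ge K \log n \cdot 2^{2n}} \le c_0 e^{-c_1 K \log n} = c_0 n^{-c_1 K}.
\]
Choose $K$ large enough that $c_1 K > 1$; then the right-hand side is summable in $n$. By the Borel--Cantelli lemma, almost surely there exists a (random) finite $n_0$ such that for all $n \ge n_0$,
\[
|B_{\rT}(\emptyset, 2^n)| < K \log n \cdot 2^{2n}.
\]

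Next, I would interpolate from the dyadic scales to an arbitrary large $r$. Given $r$, let $n$ be the integer with $2^n \le r < 2^{n+1}$. Monotonicity of balls in the graph distance gives $|B_{\rT}(\emptyset, r)| \le |B_{\rT}(\emptyset, 2^{n+1})|$, and on the almost-sure event above (with $n+1 \ge n_0$) this is bounded by $K \log(n+1) \cdot 2^{2(n+1)} \le 4 K \log(n+1) \cdot r^2$. Since $\log \log r \ge \log(n \log 2)$, we have $\log(n+1)/\log\log r \to 1$ as $r \to \infty$, so
\[
\limsup_{r \to \infty} \frac{|B_{\rT}(\emptyset, r)|}{r^2 \log\log r} \le 4 K
\]
almost surely. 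Setting $C = 4K$ completes the proof.

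There is no real obstacle here: the heavy lifting was already done in obtaining the uniform-in-$k$ exponential bound of Barlow--Kumagai and transferring it to $\rT$ via stochastic domination by the PGW(1) IIC. The present step is the standard promotion of an exponential tail bound at each dyadic scale to an almost-sure $\log\log$ growth rate, and the interpolation is free because the ball size is monotone in the radius and consecutive dyadic scales differ only by a factor $4$.
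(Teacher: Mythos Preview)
Your proposal is correct and follows essentially the same argument as the paper: apply the exponential tail bound (\ref{eq:forupper_bd}) at dyadic scales with threshold proportional to $\log n$, sum the resulting bounds, invoke Borel--Cantelli, and then interpolate using monotonicity of balls. The paper's version is slightly terser (it observes directly that $r^2\log\log r$ varies by at most a factor of~$5$ over a dyadic interval), but the content is identical.
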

\begin{proof}
Since for $r$ large and $r \le s \le 2r$ we have $r^2 \log\log r \le s^2 \log\log s <  5 r^2 \log\log r$, 
it suffices to prove that there exists $C > 0$ such that 
\[
\p{|B_{\rT}(\emptyset,2^i)| > C \log i \cdot 2^{2i}~\mathrm{i.o}}=0. 
\]
Taking $C = 2/c_1$, by (\ref{eq:forupper_bd}) we have 
\[
\sum_{i \ge 1} \p{|B_{\rT}(\emptyset,2^i)| > C \log i \cdot 2^{2i}} \le c_0 \cdot \sum_{i \ge 1} e^{-c_1 (C \log i)} < c_0 \sum_{i \ge 1} i^{-2} < \infty\, ,
\]
and the result follows by Borel-Cantelli. 
\end{proof}

\section{ {\bf Proof of Theorem~\ref{thm:main}}} \label{sec:mainproof}
Recall from Section~\ref{sec:futuremax} that for $1 \le k \le n$, $\rM_{n,k}$ is the subtree of $\rM_n$ built by the first $k$ steps of Prim's algorithm on $\rK_n$, started from vertex $v_1(\rK_n)=1$. 

Let $k(n)=\lceil \log^5 n\rceil$. In what follows we always assume $n$ is large enough that $k(n)< n$. By Proposition~\ref{prop:abgw} and Skorohod's representation theorem, we may work in a space in which 
\begin{equation}\label{eq:convas}
\lim_{n \to \infty} \sup\{i \in \N: \rM_{n,i} = \rT_i\} \stackrel{\mathrm{a.s.}}{=} \infty\, ,
\end{equation}
and do so for the remainder of the proof. 

In this section we will write both $v_i = v_i(\rK_n) \in V(\rM_{n,i})$ and $v_i = v_i(\rU) \in V(\rT_{i})$, and likewise write both $e_i = e_i(\rK_n)$ and $e_i = e_i(\rU)$, when 
there is little risk of ambiguity. By the comments of the preceding paragraph, at least for fixed $i$ this is not a major abuse of notation. 

Next, recall the definition of $g_n(j,z)$ from~(\ref{eq:gzdef}) 
and, for $1 \le j \le n-1$ and $z> 1$, let 
\[
d_n(j,z) = \inf\{\ell: j < \ell \le n-1, W_n(e_{\ell}) \ge z\},
\]
\nomenclature[Dnjz]{$d_n(j,z)$}{Equals $\inf\{\ell: j < \ell \le n-1, W_n(e_{\ell}) \ge z\}$.} 
or set $d_n(j,z)=n$ if the preceding infimum is empty. In what follows we write $d_n(z)=d_n(k(n),z)$ and $g_n(z)=g_n(k(n),z)$ for succinctness. 

For $z \ge 0$ and for $1 \le j \le n$, let $\cF_n(j,z)$ be the $\sigma$-algebra induced by $\{\rM_{n,i},1 \le i \le j\}$ and by the indicator $\I{W_n(e_{j}) > z}$. 
\nomenclature[Fnjz]{$\mathcal{F}_n(j,z)$}{$\sigma$-algebra induced by $\{\rM_{n,i},1 \le i \le j\}$ and by the indicator $\I{W_n(e_{j}) > z}$.}
(This leads to a sort of filtration that is commonly encountered in probabilistic combinatorics. Informally, $\cF_n(j,z)$ takes us ``part way through'' step $j+1$ of Prim's algorithm: we reveal whether $e_j$ has weight greater than $z$, but leave the discovery of $e_j$'s endpoints and precise weight for later.) 
Note that while $d_n(z)$ is random, it is a stopping time for the filtration $\{\cF_n(j,z),1 \le j \le n\}$ and so $\cF_{n}(d_n(z),z)$ is a $\sigma$-algebra - see \cite{williams91probability}, A 14.1. Also, $\rM_{n,d_n(z)}$ is measurable with respect to $\cF_n(d_n(z),z)$. 

We now run Kruskal's algorithm starting from the graph consisting of $\rM_{n,d_n(z)}$ together with the MSTs of the components of $K_n^z$ disjoint from $\rM_{n,d_n(z)}$. More precisely, for $\lambda \ge z$, let $F_{n}^{z,\lambda}$ be the subgraph of $M_n$ with vertices $\{1,\ldots,n$ and edges 
\[
\{e \in E(M_n): e \in E(M_{n,d_n(z)})~\mbox{or}~W_n(e) \le \lambda\}. 
\]
\nomenclature[Fnzlambda]{$F_{n}^{z,\lambda}$}{Subgraph of $M_n$ with edges 
$\{e \in E(M_n): e \in E(M_{n,d_n(z)})~\mbox{or}~W_n(e) \le \lambda\}$.}
We define $F_n^{z,\lambda-}$ similarly, but with the requirement that $W_n(e) < \lambda$. 
For $v \not\in V(M_{n,d_n(z)})$ we let $x_n(v) = \inf\{\lambda: v \in F_{n}^{z,\lambda}\}$. 
Write
\[
\cG_{n}(\lambda)=\sigma(\rK_n^t,0 \le t \le \lambda) = \sigma(W_n(e)\I{W_n(e) \le \lambda},e \in E(K_n))\, 
\]
for the $\sigma$-algebra containing all information about the graph process $(\rK_{n}^{t},0 \le t \le \lambda)$, 
and likewise define $\cG_{n}(\lambda-)$. 
We then have that $F_{n}^{z,\lambda}$ and $F_n^{z,\lambda-}$ are measurable with respect to $
\hat{\cF}_{n,z,\lambda}=\sigma(\cF_n(d_n(z),z) \cup \cG_{n}(\lambda))$ 
and $
\hat{\cF}_{n,z,\lambda-}=\sigma(\cF_n(d_n(z),z) \cup \cG_{n}(\lambda-))$, respectively. 

Let $M_n^{z,\lambda}$ be the subtree of $F_{n}^{z,\lambda}$ consisting of all nodes in the same component of $F_{n}^{z,\lambda}$ as $1=v_1$ whose path to $v_1$ in $F_{n}^{z,\lambda}$ contains no node $v_j$ with $g_n(z)<j \le d_n(z)$, and let $\rM_n^{z,\lambda}$ be the associated random RWG. 
\nomenclature[Mnzlambda]{$M_n^{z,\lambda}$}{Subtree of $F_n^{z,\lambda}$ induced by the set of vertices whose path to $v_1$ in $F_{n}^{z,\lambda}$ does not pass through $\{v_j,g_n(z)<j \le d_n(z)\}$.}
Next, recall the definition of $\rM(\lambda)$ from Section~\ref{sec:pwag}. For $1 \le i \le g(z)$ let $M^{z,\lambda}$ be the subtree of $M(\lambda)$ consisting of all nodes whose path to the root $\emptyset=v_1$ of $M(\lambda)$ contains no node $v_j$ with $j > g(z)$, and let $\rM^{z,\lambda}$ be the corresponding random RWG. 
\nomenclature[Mzlambda]{$M^{z,\lambda}$}{Subtree of $M(\lambda)$ induced by nodes whose path to $v_1$ in $M(\lambda)$ does not pass through $\{v_j,j > g(z)\}$.}
(Likewise define $M_{n}^{z,\lambda-},M^{z,\lambda-},\rM_{n}^{z,\lambda-}$, and $\rM^{z,\lambda-}$ in the obvious ways). In what follows we write $\rM_n^z$ and $\rM^z$ for $\rM_n^{z,\infty}$ and $\rM^{z,\infty}$, respectively. 

\begin{lem}\label{lem:tnztotn}
For any $z > 1$ we have $\lim_{\lambda \to \infty} \limsup_{n \to \infty} \p{\rM_n^{z,\lambda} \ne \rM_n^z} = 0$. 
\end{lem}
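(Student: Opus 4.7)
The event $\{\rM_n^{z,\lambda}\ne \rM_n^z\}$ is equivalent to the existence of some edge $e\in E(M_n^z)\setminus E(M_{n,d_n(z)})$ with $W_n(e)>\lambda$. By the tree structure of $M_n$, such an $e$ must be a Prim edge $e_i$ with $i\ge d_n(z)$ whose endpoint $v_{i+1}$ lies in $V(M_n^z)$. The plan is to handle the limiting object first, and then transfer the bound to the finite $n$ setting via a coupling that tracks both the attachment structure and the edge weights up to a fixed weight cutoff $\lambda$.

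\textbf{Step 1 (limit).} I would first show $\p{\rM^{z,\lambda}\ne \rM^z}\to 0$ as $\lambda\to\infty$. Using the Poisson Galton--Watson aggregation description of Section~\ref{sec:pwag}, $\rM^z$ consists of the finite tree $T_{g(z)}$ together with the subtrees aggregated onto its vertices (and recursively). Since $g(z)<\infty$ a.s., and since $\int_1^\infty(1-\theta(t))\d t<\infty$ and the aggregated trees are subcritical PGW (thus a.s.\ finite of finite mean), the total vertex count $|V(M^z)|$ is a.s.\ finite. Every edge of $M^z$ either lies in $T_{g(z)}$ (finitely many, with finite weights) or in an aggregated tree attached at time $x(v)<\infty$ (so its weight is at most $x(v)$). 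Hence $\max_{e\in E(M^z)}W(e)<\infty$ a.s., and $\p{\rM^{z,\lambda}\ne\rM^z}=\p{\max_{e\in E(M^z)}W(e)>\lambda}\to 0$.

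\textbf{Step 2 (finite $n$).} The heart of the argument is to show $\limsup_{n\to\infty}\p{\rM_n^{z,\lambda}\ne\rM_n^z}\le \p{\rM^{z,\lambda}\ne\rM^z}$. First, Proposition~\ref{prop:abgw} couples $\rM_{n,k(n)}$ with $\rT_{k(n)}$ so that $g_n(z)=g(z)$ and $M_{n,g_n(z)}=T_{g(z)}$ on an event of probability $1-o(1)$. Next, exploit the fact that in Prim's algorithm on $K_n$ with i.i.d.\ Exponential edge weights, conditional on $\rM_{n,i}$, the Prim-parent of $v_{i+1}$ is uniform on $V(M_{n,i})$ and is independent of the edge weight $W_n(e_i)$ (which is itself exponentially distributed with rate $i(n-i)(n-1)$). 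Together with $d_n(z)=\Theta(n)$ from the bound~(\ref{eq:gzupper}) and the estimate~(\ref{eq:giantldbound}) for the giant of $K_n^z$, this exchangeability matches the limiting dynamics: the rate at which new vertices attach to the admissible cluster mirrors the integrated Poisson--GW aggregation rate $(1-\theta(t))\d t$, and the conditional size of each attached block is asymptotically $\mathrm{PGW}(t^*)$ distributed. Using this match, I would extend the coupling from Proposition~\ref{prop:abgw} so that on a high-probability event, $\rM_n^{z,\lambda}$ and $\rM^{z,\lambda}$ agree as RWGs, where the weight cutoff $\lambda$ ensures that only a bounded (tight in $n$) number of aggregation events need be tracked.

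\textbf{Step 3 (combine).} From Steps 1 and 2,
\[
\limsup_{n\to\infty}\p{\rM_n^{z,\lambda}\ne \rM_n^z}\le \p{\rM^{z,\lambda}\ne\rM^z}\xrightarrow{\lambda\to\infty} 0,
\]
which is the claim.

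\textbf{Main obstacle.} Step 2 is the technical core. The difficulty is extending the coupling of Proposition~\ref{prop:abgw}, which only controls the first $o(n)$ Prim steps, to the entire growth of the admissible cluster, which a priori extends over $\Theta(n)$ Prim steps. The weight cutoff $\lambda$ is essential here: it restricts attention to a random-but-a.s.-finite subtree, whose size and internal edge weights can be controlled via exchangeability and exponential tail estimates, so that only finitely many attachment events must be coupled correctly. The corresponding unrestricted statement (without cutoff) is essentially Theorem~\ref{thm:main} itself, and one should expect the coupling argument for Step 2 to share several ingredients with, but be strictly weaker than, the proof of that theorem.
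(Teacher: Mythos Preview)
Your proposal has a genuine circularity problem in Step~2. You claim that a coupling of $\rM_n^{z,\lambda}$ with $\rM^{z,\lambda}$ (for fixed $\lambda$) yields
\[
\limsup_{n\to\infty}\p{\rM_n^{z,\lambda}\ne \rM_n^z}\le \p{\rM^{z,\lambda}\ne \rM^z},
\]
but this does not follow. The left-hand side is an event involving $\rM_n^z=\rM_n^{z,\infty}$, which your coupling at level $\lambda$ says nothing about. To pass from a coupling of the truncated objects to a statement about $\rM_n^{z,\infty}$, you would need either (i) a coupling valid simultaneously for all $\mu\ge\lambda$, or (ii) convergence $\rM_n^z\convdist\rM^z$. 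But (ii) is essentially what Lemmas~\ref{lem:tnztotn} and~\ref{lem:tntot} are combined to prove on the way to Theorem~\ref{thm:main}, so invoking it here is circular; and (i) is precisely the uniform-in-$\lambda$ control that Lemma~\ref{lem:tnztotn} is meant to supply. Your ``Main obstacle'' paragraph identifies the right difficulty but does not resolve it: the weight cutoff $\lambda$ makes $\rM_n^{z,\lambda}$ tractable, but the event $\{\rM_n^{z,\lambda}\ne\rM_n^z\}$ lives beyond the cutoff.

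The paper's proof avoids the limit object entirely and works directly in finite $n$. The key observation is an exchangeability argument: when a new component joins $M_{n,d_n(\lambda)}$ at time $\lambda$, its attachment point is uniform over the eligible vertices, which yields the conditional bound
\[
\Cexp{|V(M_n^{z,\infty})|}{\hat{\cF}_{n,z,\lambda}} \le n\cdot\frac{|V(M_n^{z,\lambda})|}{d_n(\lambda)}\, .
\]
Applying this first with $\lambda=z$ gives tightness of $|V(M_n^z)|$ (using only that $g_n(z)$ is tight and $d_n(z)\asymp\theta(z)n$). Applying it again with large $\lambda$, where $d_n(\lambda)/n\to\theta(\lambda)\to 1$, shows that the conditional expected number of vertices added after time $\lambda$ is small on a high-probability event, and Markov's inequality finishes. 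No reference to $\rM^z$ or to any coupling is needed; the lemma is proved before, and independently of, Lemma~\ref{lem:tntot}.
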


\begin{lem}\label{lem:tntot}
For any fixed $\lambda \ge z$ we have $\rM_n^{z,\lambda} \convdist \rM^{z,\lambda}$ as $n \to \infty$. 
\end{lem}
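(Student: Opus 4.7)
The plan is to couple $\rM_n^{z,\lambda}$ with $\rM^{z,\lambda}$ so that, for any fixed radius $r$, their balls of radius $r$ about the root coincide (up to edge-weight perturbations of size $o(1)$) with probability tending to one; local weak convergence then follows. The coupling has two stages.

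First I couple the Prim's phases. Since $k(n)=\lceil\log^5 n\rceil = o(\sqrt{n})$, Proposition~\ref{thm:abgk} together with Skorokhod's representation theorem supplies a probability space on which $\rM_{n,k(n)} = \rT_{k(n)}$ almost surely for all $n$ large enough. Because $g(z)$ is a.s.\ finite, on the coupling event and for $n$ large we have $g(z)\le k(n)$, hence $g_n(z) = g(z)$ and $\rM_{n,g_n(z)} \simeq \rT_{g(z)}$. This already matches the Prim-built portion of each restricted tree: the part of $\rM_n^{z,\lambda}$ (resp.~$\rM^{z,\lambda}$) that survives the path-restriction and lies inside the Prim-built tree is exactly $\rM_{n,g_n(z)}$ (resp.~$\rT_{g(z)}$).

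Next I grow the two restricted trees from this common backbone via a breadth-first exposure, matching step by step. On the PWIT side the growth is the Poisson Galton--Watson aggregation of Section~\ref{sec:pwag}: each active vertex accumulates edges of weight in $(z,\lambda]$ at rate $1-\theta(\cdot)$, and to an edge of weight $s$ is attached a subtree with distribution $\pgw(s^*)$. Section~\ref{sec:pwag} shows $|B_{\rM^{z,\lambda}}(v_1,r)|$ is almost surely finite, so for any $\varepsilon>0$ there is a finite $L=L(r,z,\lambda,\varepsilon)$ such that no more than $L$ vertex exposures are required to fill the radius-$r$ ball except with probability $\varepsilon$.

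At each BFS step I would compare the laws of the new edges and subtrees on the two sides. With $O(L)$ vertices exposed and $n-O(L)$ remaining unexposed, the edges from an active vertex $u$ to the unexposed set have iid $\mathrm{Exponential}(n-1)$ weights; their order statistics in $(z,\lambda]$ converge in total variation to a rate-one Poisson process on $(z,\lambda]$, matching the PWIT children-weights at $u$. Given such an edge $\{u,v\}$ of weight $s\in(z,\lambda]$ to an unexposed $v$, the condition $\{u,v\}\in E(M_n)$ is equivalent (by the cycle property of the MST) to the component of $v$ in the unexposed part of $K_n^{s-}$ not reconnecting to the already-exposed portion. The hard part of the plan is to show, uniformly in $s\in(z,\lambda]$, that this component is in the local weak sense $\pgw(s)$ and that conditioning on its finiteness produces $\pgw(s^*)$, matching the PWIT prescription. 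The subcritical regime $s<1$ is standard Erd\H{o}s--R\'enyi local weak convergence; the supercritical regime $s>1$ uses duality relating finite non-giant Erd\H{o}s--R\'enyi components to $\pgw(s^*)$ trees (cf.~\cite{remco13rgcn}), with \eqref{eq:giantldbound} providing the required uniformity. Once this is in hand, only $O(1)$ exposures are needed and the coupling errors accumulate to $o(1)$, yielding the conclusion.
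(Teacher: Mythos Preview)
Your overall strategy---couple the Prim phase so that $\rM_{n,g_n(z)}=\rT_{g(z)}$ for large $n$, then couple the subsequent aggregation step by step---is the same as the paper's. The differences are in organisation, and your sketch leaves the ``hard part'' unresolved in a way the paper's argument sidesteps.

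Rather than a BFS vertex-by-vertex exposure bounded by some $L(r,z,\lambda,\varepsilon)$, the paper tracks the successive \emph{attachment times} $z=\lambda_0<\lambda_1<\cdots$ (the weights of new edges leaving the current restricted tree) and proves by induction on $j$ that one can couple so that $\lambda_{n,j}=\lambda_j$ and $\rM_n^{z,\lambda_{n,j}}\simeq\rM^{z,\lambda_j}$ for all $n$ large. Since $\lambda_j\to\infty$ a.s., finitely many steps suffice for any fixed $\lambda$, and at each step only four pieces need matching: the waiting time (a minimum of independent shifted exponentials on each side), the attachment vertex (uniform over vertices $v$ with $x(v)\le\lambda_{j+1}$), whether the far endpoint lands outside the Prim tree, and if so the attached subtree and its edge weights.

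The point you flag as the hard part is handled more directly than via supercritical duality. The condition for the new edge to enter $M_n^{z,\lambda_{n,j+1}}$ is that its far endpoint $y_{n,j+1}$ falls outside $\{v_i:i\le d_n(\lambda_{n,j+1})\}$; on the high-probability event $A_{n,\lambda_{j+1}}=\{|d_n(\lambda_{j+1})-n\theta(\lambda_{j+1})|\le\alpha n\}$ this has probability within $2\alpha$ of $1-\theta(\lambda_{j+1})$, matching item (iii) on the PWIT side. Conditionally on this, the attached component lives in the restriction of $K_n^{\lambda_{n,j+1}}$ to the complement of $\{v_i:i\le d_n(\lambda_{n,j})\}$, which is a \emph{subcritical} Erd\H{o}s--R\'enyi graph with mean degree close to $\lambda_{n,j+1}(1-\theta(\lambda_{n,j+1}))=\lambda_{n,j+1}^*$; the component is therefore sandwiched between $\pgw(\lambda_{n,j+1}^*\pm 2\alpha)$ for arbitrary $\alpha>0$, and no appeal to supercritical duality is needed. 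Your aside about the ``subcritical regime $s<1$'' is moot, since $s>z>1$ throughout.
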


Assuming the two lemmas, the proof of Theorem~\ref{thm:main} is easily completed. 
By the definition of $g(z)$, the edge $e_{g(z)} = \{p(v_{g(z)+1}),v_{g(z)+1}\}$ is almost surely the last edge of weight at least $z$ added by invasion percolation on $\rU$. It follows that $e_{g(z)}$ is on the unique infinite path from the root $\emptyset$ in $\rT$, and that for all $i > g(z)+1$, $v_i$ is a descendant of $v_{g(z)+1}$. Furthermore,  $\rT$ is locally finite and $g(z) \to \infty$ as $z \downarrow 1$. It follows that for any fixed $r \in (1,\infty)$. 
We thus have 
\[
\lim_{z \downarrow 1} \p{d'_{\rT}(\emptyset,v_{g(z)}) \ge r} = 1. 
\]
By (\ref{eq:convas}), it follows that 
\[
\lim_{z \downarrow 1} \liminf_{n \to \infty} \p{d'_{\rM_{n,k(n)}}(1,g_n(z)) \ge r} = 1\, .
\]
From these facts, it follows that 
\begin{equation}\label{eq:ballsbound}
\lim_{z \downarrow 1} \p{B'_{\rM}(\emptyset,r) \subset V(T^z)} =1\, \quad \mbox{and} \quad
\lim_{z \downarrow 1} \liminf_{n \to \infty} \p{B_{\rM_n}'(1,r) \subset V(M_n^z)} = 1\, .
\end{equation}
Finally, it was observed in Section~\ref{sec:pwag} that $\rM=\rM(\infty)$ is almost surely locally finite, and by Corollary~\ref{cor:nvdist}, below, we have that $\rM$ is almost surely one-ended. It follows that $\rM^z$ is almost surely finite, and so for any fixed $z > 1$ we have 
\[
\lim_{\lambda \to \infty} \p{\rM^{z,\lambda} \ne \rM^z} = 0, 
\]
which combined with Lemmas~\ref{lem:tnztotn} and~\ref{lem:tntot} yields that 
$\rM_n^z \convdist \rM^z$. Together with (\ref{eq:ballsbound}), this implies that 
\[
\rM'_n(r) \convdist \rM'(r)\, 
\]
(recall from the introduction that for an RWG $\rG$, we write $\rG'(r)$ for the sub-RWG induced 
by the set of nodes at weighted distance at most $r$ from the root). 
Since $r$ was arbitrary, this proves Theorem~\ref{thm:main}. We now turn to the proofs of Lemmas~\ref{lem:tnztotn} and~\ref{lem:tntot}. 
In proving both lemmas, we will 
use the following definition. For $z > 1$ and $v \in V(K_n)$ write 
\[
x_n(v,z) = \begin{cases}
			\max\{W_n(e_j), i \le j < d_n(z)\}	& \mbox{ if } v=v_i, i < d_n(z) \\
			\max\{W_n(e_j), d_n(z) \le j < i\}	& \mbox{ if } v=v_i, i \ge d_n(z)\, .
		\end{cases}
\]
We also recall from Section~\ref{sec:pwag} that for $v \in V(M)$, $x(v)$ is the largest weight of any edge in the unique infinite path in $\rM$ starting from $v$, and that $x(v)=a(v)$ for $v \not \in V(T)$. 

Note that for any $\lambda \ge z$ and any $v \in V(K_n)$, the random variable $x_n(v,z) \I{x_n(v,z) \le \lambda}$ is $\hat{\cF}_{n,z,\lambda}$-measurable. 
Note also that for $j \le g_n(z)$ we have $\max\{W_n(e_j), i \le j < d_n(z)\}=\max\{W_n(e_j), i \le j \le g_n(z)\}$ by the definitions of $g_n(z)$ and of $d_n(z)$. Also, since $g(z)$ is almost surely finite, by (\ref{eq:convas}) we have $g_n(z) \convas g(z)$ and so almost surely, for all $n$ sufficiently large, we have $g_n(z)=g(z)$ and $x_n(v_i,z) = x(v_i)$ for all $1 \le i \le g_n(z)$. Furthermore, for $g_n(z) < i \le d_n(z)$, necessarily $x_n(v_i,z) < z$. 

\begin{proof}[Proof of Lemma~\ref{lem:tnztotn}]
For $z \le \lambda \le \infty$, 
note that the component of $F_n^{z,\lambda-}$ containing $1=v_1$ is precisely $M_{n,d_n(\lambda)}$. 
Indeed, by the definition of $d_n(\lambda)$, the vertices of $M_{n,d_n(\lambda)}$ are precisely those vertices of $M_n$ joined to $M_{n,d_n(z)}$ by a path all of whose edges have weight less than $\lambda$. These are precisely the vertices joined to $M_{n,d_n(z)}$ by Kruskal's algorithm started from $F_{n}^{z,z}$ and stopped at weight $\lambda-$. 

Next, for $z < \lambda < \infty$, suppose that $C$ is a component of $F_{n}^{z,\lambda-}$ disjoint from $M_{n,d_n(\lambda)}$ and that $C$ is joined to $M_{n,d_n(\lambda)}$ at time $\lambda$, by some edge $\{v,w\}$ with $v \in V(M_{n,d_n(\lambda)})$ and $w \in V(C)$. By the symmetry of the model, $v$ is equally likely to be any vertex $v\in V(M_{n,d_n(\lambda)})$ with $x_n(v,z) \le \lambda$ (and can not be any vertex $v$ with $x_n(v,z) > \lambda$). 
But almost surely 
\[
\{v \in V(M_{n,d_n(\lambda)}): x_n(v,z) > \lambda\} = \{v_i, 1 \le i \le g_n(\lambda)\} \subset \{v_i, 1 \le i \le g_n(z)\} = V(M_n^{z,z})\, .
\]
Since $V(M_n^{z,z}) \subset V(M_n^{z,\lambda-})$, this implies that for any $\lambda > z$, the end point in 
$V(M_{n,d_n(\lambda)})$ of a new connection at time $\lambda$ is uniformly distributed over 
\[
\{v_i, g_n(\lambda)<i \le d_n(t)\} \supset V(M_{n,d_n(\lambda)})\setminus V(M_{n}^{z,\lambda-})\, .
\]
Since also $|V(M_n^{z,\infty})|=n$, this immediately yields that 
for all $z \le \lambda < \infty$, 
\begin{equation}\label{eq:fortower}
\Cexp{|V(M_n^{z,\infty})|}{\hat{\cF}_{n,z,\lambda}} \le n \cdot \frac{|V(M_n^{z,\lambda})|}{|V(M_{n,d_n(\lambda)})|} 
= n \cdot \frac{|V(M_n^{z,\lambda})|}{d_n(\lambda))}\, .
\end{equation}
Next, since $g(z)$ is a.s.\ finite and $g_n(z) \convas g(z)$ in the space where (\ref{eq:convas}) holds, it follows that for all $\eps > 0$ there is $N_\eps > 0$ such that for $n$ large, 
\[
\p{g_n(z) \ge N_{\eps}} \le \eps/3\, .
\]
Now fix $\eps > 0$ and $0 < \alpha < \theta(z)/2$ small enough that $1/(1-\alpha) < 1 + \eps^3/(3N_{\eps}^2)$. Next, for any $\lambda >1$ write $A_{n,\lambda} = \{|d_n(\lambda) - n\theta(\lambda)| \le \alpha n|\}$. Reprising the argument for (\ref{eq:gzupper}), for $n$ large enough, if $A_{n,\lambda}$ fails to occur then either $|C_{n,2}(\lambda)| \ge \log^3 n$ or $J_n(\lambda) \ge \log^2 n$ or $|C_{n,1}(\lambda) - n\theta(\lambda)| \ge \alpha n/3$, so for any fixed $\lambda > 1$, for $n$ large, 
\[
\p{A_{n,\lambda}^c} < n^{-99}\, .
\]

For any $\eps \le \alpha$, combining bounds from the last three displayed equations, we obtain that 
\begin{align*}
	& \quad \p{|V(M_n^{z,\infty})| \ge 3N_\eps/\eps^2} \\
\le 	& \quad \p{g_n(z) \ge N_\eps} + \p{A_{n,z}^c} + \frac{\eps^2}{3N_\eps} \E{|V(M_n^{z,\infty})| \I{A_{n,z},g_n(z) \le N_\eps}} \\
< 	& \quad \frac{\eps}{3} + \frac{1}{n^{99}} + \frac{\eps^2}{3N_\eps} \cdot n \cdot  \frac{N_\eps}{\theta(z) - \alpha} \\
<	& \quad \eps\, ,
\end{align*}
for $n$ large. The penultimate inequality follows from (\ref{eq:fortower}) applied with $\lambda = z$ and the tower law (since $M_n^{z,z} =M_{n,g_n(z)}$ by definition).  The final inequality holds since $\theta(z) - \alpha \ge \theta(z)/2 \ge \alpha \ge \eps$. 

Finally, by our choice of $\alpha$, and since $\theta(\lambda) \to 1$ as $\lambda \to \infty$, we may choose $\lambda > z$ sufficiently large that $1/(\theta(\lambda)-\alpha) < 1+ \eps^3/(3N_\eps^2)$. By (\ref{eq:fortower}) we have 
\[
\Cexp{ |V(M_n^{z,\infty})| - |V(M_n^{z,\lambda})|}{\hat{\cF}_{n,z,\lambda}} \le |V(M_n^{z,\lambda})| \cdot\pran{ \frac{n}{d_n(\lambda)} - 1}\, .
\]
On $A_{n,\lambda} \cap \{|V(M_n^{z,\infty})| < 3N_{\eps}/\eps^2\}$ we have 
\[
|V(M_n^{z,\lambda})| \cdot\pran{ \frac{n}{d_n(\lambda)} - 1} \le \frac{3N_\eps}{\eps^2} \pran{\frac{1}{\theta(\lambda)-\alpha} - 1} < \eps\, ,
\]
and so 
\begin{align*}
\p{ \rM_n^{z,\infty} \ne \rM_n^{z,\lambda}} & = \p{ |V(M_n^{z,\infty})| - |V(M_n^{z,\lambda})| \ge 1} \\
							& \le \p{A_{n,\lambda}^c} + \p{|V(M_n^{z,\infty})| \ge 3N_\eps/\eps^2} + \eps \\
							& < 3\eps\, ,
\end{align*}
for $n$ large. As $\eps > 0$ was arbitrary this completes the proof. 
\end{proof}
We now proceed to the proof of Lemma~\ref{lem:tntot}. It would be possible to prove the lemma via an appeal to general theory (e.g. Theorem 4.2.5 of \citet{EK}), but verifying the relevant conditions is no simpler than providing a bare-hands proof, so we prefer the latter. 
\begin{proof}[Proof of Lemma~\ref{lem:tntot}] 
Fix $z > 1$. By (\ref{eq:convas}) and the comments just before the proof of Lemma~\ref{lem:tnztotn}, we may work in a space in which almost surely, for $n$ sufficiently large, we have $g_n(z) = g(z)$, $\rM_{n,g_n(z)} \rT_{g(z)}$, and $x_n(v_i,z) = x(v_i)$ for all $i \le g_n(z)$. We work in such a space throughout the proof. 

We begin by considering the case $\lambda=z$. The forest $F_{n}^{z,z-}$ is just the tree $M_{n,d_n(z)}$ together with the components of $K_n^z$ disjoint from $M_{n,d_n(z)}$. 
Since $W_n(e_{g_n(z)})\ge z$ and $W_n(e_i)< z$ for $g_n(z) < i < d_n(z)$, none of $v_{g_n(z)+2},\ldots,v_{d_n(z)}$ are incident to any of $v_1,\ldots,v_{g_n(z)}$. 
 It follows that almost surely $\rM_n^{z,z}=\rM_{n,g_n(z)}$. Similarly, 
 for $i \le g(z)$, the activation time $x(v_i)$ is at least $z$ and so almost surely $\rM^{z,z} = \rT(g(z))$. 
It follows that almost surely $\rM_n^{z,z}=\rM^{z,z}$ for $n$ large. 

Now let $\lambda_0=z$, and for $j \ge 0$ let 
\[
\lambda_{j+1} = \inf\left\{W(e): e=\{u,y\},u \in V(M^{z,\lambda_j}),y\not\in V(M^{z,\lambda_j})\right\}\, .
\]
The preceding infimum is almost surely finite and attained by a unique edge, which we denote 
$f_{j+1}=\{u_{j+1},y_{j+1}\}$, labelled so that $u_{j+1} \in V(M^{z,\lambda_j}),y_{i+1}\not\in V(M^{z,\lambda_j})$. 
Likewise, for $n \in \N$ let $\lambda_{n,0}=z$, and for $j \ge 0$ let 
\[
\lambda_{n,j+1} = \inf\left\{W(e): e=\{u,y\},u \in V(M_n^{z,\lambda_j}),y\not\in V(M_n^{z,\lambda_j})\right\}\, ,
\]
and let $f_{n,j+1}$ attain the infimum and have endpoints $u_{n,j+1}\in V(M_n^{z,\lambda_j}),y_{n,j+1}\not\in V(M_n^{z,\lambda_j})$. 

We will show that for any fixed non-negative integer $j$, it is possible to couple $\rM_{n}^{z,\lambda_{n,j}}$ and $\rM^{z,\lambda_j}$ so that almost surely, for all $n$ sufficiently large, $\lambda_{n,j}=\lambda_{j}$, and $\rM_{n}^{z,\lambda_{n,j}}$ and $\rM^{z,\lambda_j}$ are isomorphic as RWGs. Since $\rU$ is almost surely locally finite, $\lambda_{j} \to \infty$ almost surely as $j \to \infty$, so such a coupling immediately yields the claimed result. 

For $j=0$, we have already established the claim. 
Now fix $j \ge 0$ for which the claim holds, and work in a space in which 
$\lambda_{n,j}=\lambda_{j}$ and $\rM_{n}^{z,\lambda_{n,j}}=\rM^{z,\lambda_j}$ for $n$ large (we gloss the fact that $\rM_{n}^{z,\lambda_{n,j}}$ and $\rM^{z,\lambda_j}$ are isomorphic rather than identical, for ease of exposition). Note that in such a space, we also have $x(v)=x_n(v,z)$ for all $v \in V(M^{z,\lambda_j})$. 

Conditional on $\lambda_j$, on  $\rM^{z,\lambda_j}$ and on $(x(v), v \in V(M^{z,\lambda_j}))$, let $(E_v, v \in V(M^{z,\lambda_j}))$ be independent Exponential$(1)$ random variables, and for each $v \in V(M^{z,\lambda_j})$ let $E_v^+=\max(x(v),\lambda_j)+E_v$. By the definition of the process $(M(\lambda),\lambda \ge 1)$, under this conditioning, $\lambda_{j+1}$ is distributed as $\min\{ E_v^+: v \in V(M^{z,\lambda_j})\}$. 
Furthermore, additionally conditioning on $\lambda_{j+1}$, we have the following properties: 
\begin{itemize}
\item[(i)] the endpoint $u_{i+1}$ of $f_{i+1}$ within $M^{z,\lambda_j}$ is uniformly distributed among those $v \in V(M^{z,\lambda_j})$ with $x(v) \le \lambda_{j+1}$; 
\item[(ii)] the subtree of $\rM^{z,\lambda_{j+1}}$ that attaches at time $\lambda_{j+1}$ (i.e., containing the vertices $V(M^{z,\lambda_{j+1}}) \setminus V(M^{z,\lambda_j})$) is $\pgw(\lambda_{j+1})$-distributed;
\item[(iii)] we have $\rM^{z,\lambda_{j+1}}=\rM^{z,\lambda_j}$ precisely if the subtree from (ii) is finite, which occurs with probability $1-\theta(\lambda_{j+1})$; and 
\item[(iv)] the edge weights of the subtree from (ii) are independent exponentials conditioned to have value at most $\lambda_{j+1}$. 
\end{itemize}

We next work conditional on $\lambda_{n,j}$, on $\rM_n^{z,\lambda_{n,j}}$ and on $(x_n(v,z), v \in V(M_n^{z,\lambda_{n,j}}))$. Under such conditioning, independently for each $v \in V(M_n^{z,\lambda_{n,j}})$, the smallest weight edge incident to $v$ leaving $M_n^{z,\lambda_{n,j}}$ has weight distributed as 
\[
\max(x_n(v,z),\lambda_j)+\mathrm{Exponential}\pran{\frac{n-1}{n-|V(M_n^{z,\lambda_{n,j}})|}}. 
\]
Now, almost surely $M_n^{z,\lambda_{n,j}}=M^{z,\lambda_{j}}$ for $n$ large, and the latter is almost surely finite, since for any fixed $c > 0$, Exponential$((n-1)/(n-c))\convdist$Exponential$(1)$, it follows that we may couple so that almost surely $\lambda_{n,j+1}=\lambda_{j+1}$ for $n$ sufficiently large. Furthermore, under the current conditioning, the end point $u_{n,j+1}$ of $f_{n,j+1}$ is uniformly distributed among those $v \in V(M_n^{z,\lambda_i})$ with $x(v) \le \lambda_{n,j}$, and it follows from (i) above that for $n$ large we may couple so that $u_{n,j+1}=u_{j+1}$. 

Conditional on $u_{n,j+1}$, the second endpoint $y_{n,j+1}$ of $f_{n,j+1}$ is uniformly distributed over the set 
\[
V(K_n)\setminus \pran{V(M_n^{z,\lambda_{n,j}})\} \cup \{y_{n,i}:0 \le i \le j, u_{n,i}=u_{n,j+1}\}}\, .
\]
This set has size between $n-|V(M_n^{z,\lambda_{n,j}})|-j-1$ and $n-V(M_n^{z,\lambda_{n,j}})$. 
Furthermore, we have $\rM_n^{z,\lambda_{n,j+1}} \ne \rM_n^{z,\lambda_{n,j}}$ precisely if $y_{n,j+1} \not \in \{v_i,i \le d_n(\lambda_{n,j+1})\}$, or in other words, precisely if $y_{n,j+1}$ is not joined by Prim's algorithm before time $d_n(\lambda_{n,j+1})$. To bound this probability, fix any $\alpha > 0$, and define the event $A_{n,\lambda_{j+1}}$ as in the proof of Lemma~\ref{lem:tnztotn}. Since $\lambda_{j+1}$ is almost surely finite, for $n$ sufficiently large we have $\p{A_{n,\lambda_{j+1}}} \le n^{-99}$. Furthermore, since almost surely  $\lambda_{j+1}=\lambda_{n,j+1}$ for $n$ large, conditional on $A_{n,\lambda_{j+1}}$, almost surely for all $n$ sufficiently large we have 
\[
1-\theta(\lambda_{n,j+1})-2\alpha< \frac{n-d_n(\lambda_{n,j+1})}{n-V(M_n^{z,\lambda_{n,j}})} < 
\frac{n-d_n(\lambda_{n,j+1})}{n-V(M_n^{z,\lambda_{n,j}})-j-1} < 
1-\theta(\lambda_{n,j+1})+2\alpha\, .
\]
Since $\alpha> 0$ is was arbitrary, it follows by (iii) that we may couple so that almost surely, for $n$ sufficiently large, $\rM^{z,\lambda_{n,j+1}}_n=\rM^{z,\lambda_{n,j}}_n$ if and only if $M^{z,\lambda_{j+1}}=M_{z,\lambda_j}$. 

Finally, given that $y_{n,j+1} \not \in \{v_i,i \le d_n(\lambda_{n,j})\}$, the vertices in $V(M_n^{z,\lambda_{n,j+1}})\setminus V(M_n^{z,\lambda_{n,j}})$ are precisely those of the component of $K_n^{\lambda_{n,j+1}}$ containing $y_{n,j+1}$.  
For $\alpha > 0$ sufficiently small, conditional on $A_{n,\lambda_j}$,  
since $\lambda_{n,j+1}(1-\lambda_{n,j+1}) < 1$, the restriction of $K_n^{\lambda_{j+1}}$ to the complement of $\{v_i,i \le d_n(\lambda_{n,j})\}$ forms a subcritical random graph. It is then standard that the component 
containing $y_{n,j+1}$ asymptotically dominates a $\pgw(\lambda_{n,j+1}(1-\theta(\lambda_{n,j+1})-2\alpha))$ and is asymptotically dominated by a $\pgw(\lambda_{n,j+1}(1-\theta(\lambda_{n,j+1})+2\alpha))$. Since $\lambda_{n,j+1}^*=\lambda_{n,j+1}(1-\theta(\lambda_{n,j+1}))$, it follows from (ii) that we may couple so that almost surely $M^{z,\lambda_{j+1}}=M_n^{z,\lambda_{n,j+1}}$ for $n$ large. Finally, by the definition of $\rK_n$ and of the trees $\rM_n^{z,\lambda}$, the edge weights of the new subtree in $\rM_n^{z,\lambda_{n,j+1}}$ are independent exponentials conditioned to have value at most $\lambda_{n,j+1}$, which together with (iv) immediately allows us to extend the coupling to $\rM^{z,\lambda_{j+1}}$ and $\rM_n^{z,\lambda_{n,j+1}}$. This completes the proof. 
\end{proof}

\section{ {\bf Volume growth in $\rM$: a proof of Theorem~\ref{thm:volume}}}\label{sec:volgorm}
\subsection{The upper bound from Theorem~\ref{thm:volume}.}
Recall that by our construction of $\rM$ from $\rT$, each vertex $u \in V(M)$ has a {\em start time} $x(u)$, which is the largest weight on the unique infinite path in $M$ leaving $u$. 
The removal of all edges of $T$ separates $M$ into a forest containing infinitely many trees. Each such tree is naturally rooted at some vertex $v \in V(T)$: we denote this tree $\cM_v$, and write $N_v=|V(\cM_v)|$ for its size. Also, for $\nu > 1$ we write $\cM_v(\nu)$ for the subtree of $\cM_v$ induced by those nodes $w$ with $x(w) \le \nu$, and write $N_v(\nu)$ for the size of this subtree. In particular, we have $\cM_v(\infty)=\cM_v$. 

Now, given $\nu > \lambda > 1$ and an integer $k \ge 1$, write 
\[
n_k(\lambda,\nu) = \int_{\lambda < x_1 < \ldots < x_k < \nu} \prod_{i=1}^k \frac{1-\theta(x_i)}{1-x_i^*}~\d x_1 \ldots \d x_k\, ,
\]
and set $n_0(\lambda,\nu)=1$. 
\begin{prop}\label{prop:nvdist}
Fix $v \in V(U)$ and $\lambda > 1$. Then 
for any $\nu \in [\lambda,\infty]$ we have 
\[
\Cexp{N_v(\nu)}{v \in V(T),x(v)=\lambda}= \sum_{k \ge 0} n_k(\lambda,\nu) \, .
\]
\end{prop}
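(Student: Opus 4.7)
The plan is to derive an integral equation for $\mu(\lambda,\nu) := \Cexp{N_v(\nu)}{v\in V(T),\, x(v)=\lambda}$ using the branching structure of the Poisson Galton-Watson aggregation process, and then iterate it to recover $\sum_{k\ge 0} n_k(\lambda,\nu)$.

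First I would observe that, by the construction of $\rM$ from $\rT$ in Section~\ref{sec:pwag}, the aggregation subtree $\cM_v$ rooted at $v$ depends on $v$ only through its activation time $x(v)=\lambda$, and is independent of the rest of $\rT$. Once $v$ becomes active at time $\lambda$, PGW subtrees attach to $v$ according to an inhomogeneous Poisson process in time with intensity $(1-\theta(s))\,ds$ on $(\lambda,\infty)$; each attaching subtree at time $s$ is $\pgw(s^*)$-distributed, all of its vertices acquire activation time $s$, and each such vertex in turn spawns an independent aggregation governed by the same rules. Thus $\mu(\lambda,\nu)$ is just the expected size of the aggregation rooted at an active vertex with activation time $\lambda$, restricted to vertices with activation time at most $\nu$.

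Conditioning on the first layer of attachments and exploiting the independence of subsequent aggregations, I would derive
\[
\mu(\lambda,\nu) \;=\; 1 \;+\; \int_\lambda^\nu \frac{1-\theta(s)}{1-s^*}\,\mu(s,\nu)\,ds. \qquad (\ast)
\]
The ``$1$'' accounts for $v$ itself. For the integrand, an attaching subtree of size $S$ arriving at time $s$ deposits $S$ new vertices, each with activation time $s$ and each spawning its own independent further aggregation of expected size $\mu(s,\nu)$ (including itself); since the further aggregations are disjoint, the expected contribution per attaching event is $\E{S}\cdot\mu(s,\nu) = \mu(s,\nu)/(1-s^*)$ using $\E{|\pgw(s^*)|}=1/(1-s^*)$ from \eqref{eq:meanvar}. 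Integrating against the Poisson intensity $(1-\theta(s))\,ds$ over $(\lambda,\nu]$ gives $(\ast)$.

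Substituting $(\ast)$ into itself $k$ times exposes the $k$-th term of the expansion as precisely the iterated integral defining $n_k(\lambda,\nu)$; equivalently, $n_k(\lambda,\nu)$ is the expected number of vertices at ``level $k$'' in the aggregation, where level counts the number of attaching events separating the vertex from $v$. The main technical point is controlling the remainder in this expansion (particularly for $\nu=\infty$), for which I would use the bound $n_k(\lambda,\nu)\le K(\lambda,\nu)^k/k!$ with $K(\lambda,\nu):=\int_\lambda^\nu (1-\theta(s))/(1-s^*)\,ds$; finiteness of $K$ for $\lambda>1$ follows since the integrand is continuous on $[\lambda,\infty)$ and $(1-\theta(s))\to 0$ exponentially while $1/(1-s^*)\to 1$ as $s\to\infty$, giving $\sum_{k\ge 0}n_k(\lambda,\nu)\le e^{K(\lambda,\nu)}<\infty$. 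Combining this with monotone convergence for the level decomposition $N_v(\nu)=\sum_{k\ge 0}\#\{\text{level-}k \text{ vertices}\}$ yields the claimed identity.
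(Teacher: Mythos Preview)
Your proof is correct and is essentially the same as the paper's. Both arguments rest on the level decomposition $N_v(\nu)=\sum_{k\ge 0} N_v^k(\nu)$ and the identification $\E{N_v^k(\nu)\mid v\in V(T),\,x(v)=\lambda}=n_k(\lambda,\nu)$; the paper proves this identity by a direct induction on $k$, while you obtain it by iterating the Volterra equation $(\ast)$ and then invoking the same level interpretation together with monotone convergence. The additional remainder bound $n_k(\lambda,\nu)\le K(\lambda,\nu)^k/k!$ you give (in fact this is an equality, by symmetry of the integrand) is exactly the estimate the paper uses later in Proposition~\ref{prop:nkgrowthbd}, so your argument also anticipates the finiteness needed for Corollary~\ref{cor:nvdist}.
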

Before proceeding to the proof, we note the following corollary.
\begin{cor}\label{cor:nvdist}
$\rM$ is almost surely one-ended. 
\end{cor}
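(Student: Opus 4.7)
The plan is to apply Proposition~\ref{prop:nvdist} to show that each off-backbone subtree $\cM_v$ (for $v \in V(T)$) is almost surely finite, and then to leverage the one-endedness of $\rT$ itself.

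First I would evaluate $\E{N_v \mid v \in V(T),\, x(v) = \lambda}$ in closed form in the case $\nu = \infty$. Setting $f(x) = (1-\theta(x))/(1-x^*)$, the integrand defining $n_k(\lambda,\infty)$ is symmetric in $(x_1,\ldots,x_k)$, so the standard simplex-to-cube identity gives
\[
n_k(\lambda, \infty) = \frac{1}{k!}\left(\int_\lambda^\infty f(x)\,\mathrm{d}x\right)^k,
\]
and summing over $k \ge 0$ yields $\E{N_v \mid v\in V(T), x(v) = \lambda} = \exp\bigl(\int_\lambda^\infty f(x)\,\mathrm{d}x\bigr)$.

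Second, I would check that the integral is finite for every $\lambda > 1$. Using $1-\theta(x) = e^{-x\theta(x)}$ and $x^* = x(1-\theta(x))$, the map $x \mapsto x^*$ is strictly decreasing on $(1,\infty)$, so $1 - x^* \ge 1 - \lambda^* > 0$ for all $x \ge \lambda$. The numerator satisfies $1-\theta(x) \le e^{-\theta(\lambda) x}$ and decays exponentially, so the integral is finite, and consequently $N_v < \infty$ almost surely under the given conditioning. Integrating out $x(v)$ and then taking a union bound over the countable set $V(T)$ shows that almost surely every $\cM_v$ is a finite tree.

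Finally I would conclude. By the explicit description in Section~\ref{sec:wwlprim}, $\rT$ decomposes into the finite Cayley trees $(P_i)_{i \ge 1}$ joined by the forward maximal edges, so the only infinite ray from $\emptyset$ in $\rT$ is the backbone through $R_1, S_2, R_2, S_3, \ldots$; hence $\rT$ is one-ended. Now any infinite ray in $\rM$ from $\emptyset$ must use infinitely many edges of $T$, for otherwise its tail would be confined to a single $\cM_v$, contradicting finiteness. Thus every infinite ray of $\rM$ visits $V(T)$ infinitely often and therefore tracks the unique end of $\rT$, so $\rM$ is one-ended. The main (and only non-routine) point of the argument is the closed-form computation of $\E{N_v \mid v \in V(T), x(v)=\lambda}$ together with the integrability of $f$ at infinity; once these are in hand, the topological conclusion about ends is immediate.
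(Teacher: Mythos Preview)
Your proof is correct and follows the same overall architecture as the paper's: apply Proposition~\ref{prop:nvdist} with $\nu=\infty$, show the resulting expectation is finite so that each $\cM_v$ is almost surely finite, and then invoke the one-endedness of $T$.

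The difference lies in how finiteness of $\sum_{k\ge 0} n_k(\lambda,\infty)$ is established. The paper defers to Proposition~\ref{prop:nkgrowthbd} (a forward reference, whose proof is somewhat involved because it is designed to give quantitative bounds needed later for volume growth). You instead observe the exact identity
\[
n_k(\lambda,\infty)=\frac{1}{k!}\left(\int_\lambda^\infty \frac{1-\theta(x)}{1-x^*}\,\d x\right)^k,
\]
valid because the integrand is symmetric, and sum to obtain $\exp\bigl(\int_\lambda^\infty f\bigr)$; then finiteness follows from the easy bounds $1-x^*\ge 1-\lambda^*>0$ and $1-\theta(x)=e^{-x\theta(x)}\le e^{-\theta(\lambda)x}$. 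This is a cleaner and more self-contained route for the purposes of the corollary, and in fact yields $\sum_k n_k(\lambda,\infty)\asymp 1/(\lambda-1)$, sharper than what Proposition~\ref{prop:nkgrowthbd} states.

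One small remark on the final topological step: your phrasing ``visits $V(T)$ infinitely often'' is weaker than what you actually prove. Once a simple ray leaves $V(T)$ into some $\cM_v\setminus\{v\}$ it can never return (the only connection back to $T$ is through $v$, already visited), so the ray must lie \emph{entirely} in $T$ and hence coincide with the unique ray of $T$. Your conclusion is correct; the argument just establishes more than you claim.
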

\begin{proof}
Applying the proposition with $\nu=\infty$ we have 
\[
\Cexp{N_v}{v \in V(T),x(v)=\lambda}= \sum_{k \ge 0} n_k(\lambda,\infty)\, ,
\]
which is finite by Proposition~\ref{prop:nkgrowthbd}, below. Since $T$ is a subtree of $U$, and the latter has countably many nodes, it follows that $N_v$ is almost surely finite for all $v \in V(T)$. Since $T$ is one-ended, the corollary follows. 
\end{proof}
\begin{proof}[Proof of Proposition~\ref{prop:nvdist}]
Given $u \in V(M)$, for $k \ge 0$ we say that $u$ has {\em level k} in $M$ if on the shortest path from $u$ to $T$ there are $k+1$ distinct activation times. In other words, level zero nodes are nodes of $T$, level one nodes belong to trees that attach directly to $T$ in the Poisson Galton-Watson aggregation process, and so on. We write $\cM_v^k(\nu)$ for the nodes in $\cM_v(\nu)$ with level $k$ and write $N_v^k(\nu)$ for the number of such nodes. We claim that for all $u \in V(U)$ and all $k \ge 0$ we have 
\begin{equation}\label{eq:nvdist_toprove}
\Cexp{N_v^k(\nu)}{v \in V(T),x(v)=\lambda}=n_k(\lambda,\nu)\, ,
\end{equation}
from which the Proposition immediately follows. The case $k=0$ of (\ref{eq:nvdist_toprove}) is trival. By the definition of $\rM$, the arrival times of connections to $v$ form a Poisson process with rate $(1-\theta(t))$. Furthermore, when a tree attaches at time $t$, it has distribution $\pgw(t^*)$ and so its expected size is $1/(1-t^*)$. It follows that given that $v \in V(T)$ and $x(v)=\lambda$, 
\[
\Cexp{N^1_v(\nu)}{v \in V(T),x(v)=\lambda} = \int_{\lambda}^\nu \frac{(1-\theta(t))}{1-t^*}~\d t\, ,
\]
which handles the case $k=1$. Next, fix $k \ge 1$ and a node $w \in V(U)$. 
Again by the definition of $\rM$, for any $x_k \in (\lambda,\nu)$, we have 
\begin{align*}
\Cexp{\left|\{u \in \cM_v^{k+1}(\nu)|: w~\mbox{an ancestor of}~u\}\right|}{w \in \cM_v^k(\nu),x(w)=x_k} &= \int_{x_k}^\nu \frac{(1-\theta(t))}{1-t^*}~\d t\,  \\
& = n_1(x_k,\nu)\, .
\end{align*}
By induction, the conditional density of nodes in $\cM_v^k(\nu)$ with $x(u)=x$, given that $v \in V(T)$ and $x(v)=\lambda$, is $\frac{\d}{\d x} n_k(\lambda,x)$. We thus have 
\[
\Cexp{N_v^{k+1}(\nu)}{v \in V(T),x(v)=\lambda} = \int_{\lambda}^\nu n_1(x,\nu) \cdot \frac{\d}{\d x} n_k(\lambda,x) \d x
= n_{k+1}(\lambda,\nu)\, ,
\]
where the final equality follows from the definition of $n_{k+1}(\lambda,\nu)$. This proves (\ref{eq:nvdist_toprove}) by induction and so proves the proposition. 
\end{proof}

We next bound the growth of $n_k(\lambda,\infty)$. 
Notice that since $x_i^* = x_i(1-\theta(x_i))$ we may re-express $n_k(\lambda,\nu)$ as 
\[
n_k(\lambda,\nu) = \int_{\lambda < x_1 < \ldots < x_k < \nu} \prod_{i=1}^k \frac{x_i^*}{x_i(1-x_i^*)}~\d x_1 \ldots \d x_k\, .
\]
Since $x_ie^{-x_i}=x_i^*e^{-x_i^*}$ we may again re-express $n_k(\lambda,\nu)$, as 
\[
n_k(\lambda,\nu) = \int_{\lambda < x_1 < \ldots < x_k < \nu} \prod_{i=1}^k \frac{e^{-(x_i-x_i^*)}}{(1-x_i^*)}\, .
\]
\begin{prop}\label{prop:nkgrowthbd}
There exist constants $c,C>0$ such that for all $\lambda > 1$ with $\lambda - 1$ sufficiently small, 
\[
\frac{c}{(\lambda-1)\log(1/(\lambda-1))} \le \sum_{k \ge 0} n_k(\lambda,\infty) \le \frac{e^{C (\log(1/(\lambda-1)))^{1/2}}}{\lambda-1}\, .
\]
\end{prop}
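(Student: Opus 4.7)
The plan is to exploit the simplex structure of $n_k$ together with an algebraic identity that turns the integrand into a logarithmic derivative; this reduces the whole sum to a single closed-form expression.

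First, because the integrand in the definition of $n_k(\lambda,\infty)$ is symmetric in $x_1,\dots,x_k$ and the constraint $\lambda<x_1<\dots<x_k<\infty$ picks out a $1/k!$ fraction of $(\lambda,\infty)^k$, I would write
\[
n_k(\lambda,\infty)=\frac{1}{k!}F(\lambda)^k,\qquad F(\lambda):=\int_{\lambda}^{\infty}\frac{1-\theta(x)}{1-x^{*}}\,\d x,
\]
so that the series telescopes into
\[
\sum_{k\ge 0}n_k(\lambda,\infty)=\exp\bigl(F(\lambda)\bigr).
\]

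Second, I would identify the integrand as $(\log\theta)'$. Implicitly differentiating the PGW fixed-point relation $1-\theta(x)=e^{-x\theta(x)}$ yields
\[
\theta'(x)=\bigl(\theta(x)+x\theta'(x)\bigr)\bigl(1-\theta(x)\bigr),
\]
which rearranges (using $x^{*}=x(1-\theta(x))$) to
\[
\theta'(x)=\frac{\theta(x)\bigl(1-\theta(x)\bigr)}{1-x^{*}}.
\]
Hence the integrand is exactly $\theta'(x)/\theta(x)=(\log\theta)'(x)$, and since $\theta(\infty)=1$,
\[
F(\lambda)=\log\theta(\infty)-\log\theta(\lambda)=-\log\theta(\lambda).
\]
This produces the clean formula
\[
\sum_{k\ge 0}n_k(\lambda,\infty)=\frac{1}{\theta(\lambda)}.
\]

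Third, I would plug in the well-known asymptotic $\theta(1+\eps)=2\eps+O(\eps^{2})$ as $\eps\downarrow 0$ (recovered, for instance, by expanding $1-\theta=e^{-x\theta}$ to second order, and consistent with the paper's observation that $\theta'(1+\eps)=2+o(\eps)$). This gives
\[
\sum_{k\ge 0}n_k(\lambda,\infty)=\frac{1}{\theta(\lambda)}=\frac{1}{2(\lambda-1)}\bigl(1+O(\lambda-1)\bigr),
\]
which dominates the required lower bound $c/((\lambda-1)\log(1/(\lambda-1)))$ and is dominated by the required upper bound $e^{C\sqrt{\log(1/(\lambda-1))}}/(\lambda-1)$, both with enormous room to spare.

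The only non-obvious step is spotting the logarithmic-derivative identity; once found, the argument is essentially a one-line computation. The slack in the stated bounds presumably reflects a cruder approach based on splitting the integral into a region near $x=1$ (where $f(x)\sim 1/(x-1)$) and a decaying tail, but the closed-form route via $1/\theta(\lambda)$ avoids any such dissection entirely.
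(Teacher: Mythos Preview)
Your proof is correct, and in fact sharper than what the paper proves: you obtain the exact closed form $\sum_{k\ge 0} n_k(\lambda,\infty)=1/\theta(\lambda)\sim 1/(2(\lambda-1))$, whereas the stated proposition has a $\log(1/(\lambda-1))$ loss in the lower bound and an $e^{C\sqrt{\log(1/(\lambda-1))}}$ excess in the upper bound. Your two key observations --- that the simplex integral factorises as $F(\lambda)^k/k!$, and that the integrand equals $\theta'/\theta$ via differentiating $1-\theta=e^{-x\theta}$ and using $x^*=x(1-\theta)$ --- are both correct and combine to give $\exp(-\log\theta(\lambda))=1/\theta(\lambda)$.

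The paper takes exactly the cruder route you anticipate at the end of your write-up. It first notes the multiplicativity $\sum_k n_k(\lambda,\infty)=\big(\sum_k n_k(\lambda,\nu)\big)\big(\sum_\ell n_\ell(\nu,\infty)\big)$, then bounds the tail factor by $\exp\big(e^{-(\nu-\nu^*)}/(1-\nu^*)\big)$ using the crude estimate $f(x)\le e^{-(x-x^*)}/(1-\nu^*)$ on $(\nu,\infty)$, and bounds the near-1 factor by $\big((\nu-1)/(\lambda-1)\big)^{1+O(\nu-1)}$ using the asymptotic $f(x)\sim 1/(x-1)$. Optimising over $\nu$ (at $(\nu-1)\asymp (\log(1/(\lambda-1)))^{-1/2}$ for the upper bound and $(\nu-1)\asymp (\log(1/(\lambda-1)))^{-1}$ for the lower bound) produces the logarithmic corrections in the statement. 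Your logarithmic-derivative identity bypasses all of this and would, incidentally, tighten the volume-growth upper bound for $\rM$ that the paper derives downstream from this proposition.
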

\begin{proof}
First, for any fixed $\nu > \lambda$, we may rewrite the sum under consideration as 
\[
\sum_{k \ge 0} n_k(\lambda,\nu) \cdot \sum_{\ell \ge 0} n_{\ell}(\nu,\infty), 
\]
which will be useful in what follows. 
We begin by proving an upper bound. Since $x^*$ decreases as $x$ increases, for $k \ge 1$ we have 
\begin{align*}
n_k(\nu,\infty) 
		& \le \frac{1}{k!}\frac{1}{(1-\nu^*)^{k}} \int_{(x_1,\ldots,x_k) \in (\nu,\infty)^k}  \prod_{i=1}^k e^{-(x_i-x_i^*)}~\d x_1 \ldots \d x_k\, . \\
		& \le \frac{1}{k!}\frac{1}{(1-\nu^*)^{k}} e^{-k (\nu-\nu^*)}\, . 
\end{align*}
Since $n_0(\nu,\infty)=1$ for all $\nu$, we thus have 
\begin{equation}\label{eq:tailupper}
1 \le \sum_{k \ge 0} n_k(\nu,\infty) \le \exp\pran{\frac{e^{-(\nu-\nu^*)}}{1-\nu^*}}\, .
\end{equation}
Next, recall that $(1+\eps)^*=1-\eps+O(\eps^2)$ as $\eps \downarrow 0$. 
It follows that as $\nu \downarrow 1$, we have 
\begin{align*}
n_k(\lambda,\nu)	& = \int_{\lambda < x_1 < \ldots < x_k < \nu} \prod_{i=1}^k \pran{ e^{-2(x_i-1)+O((x_i-1)^2)}\frac{1}{x_i-1+O((x_i-1)^2)} }~\d x_1 \ldots \d x_k\, , \\
	& = \pran{ 1+ O(\nu-1)}^k \int_{\lambda < x_1 < \ldots < x_k < \nu} \prod_{i=1}^k \pran{\frac{1}{x_i-1} }~\d x_1 \ldots \d x_k  \\
& = \frac{\pran{ 1+ O(\nu-1)}^k}{k!} \pran{\ln \pran{\frac{\nu-1}{\lambda-1}}}^k\, ,
\end{align*}
where the constant implicit in the notation $O(\nu-1)$ may be chosen uniformly over $\nu \in (1,\nu_0)$ for any fixed $\nu_0 > 1$, and uniformly in $k$ and in $\lambda \in (1,\nu)$. 
We thus have 
\[
\sum_{k \ge 0} n_k(\lambda,\nu) = \exp\pran{ (1+O(\nu-1))\ln\pran{\frac{\nu-1}{\lambda-1}}} = \pran{\frac{\nu-1}{\lambda-1}}^{1+O(\nu-1)}\, .
\]
Combined with (\ref{eq:tailupper}), we then obtain that for fixed $\nu_0 > 1$, for any $1 < \lambda < \nu < \nu_0$, 
\begin{align*}
\sum_{k \ge 0} n_k(\lambda,\infty) & \le \exp\pran{\frac{e^{-(\nu-\nu^*)}}{1-\nu^*}}\cdot \pran{\frac{\nu-1}{\lambda-1}}^{1+O(\nu-1)} \\
\sum_{k \ge 0} n_k(\lambda,\infty) & \ge \pran{\frac{\nu-1}{\lambda-1}}^{1+O(\nu-1)}\, .
\end{align*}
For given $\lambda > 1$ with $\lambda > 1$ small, we may optimize the lower bound (up to constants) by taking $(\nu-1)=(\log(1/(\lambda-1)))^{-1}$. 
A straightforward calculation then yields that there is $c > 0$ such that for all $\lambda > 1$ small enough, 
\[
\sum_{k \ge 0} n_k(\lambda,\infty) \ge \frac{c}{(\lambda-1)\log(1/(\lambda-1))}\, .
\]
The upper bound is optimized by taking $(\nu-1)$ of order $((\log(1/(\lambda-1)))^{-1/2})$, which then yields that there is $C>0$ 
such that for all $\lambda > 1$ small enough, 
\[
\sum_{k \ge 0} n_k(\lambda,\infty) \le e^{C (\log(1/(\lambda-1)))^{1/2}} \cdot \frac{1}{\lambda-1}\, .
\]
This completes the proof. 
\end{proof}

We conclude the section by proving the upper bound from Theorem~\ref{thm:tvolume}. 
In the proof we exploit the description of $\rT$ from Section~\ref{sec:wwlprim}, and invite the reader to recall the relevant definitions. Recall also that for $z > 1$ we write $I(z)=\min\{i:X_i \le z\}$. 

Given $r > 1$ write $E_r$ for the event that either $X_{I(1+1/r)} \le 1+1/(r\log^2 r)$ or $d_{\rT}(\emptyset,R_{I(1+1/r)}) < r/\log^2 r$ or $\sum_{i: X_i >1+ 1/(r\log^2 r)} Z_i > r^2 \log^8 r$. 
By Lemmas~\ref{lem:zisizes} and~\ref{lem:pond-dist-lower}, for all $r$ sufficiently large we have we have 
\[
\p{E_r} \le \frac{\CCdistlb}{\log^{4/3} r} + \CCzi \log (r\log^2 r)\cdot e^{-\cczi \log^2 r}
< \frac{2C_2}{\log^{4/3} r}\, .
\]
By Borel-Cantelli it follows that, 
writing writing $L=\sup\{j:E_{2^j}~\mbox{occurs}\}$, we have that $L$ is almost surely finite. 

Given a node $v \in V(T)$ if $v \in V(P_i)$ then $x(v) = X_i$. 
It follows that 
\[
\E{\sum_{v \in V(P_i)} N_v~|~ P_i,X_i} = \sum_{k \ge 0} n_k(X_i,\infty).  
\]
Also, the $X_i$ are decreasing, and $n_k(\lambda,\infty)$ is decreasing in $\lambda$, from this we obtain 
\begin{align}
& \quad \E{\left.\sum_{i: X_i >1+1/(r\log^2 r)} \sum_{v \in  V(P_i)}N_v \right| ((X_j,P_j),j \ge 1)}\nonumber\\
 = & \quad 
\sum_{i: X_i > 1+1/(r\log^2 r)} \pran{|V(P_i)| \cdot \sum_{k \ge 0} n_k(X_i,\infty)}   \nonumber\\
\le & 
\quad \sum_{k \ge 0} n_k(1+1/(r\log^2 r),\infty) \cdot \sum_{i: X_i > 1/(r\log^2 r)} Z_i \, ,\label{eq:some-bound}
\end{align}
where in the final inequality werecall that $Z_i=|V(P_i)|$. 

For fixed $j>1$, if $j > L$ then by the definition of the event $E_{2^j}$ we have  $B_{\rM}(\emptyset,2^j/j^2) \subset \bigcup_{i: X_i >1+ 1/(j^2 2^j)} V(P_i)$, 
and $\sum_{i: X_i > 1+1/(r\log^2 r)} Z_i \le j^8 2^{2j}$. 
Applying (\ref{eq:some-bound}), it then follows that 
\begin{align}
\E{|B_{\rM}(\emptyset,2^j/j^2)| ~|~ j > L} & 
\le 
j^8 2^{2j} \sum_{k \ge 0} n_k(1+1/(j^22^j)),\infty)\nonumber\\
&\le
j^8 2^{2j}\cdot \frac{e^{C \log^{1/2}(j^22^j)}}{1/j^2 2^j} \nonumber\\
& \le (2^j/j^2)^3 \cdot e^{C' \log^{1/2}(2^j/j^2)}\, ,\label{eq:some-other-bound}
\end{align}
the second-to-last inequality by the upper bound in Proposition~\ref{prop:nkgrowthbd}, 
and the last inequality by a suitable choice of $C'$. 

Finally, if $\limsup_{r\to\infty}|B_{\rM}(\emptyset,r)|/(r^3 e^{3C' \log^{1/2} r}) \ge 1$ then 
for infinitely many $j \in \N$, we must have 
$|B_{\rM}(\emptyset,2^j/j^2)| >  (2^j/j^2)^3 \cdot e^{2C' \log^{1/2} (2^j/j^2)}$. 
On the other hand, for any $\ell \in \N$, by (\ref{eq:some-other-bound}) and the conditional Markov inequality we have 
\begin{align*}
&\quad  \p{|B_{\rM}(\emptyset,2^j/j^2)| >  (2^j/j^2)^3 \cdot e^{2C' \log^{1/2} (2^j/j^2)}~\mbox{for infinitely many}~j \in N} \\
\le &\quad  \p{L > \ell} + \sum_{j > \ell} \p{\left.|B_{\rM}(\emptyset,2^j/j^2)| >  (2^j/j^2)^3 \cdot e^{2C' \log^{1/2} (2^j/j^2)}~\right|~ j > L} \\
\le &\quad   \p{L > \ell} + \sum_{j > \ell} e^{-C' \log^{1/2}(2^j/j^2)}\, ,
\end{align*}
and since $L$ is almost surely finite and the sum is convergent, the latter can be made arbitrarily small by choosing $\ell$ large. It follows that 
\[
\p{\limsup_{r\to\infty}|B_{\rM}(\emptyset,r)|/(r^3 e^{3C' \log^{1/2} r}) \ge 1}\, ,
\]
which establishes the upper bound from Theorem~\ref{thm:volume}. 

It is tempting to try to establish a lower bound in a similar manner, using the lower bound from Proposition~\ref{prop:nkgrowthbd}. However, this proposition only provides information about the expected size of the subtrees $\cM_v$. For our volume growth upper bound we have used total size of each subtree, but for a lower bound information about volume growth within these subtrees would be required.  

In the following subsection, we state, without proof, a proposition by which volume growth lower bounds for $\rT$ can be used to obtain corresponding lower bounds for $\rM$. This proposition, is then immediately used to prove the lower bound from Theorem~\ref{thm:volume}; the proof of the proposition then occupies the remainder of the paper. 

\subsection{A key proposition, relating volume growth bounds for $\rM$ and for $\rT$}
\label{sec:alowerbound} 
Recall the definitions of  $M_n(k)$ and 
of $g_n(j,z)$ from Section~\ref{sec:futuremax}, 
and of $d_n(j,z)$ and of $\cF_n(j,z)$ from Section~\ref{sec:mainproof}. 

Fix $z > 1$, and let $k=k(n)$ satisfy $k(n)\ge \log^5 n$ and $k(n)=o(n)$. 
In what follows, we will write $d=d_n(k(n),z)$ and $g=g_n(k(n),z)$ for succinctness. 
Note that while $d=d_n(k,z)$ is random, it is a stopping time for the filtration $\{\cF_n(j,z),1 \le j \le n\}$ and so $\cF_{n}(d,z)$ is a $\sigma$-algebra - see \cite{williams91probability}, A 14.1.

The key to our lower bound is the following estimate. 
Let $H_n=H_n(k(n),z)$ be the forest obtained from $M_n$ by removing the edges of $M_n(d)$, so $H_n$ has edges $e_d,\ldots,e_{n-1}$. Note that this forest consists of $d$ connected components (trees), which we view as rooted at $v_1,\ldots,v_{d}$. For $1 \le j \le d$, we write $U_{n,j}=U_{n,j}(k(n),z)$ for the vertex set of the component of $H_n$ rooted at $v_j$, and for $r \ge 1$ write $U_{n,j}^{r}$ for the set of vertices of $U_{n,j}$ whose distance to $v_j$ (in $H_n$) is at most $r$.
\begin{prop} \label{prop:lowerboundkey}
There is an absolute constant $M > 1$ such that the following holds. For all $z > 1$ with $z-1$ sufficiently small, 
for any random subset $S$ of $\{v_j, g < j \le d\}$ that is $\cF_{n}(d,z)$-measurable, and any $A > 1$, we have 
\[
\p{\sum_{i \in S} \left|U_{n,i}^{M/(z-1)}\right| \ge \frac{A}{\theta(z)^3}}
\ge \pran{\p{|S| \ge \frac{3A}{\theta(z)^2}} - o_n(1)}\pran{1-\frac{3M}{A} - 4\theta(z)}\, .
\]
\end{prop}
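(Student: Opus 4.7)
The strategy is a Paley–Zygmund-type second moment argument, conditional on $\cF_n(d,z)$ (and in particular on $|S|$). Write $X = \sum_{i \in S}|U_{n,i}^{M/(z-1)}|$. I aim to establish (a) a first-moment lower bound $\E{X \mid \cF_n(d,z)} \gtrsim |S|/\theta(z)$, and (b) a conditional variance bound good enough that a one-sided Chebyshev (Cantelli) inequality produces the claimed factor $(1 - 3M/A - 4\theta(z))$ for the event $\{X \ge A/\theta(z)^3\}$ on $\{|S| \ge 3A/\theta(z)^2\}$. Thus the desired probability lower bound factorises as the conditional probability from (b) times $\p{|S| \ge 3A/\theta(z)^2}$ up to the unavoidable $o_n(1)$ from the finite-$n$ comparison.

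For the first-moment estimate, the key ingredients are: (i) each vertex $v_j$ with $g < j \le d$ lies in the giant component of $K_n^z$, by the analysis leading to (\ref{eq:gzupper}); (ii) a symmetry argument analogous to the proof of Lemma~\ref{lem:tntot} shows that the conditional distribution of $U_{n,j}$ viewed from $v_j$ converges, as $n \to \infty$, to the law of a subtree $\cM_v$ in the Poisson Galton--Watson aggregation process rooted at a vertex with activation time close to $z$; and (iii) by Propositions~\ref{prop:nvdist} and~\ref{prop:nkgrowthbd}, $\E{|\cM_v|}$ is of order $1/(z-1) \sim 1/\theta(z)$, while $|\cM_v|$ is almost surely finite by Corollary~\ref{cor:nvdist}. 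Taking $M$ sufficiently large, a Markov bound on the portion of $\cM_v$ at graph distance exceeding $M/(z-1)$ from $v$ then shows that the truncated ball $U_{n,j}^{M/(z-1)}$ captures a positive fraction of $|\cM_v|$ except on an event of probability $O(1/M)$; this is ultimately what produces the $3M/A$ contribution to the conclusion.

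For the variance estimate, the essential input is approximate conditional independence of $|U_{n,j_1}^{M/(z-1)}|$ and $|U_{n,j_2}^{M/(z-1)}|$ for distinct $j_1,j_2 \in S$. This should follow from a two-vertex analogue of the local convergence arguments in Section~\ref{sec:mainproof}, by coupling the MST construction beyond step $d$ with two independent copies of the aggregation process rooted at $v_{j_1}$ and $v_{j_2}$; this is the finite-$n$ manifestation of the ``three-vertex'' second-moment calculation alluded to in the brief sketch of Section~\ref{sec:briefsketch} (the root plus two typical vertices of the current pond). Combined with tail bounds on $|B_{\cM_v}(v,r)|$ derivable from the stretched-exponential estimates of Section~\ref{sec:formax} (particularly Lemmas~\ref{lem:bybounds} and~\ref{lem:zisizes}), this would yield the needed control on $\V{X \mid \cF_n(d,z)}$.

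The main obstacle will be making the near-independence claim rigorous under the conditioning on $\cF_n(d,z)$: the vertices in $S$ all lie in the same component of $K_n^z$, and Prim's algorithm attaches the remaining small components of $K_n^z$ to the $v_j$'s in a way that couples the subtrees both through the global constraint $\sum_{j \le d}|U_{n,j}| = n$ and through the shared randomness of the exploration. Managing the finite-$n$ errors so that they line up precisely with the $o_n(1)$ and with the $4\theta(z)$ term (the latter absorbing residual order-$\theta(z)$ corrections that arise from comparison with the infinite-limit aggregation process and from the slight difference between $z$ and the true activation times of the $v_j$) will require the most careful bookkeeping of the proof.
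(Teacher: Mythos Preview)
Your high-level strategy---a conditional second-moment argument on $\cF_n(d,z)$---matches the paper, but the implementation you sketch has a genuine gap and misses the paper's key device.

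The gap is in how you propose to obtain the moment estimates. You invoke local weak convergence of $U_{n,j}$ (viewed from $v_j$) to a subtree $\cM_v$ of the limiting aggregation process, and then quote Propositions~\ref{prop:nvdist} and~\ref{prop:nkgrowthbd} for $\E{|\cM_v|}$. But local weak convergence gives convergence of bounded local functionals, not of $\E{|U_{n,j}|}$ or $\E{|U_{n,j}|^2}$; you would need uniform integrability, which you do not establish. Worse, your near-independence claim for $|U_{n,j_1}|$ and $|U_{n,j_2}|$ is exactly the hard part---you correctly flag it as the ``main obstacle'', but offer no mechanism beyond a vague two-vertex coupling. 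The paper's heuristic section~7.3 explains why this is delicate: each $|U_{n,j}|$ is asymptotically $|\pgw(z^*)|$-distributed, hence highly non-concentrated, with $\E{|U_{n,j}|^2}$ of order $\gamma^{-3}$. Direct control of the cross-terms $\E{|U_{n,j_1}||U_{n,j_2}|}$ under the conditioning on $\cF_n(d,z)$ is not straightforward.

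The paper avoids all of this by inverting the viewpoint. Rather than looking outward from each $v_\ell \in S$, it starts from each vertex $u \notin V(M_n(d))$ and runs a modified exploration \zpu\ (BFS within components of $K_n^z$, Prim between them) until first hitting $M_n(d)$ at some vertex $\alpha(u)$. One then shows directly in the finite model that $\tau_u = |\cT(u)|$ lies in $[\gamma^{-2}/C, C\gamma^{-2}]$ with diameter $\le C/\gamma$ with probability $\ge 1/C$. The count $N_\ell = \#\{u: \alpha(u)=v_\ell, \tau_u \text{ good}\}$ lower-bounds $|U_{n,\ell}^{C/\gamma}|$, and the second moment of $N_S = \sum_{v_\ell \in S} N_\ell$ is controlled by a clean symmetry: given that $u \in N_\ell$, a second vertex $u'$ either lies in $\cT(u)$ (probability $\lesssim |\cT(u)|/(\theta(z)n)$) or its modified exploration must hit $\cT(u)$ before $M_n(d)$, which again has probability $\lesssim |\cT(u)|/(\theta(z)n)$. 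This reverse exploration is the missing idea in your proposal; it replaces the need for any limiting comparison or coupling with a direct, symmetric finite-$n$ calculation.
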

Before proving this proposition, we use it to complete the proof of the lower bound from Theorem~\ref{thm:volume}
\subsubsection{The lower bound from Theorem~\ref{thm:volume}}
Fix $r > 1$ and let $z=z(r)=1+(\log^{13} r)/r$. With $k(n)$ as above, continue to write  $g=g_n(k(n),z)$ and $d=d_n(k(n),z)$, and let 
\[
S = S(n,r) = B_{\rM_n(d)}(1,r) \setminus \{v_1,\ldots,v_g\}. 
\]
By definition, $S \subset \{v_{g+1},\ldots,v_d\}$ and $S$ is an $\cF_n(d,z)$-measurable set. To use Proposition~\ref{prop:lowerboundkey}, we need probability bounds on the lower tail of $|S|$. 

Fix any function $f(n)=o(n)$ with $f(n) \ge \log^5 n$. By Proposition~\ref{prop:abgw} we have 
\[
\rM_n(f(n) \convdist \rT
\]
in the local weak sense. Furthermore, by (\ref{eq:giantldbound}) and (\ref{eq:gzupper}) we have $\p{f(n) \le d} \ge 1-O(n^{-99})$, so for any $x > 1$, by Proposition~\ref{prop:vglower} we have 
\begin{align*}
\p{B_{\rM_n(d)}(1,r) < r^2/x} & \le \p{B_{\rM_n(f(n))}(1,r) < r^2/x} + O(n^{-99}) \\
					& \le \p{B_{\rT}(\emptyset,r) < r^2/x} + o_n(1) \\
					& < \CCglow \log r e^{-\ccglow x^{1/8}} + o_n(1)\, .
\end{align*}
Next, recall the definition of the forward maximal process $((X_i,Z_i),i \ge 1)$ and of the subtrees $P_i$ of $T$ from Section~\ref{sec:wwlprim}. 
Write $i = i(\rT,z) = \sup\{j: X_j \ge z-1\}$, and let $\rT^{(z)}$ be the sub-RWG of $\rT$ induced by the vertices in $P_1,\ldots,P_i$, so $\rT^{(z)}$ has $\sum_{j=1}^i |P_j| = \sum_{j=1}^i Z_j$ vertices. 
By Proposition~\ref{prop:abgw} and and (\ref{eq:gzupper}), we have $\rM_n(g) \convdist \rT^{(z)}$, so by Lemma~\ref{lem:zisizes}, for any $y > 1$, 
\begin{align*}
\p{g \ge \frac{yr^2}{\log^{13} r}} & = \p{g \ge \frac{y}{(z-1)^2}} \\
& \le \CCzi \log ((z-1)^{-1})e^{-\cczi y^{1/2}} 
+o_n(1) \\
& \le \CCzi  \log r e^{-\cczi y^{1/2}} + o_n(1)
\end{align*}
Taking $x=\log^{9} r$ and $y=\log^3 r$, for $r$ sufficiently large we have 
$r^2/x - yr^2/\log^{13} r \ge r^2/(2 \log^{9} r)$ and  $\frac{\CCzi}{2} \log r e^{-\cczi y^{1/2}} \le \CCglow \log r e^{-\ccglow x^{1/8}}$, so 
\[
\p{|S| \le \frac{r^2}{2 \log^{9} r}} \le 2 \CCglow \log r e^{-\ccglow x^{1/8}} + o_n(1)
\le  \frac{1}{r^2} + o_n(1)\, ,
\]
the last inequality holding for $r$ sufficiently large. 
Write $A = r^2 \theta(z)^2/(6\log^9 r)$. For $r$ large we have $\theta(z) \sim 2 (\log^{13} r)/r$, so $A \sim (\log^{2\cdot 13-9} r)/6$. It then follows from Proposition~\ref{prop:lowerboundkey} that for $r$ large, 
\begin{align*}
\p{\sum_{i \in S} \left|U_{n,i}^{M/(z-1)}\right| \ge \frac{A}{\theta(z)^3}} 
& \ge \pran{1-\frac{1}{r^2} - o_n(1)}\pran{1 - \frac{3M}{A} - 4\theta(z)} \\
& > 1 - \frac{19 M}{\log^{17} r} - o_n(1)\, .
\end{align*}
For $r$ large we have $A/\theta(z)^3 \ge r^3/(50 \log^{13+9} r)$, and also have   
and $M/(z-1) < r$ so $\bigcup_{i\in S} U_{n,i}^{M/(z-1)} \subset B_{\rM_n}(1,2r)$. 
It follows that 
\[
\p{|B_{\rM_n}(1,2r)| \ge \frac{r^3}{50 \log^{22} r}} > 1 - \frac{19 M}{\log^{17} r} - o_n(1)\, .
\]
By Theorem~\ref{thm:main}, the latter bound implies that for all $r$ sufficiently large, 
\[
\p{|B_{\rM}(0,2r) \ge  \frac{r^3}{50 \log^{22} r}} > 1 - \frac{19 M}{\log^{17} r}\, .
\]
Now write $r_i = e^i$ for $i \ge 1$.
Then $\sum_{i\ge 1} \log^{17} r_i  = \sum_{i \ge 1} i^{17} < \infty$ and it follows by Borel-Cantelli that 
\[
\p{|B_{\rM}(0,r_i)| <  \frac{r_i^{3}}{400 \log^{22} r_i}\mbox{ for infinitely many i}} = 0. 
\]
Finally, for $r \in (e^i,e^{i+1})$, if $|B_{\rM}(0,r_i)| \ge  \frac{r_i^{3}}{400 \log^{22} r_i}$ then 
$|B_{\rM}(0,r)| \ge \frac{r^3}{400e^3 \log^{22} r}$, so 
\[
\p{\liminf_{r \to \infty} \pran{|B_{\rM}(0,r)| \cdot  \frac{\log^{22} r_i}{r_i^{3}}}>0} = 1\, ,
\]
proving the lower bound from Theorem~\ref{thm:volume}. We now turn to the proof of Proposition~\ref{prop:lowerboundkey}, which is at the heart of the lower bound. 

\subsection{A heuristic argument for Proposition~\ref{prop:lowerboundkey}}

Fix $\eps > 0$ with $(1+\eps)\theta(z) < 1$, and let $E_{n,z}$ be the event that 
\[
1-\eps < \frac{d-g}{\theta(z)\cdot n} \le \frac{d}{\theta(z)\cdot n} < 1+\eps\, .
\]
By (\ref{eq:giantldbound}) and (\ref{eq:gzupper}), this event occurs with probability $1-O(n^{-99})$. We write $\Ehat{\cdot}$ as shorthand for the conditional expectation 
\[
\E{\ \cdot\ \I{E_{n,z}}~|~\cF_{n}(d,z)} = \E{\ \cdot ~|~\cF_{n}(d,z)} \I{E_{n,z}}\, ,
\]
and likewise write 
\[
\phat{\cdot} = \p{\cdot ,E_{n,z}~|~\cF_{n}(d,z)} = \p{\cdot ~|~\cF_{n}(d,z)} \I{E_{n,z}}\, ,
\]
each of the second equations holding since $E_{n,z} \in \cF_{n}(d,z)$. 

We now work conditional on $\cF_n(d,z)$. Using the notation $a \vee b = \max(a,b)$, for $1 \le i \le j \le n-1$  write
\[
w_n^+(i,j)=z\I{W_n(e_j)\ge z}\vee \max\{W_n(e_l), i \le l < j\}\, .
\]
 Then for $1 \le j \le d$, $\phat{v \in U_{n,j}}$ is a measurable function of the forward maximal weights $\{w_n^+(j,d),1 \le j \le d\}$. Note that on $E_{n,z}$ we have $d < n$, so $W_n(e_d) \ge z$ and thus $w^+(j,d) \ge z$ for all $1 \le j \le d$. Furthermore, $w^+(j,d) > z$ for $1 \le j \le g$. 
Also, $w_n^+(j,d)$ is decreasing in $1 \le j \le d$, and for $1 \le i < j \le d$ we have 
\[
\phat{v \in U_{n,u}} \leas \phat{v \in U_{n,j}}\, ,
\]
with almost sure equality holding on the event that $w_n^+(i,d)=w_n^+(j,d)$. 
In particular, since $w_n^+(j,d)=z$ for $g < j \le d$, for such $j$ we have 
\begin{align}
\phat{v \in U_{n,j}} & \geas \frac{1}{d} \I{E_{n,z}} \ge \frac{1}{(1+\eps) \theta(z) \cdot n}\I{E_{n,z}}\, , \label{stochasticordering}\\
\phat{v \in U_{n,j}} & \leas \frac{1}{d-g} \I{E_{n,z}} \le \frac{1}{(1-\eps) \theta(z) \cdot n}\I{E_{n,z}}\, . \nonumber\\
\end{align}
For $z$ close to $1$, say $z=1+\gamma$ for $\gamma > 0$ small, $\theta(z)$ is near $2\gamma$. Since, for fixed $\eps$,  the probability $\p{E_{n,z}^c}$ decays exponentially in $n$, it follows straightforwardly that 
for any fixed vertex $w$, 
\begin{equation}\label{eq:suggestive}
\E{|U_{n,j}|~|~w=v_j, g < j \le d} = (1+o_{\gamma \downarrow 0}(1)) \frac{1-2\gamma}{2\gamma} 
= \frac{1+o_{\gamma \downarrow 0}(1)}{2\gamma}\, .
\end{equation}
Together with the weak convergence result Proposition~\ref{prop:abgw}, the results of Proposition~\ref{prop:vglower} and of Lemma~\ref{lem:zisizes} suggest that $g$ is around $\gamma^{-2}$. 
Similarly, Theorems~\ref{thm:tlower} and \ref{thm:tupper} suggest that typically, a substantial fraction of the nodes in $M_n(g)$ have distance around $1/\gamma$ from the root $v_1$. If the trees $\{U_{n,w},w \in V(M_n(g))\}$ were typically of size $1/\gamma$ (which is plausible in light of (\ref{eq:suggestive})), and additionally were typically of diameter $O(1/\gamma)$, we would then obtain around $|V(M_n(g))|/\gamma \approx 1/\gamma^3$ nodes within distance $1/\gamma$ from the root $v_1$. 

There are two problems with this heuristic argument. First, (\ref{eq:suggestive}) does not apply to nodes of $M_n(g)$. Indeed, if $v_j \in V(M_n(g))$ and $w_n^+(v_j,d)=z' > z$ then the conditional expected size of $U_{n,v_j}$ is around $1/(z'-1)$, which may be much smaller than $\gamma$. We address this problem by instead considering a suitable collection of around $1/\gamma^2$ nodes of $\rM_n(d)$ that are not in $\rM_n(g)$, but that were added in the early stages of Prim's algorithm (shortly after $\rM_n(g)$ was built) and that also have distance around $1/\gamma$ from $v_1$. 

Second, and more importantly, the (identically distributed) random variables $|U_{n,v_j}|$, $g < j \le d$ are not concentrated; their distribution is asymptotically that of $|\pgw(z^*)|$, where $z^*$ is the dual parameter to $z$ and is near $1-\gamma$ for $\gamma=z-1$ small. In particular, (\ref{eq:meanvar}) then says that for such $j$, $\Ehat{U_{n,v_j}^2}$ is around $1/\gamma^3$. The correct picture is not that the $U_{n,v_j}$ are typically of size $1/\gamma$. Rather, the typical size is $O(1)$, but an approximately $\gamma$ proportion of the $\{U_{n,v_j},g<j\le d\}$ have size $1/\gamma^2$, and the latter typically have height of order $1/\gamma$. To capture this picture, and thereby prove a volume growth lower bound, we study the first and second moments of the sizes of a carefully chosen family of subtrees of the trees $\{U_{n,v_j},g<j\le d\}$. We now turn to details. 

\subsection{The proof of Proposition~\ref{prop:lowerboundkey}}
We continue to work conditional on $\cF_{n}(d,z)$. Fix a vertex $u \not\in V(M_n(d))$, and consider the following procedure, which we denote \zpu. The short description of the procedure is this: start a tree-building exploration procedure from $u$. Use Prim's algorithm for edges of weight greater than $z$, and breadth-first search for edges of weight less than $z$; stop the first time a vertex of $M_{n}(d)$ is added. For the sake of clarity, and to introduce some needed notation, we now explain the procedure more carefully. 

List the components of $K_n^z$ that are disjoint from $M_n(d)$ as $\cC=\cC(z)=(\hat{C}_i(z),i \ge 1)$. (There are only a finite number of such components, but we gloss this issue to avoid unnecessary notation.) Edges within the components of $\cC$ have weight at most $z$, whereas edges from these components to one another and to vertices in $M_n(d)$ have weight greater than $z$. 

The vertex $u$ lies in some component from $\cC$. Explore this component via breadth-first search 
(exploring the children of a given node in increasing order of label), and write $D_1(u)$ for the resulting breadth-first search tree. Next, for given $i \ge 1$, suppose that breadth-first search spanning trees $D_1(u),\ldots,D_i(u)$ of some set of components of $K_n^z$ have already been constructed. Add the smallest weight edge $e$ from one of $D_1(u),\ldots,D_i(u)$ to the rest of the graph (this edge has weight greater than $z$).
If the endpoint $v$ of $e$ not in $D_1(u),\ldots,D_i(u)$ is {\em not} a vertex of $M_n(d)$, then 
let $D_{i+1}(u)$ be the breadth-first search spanning tree of the component from $\cC$ containing $v$, and continue exploring. 
If $v$ {\em does} lie within $M_n(d)$, then write $\alpha(u)=v$ and stop. 

Write $\cT(u)$ for the tree built by \zpu, and write $m(u)$ for the number of components of $K_n^z$ explored by \zpu before it stops; so, $\cT(u)$ is composed of $D_1(u),\ldots,D_{m(u)}(u)$, plus the single vertex $\alpha(u)$, which is the 
unique vertex belonging to both $\cT(u)$ and to $M_n(d)$. 
Note that if the components of $K_n^z$ spanned by $D_1(u),\ldots,D_{m(u)}(u)$ happen to be trees (we will shortly see that this occurs whp), then $\cT(u)$ is precisely the restriction of $M_n$ to $V(\cT(u))$. 
Note also that the trees $(\cT(u),u \not\in V(M_n(d)))$ need not be disjoint; for example, if $v \in V(\cT(u))$ then $\cT(u)$ shares at least the vertices $v$ and $\alpha(v)=\alpha(u)$ with $\cT(v)$. 

Let $\tau_u=1+\sum_{i=1}^{m(u)} |D_i(u)|$, and let $\lambda_u=m(u)+\sum_{i=1}^{m(u)} \diam(D_i(u))$, so $\tau_u=|\cT(u)|$ and $\diam(\cT(u)) \le \lambda_u$. Writing $\gamma=z-1$ as before, 
we will next show that $\Ehat{\tau_u}$ and $\Ehat{\lambda_u}$ are of orders $\gamma^{-2}$ and $\gamma^{-1}$, respectively. 

The argument of this paragraph is similar to the one appearing just after the statement of Proposition~\ref{prop:abgw}. 
Each time \zpu adds an edge not lying within a component of $\cC$, the {\em vertex} that is added is equally likely to be any vertex from an unexplored component of $\cC$ or to be any vertex from $\{v_i, g < i \le d\}$. (It also may be a vertex of $M_n(g)$, but this is less likely since, as noted earlier, $\cF_{n}(d,z)$ provides ``stronger lower bounds'' on the weights of edges connecting $M_n(g)$ with the rest of the graph.) Now fix $\ell \ge 1$ and condition that $m(u) \ge \ell$, and let $v$ be the first vertex added by \zpu after fully exploring $D_1(u),\ldots,D_\ell(u)$. Then
\begin{align*}
\phat{v \in V(M_n(d))~|~m(u) \ge \ell,D_1(u),\ldots,D_\ell(u)} 	& \ge \frac{d-g}{n-d-\sum_{i=1}^\ell |D_\ell(u)|} \I{E_{n,z}} \\
											& \ge (1-\eps) \theta(z)  \I{E_{n,z}}.  
\end{align*}
Since the right-hand side does not depend on $D_1(u),\ldots,D_\ell(u)$, by averaging we thus have 
\begin{equation}\label{eq:hittndlower}
\phat{v \in V(M_n(d))~|~m(u) \ge \ell}  \ge (1-\eps) \theta(z)  \I{E_{n,z}}. 
\end{equation}

Given that $m(u) > \ell$ (i.e., that $v \not\in V(M_n(d))$), the graph $K_n^z[\{1,\ldots,n\}\setminus (V(M_{n}(d)) \cup \bigcup_{i=1}^\ell D_i(u))]$ is stochastically dominated by $K_{n-(d-g)}^z$, and $v$ is a uniformly random vertex of this graph. Assuming $z < 3/2$, say, on the event $E_{n,z}$, uniformly in $\eps > 0$ sufficiently small we have 
\begin{align}
(n-(d-g))(1-e^{-z/n}) 	& \le n(1-\theta(z)(1-\eps))\frac{(1+o_n(1))z}{n} \nonumber\\
				& \le (1+o_n(1))  z (1-\theta(z)+\eps) \nonumber\\
				& < z^* + 2\eps\, , \label{eq:gnzduality}
\end{align}
the final inequality holding for $n$ sufficiently large. It follows from standard results about subcritical random graphs (see, e.g., \cite{bollobas01random} Corollary 5.24) that for $n$ sufficiently large, 
\begin{equation}\label{eq:ptreelb}
\phat{D_{\ell+1}(u)~\mbox{is a tree}~|~m(u) > \ell} \ge (1-n^{-1/2}) \I{E_{n,z}}. 
\end{equation}
Also, using (\ref{eq:gnzduality}), it is straightforward to check that for $\eps > 0$ sufficiently small and $n$ sufficiently large, on $E_{n,z}$ we have $\mathrm{Bin}(n-(d-g),(1-e^{-z/n})) \pst \mathrm{Poisson}(z^*+3\eps)$, where $\pst$ denotes stochastic domination.\footnote{For any fixed $x > 0$ and $\eps > 0$, for all $n$ sufficiently large, $\mathrm{Bin}(n,x/n) \preceq_{\mathrm{st}} \mathrm{Poisson}(x+\eps)$.} 
By considering the breadth-first construction of $D_{\ell+1}(u)$, it follows that {\em given} that $m(u) > \ell$ and that $D_{\ell+1}(u)$ is a tree, $D_{\ell+1}(u)$ is stochastically dominated by $\PGW(z^*+3\eps)$.\footnote{To carefully verify this, one should use that conditional on the vertex set of $D_{\ell+1}(u)$, the event that  $D_{\ell+1}(u)$ is a tree is a decreasing event; as this is rather standard and distracts from the flow of the argument, we omit the details.}

Next, (\ref{eq:hittndlower}) implies that $\phat{m(u) > \ell~|~m(u) \ge \ell} \le (1-(1-\eps)\theta(z))\I{E_{n,z}}$, and so on $E_{n,z}$, $m(u)$ is stochastically dominated by a Geometric$((1-\eps)\theta(z))$ random variable. It follows that (still conditional on $\cF_{n}(d,z)$), on the event $E_{n,z}$ we have 
\[
\tau_u \preceq_{\mathrm{st}} \sum_{i=1}^G |P_i|,
\]
where $G$ is Geometric$((1-\eps)\theta(z))$ and, independently of $G$, the $(P_i,i \ge 1)$ are iid $\PGW(z^*+3\eps)$. By Wald's identity and (\ref{eq:meanvar}), we then obtain that 
\begin{equation}\label{eq:tauupper}
\Ehat{\tau_u} \le \I{E_{n,z}}\cdot \frac{1}{(1-\eps)\theta(z)} \cdot \frac{1}{1-z^*-3\eps} \le \I{E_{n,z}}\cdot \frac{1}{\gamma^2}\, ,
\end{equation}
for $\eps > 0$ small enough. (Here we are still writing $\gamma=z-1$, and use that $\theta(z) \sim 2(z-1)$ and that $z-1 \sim 1-z^*$, both as $z \downarrow 1$, in the last inequality.)
Also, by (\ref{eq:ptreelb}) and the stochastic bound on $m(u)$, we easily obtain that 

\begin{align*}
\phat{\mbox{The components of }K_n^z\mbox{ spanned by }D_1(u),\ldots,D_{m(u)}(u)\mbox{ are all trees}} 
& \\
\ge \pran{1-\frac{\theta(z)\cdot\log n}{n^{1/2}}}\cdot \I{E_n,z}\, . &
\end{align*}

We may bound $\Ehat{\lambda_u}$ in a similar fashion to $\Ehat{\tau_u}$; for this we use that for $\eps$ sufficiently small, $z^*+3\eps < 1$. The diameter of a $\PGW(1)$ tree has finite expectation (this is standard, but also follows easily from (\ref{eq:pgwlambdabound}) and \refT{thm:treeheight}), and we obtain 
that on $E_{n,z}$, the random variable $\lambda_u$ is dominated by the sum of a Geometric$((1-\eps)\theta(z))$ number of iid random variables with some finite expectation $F$. It follows that
\begin{equation}\label{eq:lambdaupper}
\Ehat{\lambda_u} \le \I{E_{n,z}} \cdot \frac{F}{(1-\eps)\theta(z)} < \I{E_{n,z}}\cdot \frac{F}{\gamma}\, ,
\end{equation}
for $\eps > 0$ sufficiently small and $z$ sufficiently close to $1$. 

By reprising the above argument, we can obtain a stochastic lower bound on $\tau_u$ that is of roughly the same form. First, given $\ell \le \tau_u$, by {\em the tree built by the first $\ell$ steps of \zpu,} we mean the subtree of $\cT(u)$ consisting of the first $\ell$ vertices added by \zpu. The ``order of addition of vertices'' is well-defined since we explore the subtrees $D_1,\ldots,D_{m(u)}$ in order, and within each subtree the vertices are explored in breadth-first search order. More precisely, we may think of each step of \zpu\ as consisting of the breadth-first-search exploration of a single node, plus possibly the connection of a new component from $\cC$ (the latter occurring each time the current BFS exploration concludes). 

Let $\tau^-_u = \min(\tau_j,\lfloor \eps n\rfloor)$. 
The tree built by the first $\tau^-_u$ steps of \zpu\ is a subtree of\, $\cT(u)$, and is equal to $\cT(u)$ precisely if $\tau_u \le \lfloor \eps n \rfloor$. 
For $\ell < \tau_u^-$, at step $\ell$ of \zpu, some tree $D_i$ is partially built. The number of new nodes added to $D_i$ in the BFS exploration at step $\ell$ has distribution $\mathrm{Bin}(n-d-\ell,1-e^{-z/n})$. 
For $\eps>0$ small, on $E_{n,z}$ we have $n-d-\ell > n(1-2\theta(z))$. Since also $1-e^{-z/n} \ge (1-\eps) z/n$ for $n$ large, on $E_{n,z}$ the number of new nodes thus stochastically dominates
\begin{equation}\label{eq:binstlb}
\mathrm{Bin}\pran{n(1-2\theta(z)),\frac{(1-\eps)z}{n}} \sust \mathrm{Poisson}(z(1-3\theta(z)))\, ,
\end{equation}
the last stochastic inequality again holding for $n$ large. 
Furthermore, for fixed $i \ge 1$, on the event that $m(u) \ge i$ we have 
\begin{align}
\phat{m(u) > i~\left|~m(u) \ge i,\sum_{j=1}^i |D_j| < \lfloor \eps n \rfloor\right.} 
& > \I{E_{n,z}}\cdot \frac{n-(1+\eps) \theta(z) n - \eps n}{n} \nonumber\\
& \ge \I{E_{n,z}} (1-2\theta(z))\, . \label{eq:mulb}
\end{align}
Together, (\ref{eq:binstlb}) and (\ref{eq:mulb}) imply that on $E_{n,z}$, 
\begin{equation}\label{eq:streltauminus}
\tau^-_u \sust \min(\lfloor \eps n\rfloor,\sum_{i=1}^{G^-}|P_i^-|)\, ,
\end{equation}
where $G^-$ is Geometric$(1-2\theta(z))$ and, independent of $G^-$, the $(P_i^-,i \ge 1)$ are iid 
$\pgw(1-2\theta(z))$. 

For $\gamma = z-1$ sufficiently small, $3\gamma < 2\theta(z) < 4\gamma$, so 
\[
\p{G^- > \frac{1}{50\gamma}} > \frac{9}{10}\, .
\]
Also, by (\ref{eq:meanvar}), 
\[
\E{|P_1^-|} \ge \frac{1}{4\gamma},\quad \E{|P_1^-|^2} \le \frac{1}{2\gamma^3}, 
\]
so by a routine application of the Paley-Zygmund inequality, 
\[
\p{\sum_{i=1}^{\lceil 1/50\gamma\rceil} |P_i^-| \ge \frac{1}{400\gamma^2}} \ge \frac{1}{100}\, .
\]
By the independence of $G^-$ and the $P_i^-$, it follows that 
\[
\p{\sum_{i=1}^{G^-} |P_i^-| \ge \frac{1}{400\gamma^2}} > \frac{9}{1000}. 
\]
We also have $\p{\sum_{i=1}^{G^-} |P_i^-| \ge \lfloor \eps n \rfloor} \to 0$ as $n \to \infty$, and so 
by the stochastic relation (\ref{eq:streltauminus}) we obtain 
\[
\phat{\tau^-_u \ge \frac{1}{400\gamma^2}} \ge \I{E_{n,z}} \pran{\frac{9}{1000}-o_n(1)} \ge \I{E_{n,z}} \cdot \frac{1}{125}\, ,
\]
for $n$ sufficiently large. Since $\tau^-_u \le \tau_u$ almost surely, combined with (\ref{eq:tauupper}) and (\ref{eq:lambdaupper}) this implies that there is $C>1$ such that
\[
\phat{\tau_u \in \left[ \frac{1}{C\gamma^2},\frac{C}{\gamma^2}\right],\lambda_u \le \frac{C}{\gamma}} \ge\I{E_{n,z}} \cdot \frac{1}{C}\, ;
\]
this bound holds uniformly over all $z$ with $z-1$ sufficiently small, over all $\eps$ with $0 < \eps < \eps_0(z)$, and for all $n$ greater than some fixed $n_0=n_0(z,\eps)$. 

Fix $v_\ell \in V(M_n(d))$ with $g < \ell \le d$, and let 
\[
N_\ell = \#\left\{u \not\in V(M_n(d)): \alpha(u)=v_\ell, \tau_u \in \left[ \frac{1}{C\gamma^2},\frac{C}{\gamma^2}\right],\lambda_u \le \frac{C}{\gamma}\right\}\, .
\]
For any $u \not\in V(M_{n}(d))$, $\alpha(u) \in \{v_1,\ldots,v_d\}$.  
By (\ref{stochasticordering}) and by symmetry, we have 
\[
\phat{\alpha(u)=v_\ell~|~\tau_u \in \left[ \frac{1}{C\gamma^2},\frac{C}{\gamma^2}\right],\lambda_u \le \frac{C}{\gamma}} \ge \I{E_{n,z}} \cdot \frac{1}{d}\, ,
\]
and we similarly have $\phat{\alpha(u)=v_\ell} \le \I{E_{n,z}}/(d-g) \le \I{E_{n,z}}/((1-\eps)\theta(n))$. 
On $E_{n,z}$, for $\eps > 0$ sufficiently small, $n-d \ge n(1-2\theta(z))$ and $d \le (1+\eps)\theta(z)$, 
so 
\begin{equation}\label{eq:nklower}
\Ehat{N_\ell} \ge \I{E_{n,z}} \cdot \frac{n-d}{d} \ge \I{E_{n,z}}\cdot \frac{1-2\theta(z)}{(1+\eps)\theta(z)}\, .
\end{equation}
Now fix an $\cF_{n}(d,z)$-measurable set $S \subset \{v_{g+1},\ldots,v_d\}$, and write 
$N_S := \sum_{i: v_i \in S} N_i$. Note that $N_{i}$ counts a subset of the vertices in $U_{n,i}^{C/\gamma}=U_{n,i}^{C/(z-1)}$, so to prove Proposition~\ref{prop:lowerboundkey} it suffices to prove that for any $A > 1$, 
\begin{equation}\label{eq:nstoprove}
\p{N_S \ge \frac{A}{\theta(z)^3}} \ge \pran{\p{|S| \ge \frac{3A}{\theta(z)^2}}-o_n(1)}\pran{1-\frac{3C}{A} - 4\theta(z)}\, .
\end{equation}
To prove such a lower bound, we shall use the second moment method; for this we require an upper bound on expectations of the form $\Ehat{N_j N_l}$, for $g < j \le l \le d$, and we now turn to proving such a bound.

The principal contribution comes from the case $j=l$; in this case we seek an upper bound on 
\begin{align*}
\Ehat{N_\ell^2} 	& = \sum_{u,u'\not\in V(M_{n}(d))} \phat{u \in N_\ell,u' \in N_\ell} 
\end{align*}
We bound the last probability above by considering whether or not $u'$ lies within $\cT(u)$. 
If $u \in N_\ell$ then $|\cT(u)| \le C/\gamma^2$; by the symmetry of the set $\{w: w \not\in V(M_n(d))\}$, we thus have 
\begin{align}
\phat{u \in N_\ell,u' \in V(\cT(u))} & = \phat{u' \in V(\cT(u))|u \in N_\ell}\cdot \phat{u \in N_\ell}  \nonumber\\
& \le \frac{C/\gamma^2}{(1-\eps)\theta(z)n}\cdot \phat{u \in N_\ell}\, \nonumber\\
& \le \I{E_{n,z}}\frac{C}{(\gamma(1-\eps)\theta(z)n)^2} .\label{eq:twoinonetree}
\end{align}
We next bound $\phat{u \in N_\ell,u' \in N_\ell, u' \not\in V(\cT(u))}$. If $u \in N_\ell$ and $u'\not\in V(\cT(u))$, then in order to have $u' \in N_\ell$, the $z$-{\it Prim}$(u')$ procedure must at some point add a vertex of $\cT(u)$. To bound the latter probability, consider a modification of $z$-{\it Prim}$(u')$ which stops the first time {\em either} a vertex of $M_n(d)$ {\em or} a vertex of $\cT(u)$ is added. Write $\beta(u')$ for the last vertex added by the modified procedure; then 
\[
\phat{u \in N_\ell,u' \in N_\ell, u' \not\in V(\cT(u))} = \phat{u \in N_\ell, u' \not\in V(\cT(u)),\beta(u') \in V(\cT(u))}\, .
\]
We have already conditioned on $M_{n}(d)$ (more precisely, on $\cF_{n}(d,z)$); we now additionally 
condition on $\cT(u)$. All edges from $\cT(u)$ to the rest of the graph have weight at least $z$, and it follows by arguing as at (\ref{stochasticordering}) that 
\begin{align*}
& \quad \phat{\left.u' \not\in V(\cT(u)),\beta(u') \in V(\cT(u)),u \in N_\ell~\right|~\cT(u)} \\
\le & \quad \I{u' \not\in V(\cT(u))} \I{u \in N_\ell}\I{E_{n,z}} \cdot \frac{|\cT(u)|}{d-g+|\cT(u)|}\, .
\end{align*}
When $u \in N_\ell$ we have $|\cT(u)| \le C/\gamma^2$, and on $E_{n,z}$ we have $d \ge (1-\eps)\theta(z)\cdot n$. It follows that 
\[
\phat{u' \not\in V(\cT(u)),\beta(u') \in V(\cT(u)),u \in N_\ell~|~\cT(u)}
\le 
\I{u\in N_\ell}\cdot \I{E_{n,z}}\cdot\frac{C/\gamma^2}{(1-\eps)\theta(z)\cdot n}\, .
\]
The only term on the right that depends on $\cT(u)$ is $\I{u \in N_\ell}$; by averaging over $\cT(u)$ we thus obtain
\begin{align}
\phat{u \in N_\ell,u' \in N_\ell, u' \not\in V(\cT(u))} & \le \phat{u \in N_\ell} \cdot \frac{C/\gamma}{\gamma^2(1-\eps)\theta(z)n}\, \\
& \le \I{E_{n,z}}\frac{C}{(\gamma(1-\eps)\theta(z)n)^2}\, .
\end{align}
Combined with (\ref{eq:twoinonetree}) we thus obtain the bound 
\[
\phat{u \in N_\ell,u' \in N_\ell} \le \I{E_{n,z}}\cdot \frac{2C}{(\gamma(1-\eps)\theta(z)n)^2}\, ,
\]
and summing over pairs $u,u' \not\in V(M_n(d))$ (there are less than $n^2$ such pairs) yields
\begin{equation}\label{eq:nksquarebd}
\Ehat{N_\ell^2} \le \I{E_{n,z}} \cdot \frac{2C}{(\gamma(1-\eps)\theta(z))^2}
\end{equation}
In the case $j \ne \ell$, the same style of argument works with minor modifications, which we only briefly sketch. 
In order to have $u \in N_j$ and $u' \in N_\ell$ we can not have $u' \in V(\cT(u))$, and must have 
$\alpha(u')=v_\ell$. The symmetry of the nodes $\{v_i, g < i \le d\}$ is not broken by the knowledge that $u \in N_j$, so we have 
\[
\phat{\alpha(u')=v_\ell~|~u \in N_j} \le \I{E_{n,z}} \cdot \frac{1}{(1-\eps)\theta(z)n)}\, ;
\] 
we thus obtain the bound 
\[
\phat{u \in N_j,u' \in N_\ell} \le \I{E_{n,z}} \frac{1}{((1-\eps) \theta(z) n)^2}\, ,
\]
and so 
\[
\Ehat{N_jN_\ell} \le \I{E_{n,z}} \frac{1}{((1-\eps)\theta(z))^2}\, .
\]
Combining the preceding inequality with (\ref{eq:nksquarebd}), it follows that for any $\cF_{n}(d,z)$-measurable set $S \subset \{v_i, g < i \le d\}$, writing $N_S = \sum_{\{i: v_i \in S\}} N_i$, we have 
\[
\Ehat{N_S^2} 
\le \I{E_{n,z}} \frac{(|S|^2 + 2C|S|/\gamma^2)}{(1-\eps)^2\theta(z)^2}\, .
\]
By (\ref{eq:nklower}), we also have 
\[
\Ehat{N_S} 
\ge \I{E_{n,z}}\frac{(1-2\theta(z)))|S|}{(1+\eps)\theta(z)}.
\]
By the preceding inequalities and the conditional Chebyshev inequality (\cite{durrett10prob}, p.194), for any $t > 0$ we have 
\begin{align*}
\phat{|N_S-\Ehat{N_S}| > t} & \le \frac{1}{t^2} \pran{\Ehat{N_S^2}-\Ehat{N_S}^2} \\
					& \le  \frac{1}{t^2} \frac{|S|^2}{\theta(z)^2}
					\pran{\frac{1+2C/(\gamma^2|S|)}{(1-\eps)^2} - \frac{(1-2\theta(z))^2}{(1+\eps)^2}}
\end{align*}
Recall that for all $z > 1$ we have $\theta(z) \le 2(z-1) = 2\gamma$. 
It follows that for $z-1$ sufficiently small, for any $D > 1$, for $n$ large enough we have 
\begin{align*}
\phat{N_S \ge \frac{|S|}{3\theta(z)}, |S| \ge \frac{DC}{\theta(z)^2}}
& \ge \phat{|N_S-\Ehat{N_S}| \le \frac{|S|}{2\theta(z)}, |S| \ge \frac{DC}{\theta(z)^2}} \\
& \ge \I{|S| \ge DC/\theta(z)^2}\cdot \I{E_{n,z}}\cdot \pran{1-\pran{\frac{1+8/D}{(1-\eps)^2} - \frac{(1-2\theta(z))^2}{(1-\eps)^2}}}.
\end{align*}
By the tower law, we thus obtain 
\begin{align*}
\p{N_S \ge \frac{DC}{3\theta(z)^3}} & \ge \p{N_S \ge \frac{|S|}{3\theta(z)}, |S| \ge \frac{DC}{\theta(z)^2},E_{n,z}} \\
			& = \E{\phat{N_S \ge \frac{|S|}{3\theta(z)}, |S| \ge \frac{DC}{\theta(z)^2}}} \\
			& \ge \p{|S| \ge DC/\theta(z)^2,E_{n,z}} \cdot \pran{1-\pran{\frac{1+8/D}{(1-\eps)^2} - \frac{(1-2\theta(z))^2}{(1-\eps)^2}}}\, .
\end{align*}
Since $\p{E_{n,z}} \to 1$ as $n \to \infty$, and since $\eps > 0$ was arbitrary, it follows that for any $\cF_{n,d}$-measurable $S \subset \{v_j, g < j \le d\}$, 
\[
\p{N_S \ge \frac{DC}{3\theta(z)^3}} \ge \pran{\p{|S|\ge \frac{DM}{\theta(z)^2}} - o_n(1)} \cdot \pran{1-\frac{8}{D} - 4\theta(z)}\, .
\]
Taking $A=D/3$ then establishes (\ref{eq:nstoprove}) and so completes the proof of Proposition~\ref{prop:lowerboundkey}. 

\printnomenclature[2cm]

\end{document}